\newtheorem{theorem}{Theorem}
\newtheorem{proposition}[theorem]{Proposition}%
\newtheorem{problem}{Problem}
\newtheorem{remark}{Remark}%
\newtheorem{assumptions}{Assumption}
\newtheorem{lemma}{Lemma}
\newtheorem{corollary}{Corollary}
\newtheorem{definition}{Definition}%
\newcommand{\Mc}{\mathcal{M}}
\newcommand{\M}{\mathcal{M}}
\newcommand{\N}{\mathbb{N}}
\newcommand{\E}{\mathbb{E}}
\newcommand{\Xc}{\mathcal{X}}
\newcommand{\X}{\mathcal{X}}
\newcommand{\F}{\mathcal{F}}
\newcommand{\G}{\mathcal{G}}
\newcommand{\supp}{\text{spt}}
\newcommand{\dist}{\text{dist}}
\newcommand{\U}{\mathcal{U}}
\newcommand{\veps}{\varepsilon}
\newcommand{\R}{\mathbb{R}}
\newcommand{\Ric}{{\rm Ric}}
\newcommand{\argmin}{{\rm argmin}}
\newcommand{\vol}{{\rm vol}}
\renewcommand{\k}{\kappa}
\newcommand{\nc}{\normalcolor}
\newcommand\restr[2]{{
  \left.\kern-\nulldelimiterspace 
  #1 
  \vphantom{\big|} 
  \right|_{#2} 
  }}
\begin{document}

\title[Continuum Limits of Ollivier's Ricci Curvature on data clouds]{Continuum Limits of Ollivier's Ricci Curvature on data clouds: pointwise consistency and global lower bounds}


\author[1]{\fnm{Nicol\'as} \sur{Garc\'ia Trillos}}

\author[2]{\fnm{Melanie} \sur{Weber}}

\affil[1]{\orgdiv{Department of Statistics}, \orgname{University of Wisconsin-Madison}}

\affil[2]{\orgname{Harvard University}}


\abstract{\noindent Let $\Mc \subseteq \mathbb{R}^d$ denote a low-dimensional manifold and let $\Xc= \{ x_1, \dots, x_n \}$ be a collection of points uniformly sampled from $\Mc$. We study the relationship between the curvature of a random geometric graph built from $\Xc$ and the curvature of the manifold $\Mc$ via continuum limits of Ollivier's discrete Ricci curvature.  We prove pointwise, non-asymptotic consistency results and also show that if $\Mc$ has Ricci curvature bounded from below by a positive constant, then the random geometric graph will inherit this global structural property with high probability. We discuss applications of the global discrete curvature bounds to contraction properties of heat kernels on graphs, as well as implications for manifold learning from data clouds.  In particular, we show that our consistency results allow for estimating the intrinsic curvature of a manifold by first estimating concrete extrinsic quantities.}

\keywords{Ollivier-Ricci curvature, curvature lower bounds, random geometric graphs, discrete-to-continuum consistency, geometric inference, graph-based learning, manifold learning}


\pacs[MSC Classification]{52C99, 53A35, 62G05, 68U05}

\maketitle
\section{Introduction}
The problem of identifying geometric structure in data is central to Machine Learning and Data Science.  A frequently encountered structure is \emph{low-dimensionality}, where high-dimensional data is assumed to lie on or near a low-dimensional manifold (\emph{manifold hypothesis}).  Let $\Xc \subset \mathbb{R}^d$ denote such a data set of size $n$ and $\Mc \subseteq \R^d$ a low-dimensional manifold from which $\Xc$ was sampled.  Given $\Xc$,  but without prior knowledge of $\Mc$, what can we say about the \textit{intrinsic} geometry of $\Mc$? In particular, what can we learn about intrinsic notions of curvature of $\Mc$ from $\Xc$? One of the central goals of this paper is to study this question through the lens of discrete Ricci curvature. Traditionally, curvature has been studied in continuous spaces such as Riemannian manifolds. Several different notions of curvature relate to the local and global geometric properties of manifolds. \emph{Ricci curvature} is a classical concept in Riemannian geometry, which, in particular, determines the volume growth of geodesic balls and which is closely connected to functional inequalities. In this paper, we study a \emph{discrete} notion of Ricci curvature originally defined by Ollivier~\citep{ollivier2009ricci} and its relationship to the classical Ricci curvature on $\M$. 
More specifically, we study the relationship between the curvature of a suitable \emph{random geometric graph} (short: \emph{RGG}) built from $\Xc$ and the curvature of the manifold $\Mc$.  
A RGG $G$ is constructed from a sample $\Xc$ by connecting points with a \textit{distance} of at most $\veps$ with an edge; as we will discuss below, the choice of distance function plays an important role in our results.

In more concrete terms, we explore the following two questions:

\medskip

\begin{problem}\label{prob1}
Can we give \emph{non-asymptotic} error bounds for the \emph{pointwise estimation} of the curvature of $\Mc$ from that of $G$?
\end{problem}

\medskip 

\begin{problem}\label{prob2}
If the manifold $\M$ has Ricci curvature bounded from below by a given constant, will a RGG inherit this global structural property with high probability? What are some consequences of the resulting discrete curvature lower bounds? \nc
\end{problem}

\medskip

We will discuss both questions in two different settings.  In the first, which is of mostly theoretical value, we assume to have access to the pairwise \textit{geodesic} distances of points in $\Xc$, i.e.,  in $G$ two points are connected by an edge if they are within distance $\veps$ from each other along the manifold.  In the second setting,  we have no access to geodesic distances but we assume, instead, to have access to sufficiently accurate data-driven approximations thereof. In studying these two problems we will be able to provide theoretical insights into the relationship between discrete and continuous Ricci curvature and deliver consistent continuum limits of Ollivier's Ricci curvature on data clouds. Recall that a positive global lower bound on the Ricci curvature has several important implications for the manifold's geometry, including a bound on the diameter of complete manifolds (Bonnet-Myers~\citep{bonnet-myers}), as well as consequences for the coupling of random walks, which we will discuss below. We will explore some novel implications of having discrete curvature lower bounds on the behavior of graph Laplacians built over $G$. For example, the results presented in section \ref{sec:LaplaciansandRegression} are novel in the literature of graph Laplacians and do not follow from existing discrete-to-continuum consistency results. 



\subsection{Outline}

In order to precisely state our main results, we first present some background material. In particular, in section \ref{sec:DiffGeometry} we recall important concepts from differential geometry, including notions of Ricci curvature, parallel transport, and the second fundamental form of an embedded manifold; all these notions will be used in the sequel. Then, in section \ref{sec:Ollivier}, we discuss the notion of Ollivier's Ricci curvature for triplets $(\U, d, \mathbf{m})$ consisting of a metric space $(\U,d)$ and a Markov kernel $\mathbf{m}$ over $\U$; we will discuss a special setting where $\U$ is a Riemannian manifold $\M$ and discuss the connection between the induced Ollivier-Ricci curvature and the classical Ricci curvature discussed in section \ref{sec:DiffGeometry}. In section \ref{sec:DiscreteOllivier} we introduce the RGGs $G$ over $\X$ that we will study throughout the paper and define associated discrete Ollivier-Ricci curvatures up to the choice of a metric $d_G$ over $\X$. The metric $d_G$ will be explicitly defined in section \ref{sec:GeodesicDistancesX}, right after discussing the approximation of geodesic distances over $\M$ from data at the beginning of section \ref{sec:APproximationGeodesics}.  

In section \ref{sec:MainResults} we present our main theoretical results: in section \ref{sec:MainPointwiseState} our pointwise consistency results (Theorems \ref{thm:consistency-non-asymp} and \ref{thm:consistency-non-asymp-geo}), and in section \ref{sec:MainGlobal} our global lower bounds (Theorems \ref{thm:GlobalBounds1} and \ref{thm:GlobalBounds2}). In section \ref{sec:MainNumerical} we illustrate the recovery of Ricci curvature from data with a simple numerical experiment. 

In section \ref{sec:Literature} we discuss some related literature. 

In section \ref{sec:NonAsymptoticGuarantees} we present the proofs of our main results.

In section \ref{sec:Applications} we present a brief discussion of some applications of our main theoretical results. One such application is discussed in section \ref{sec:LaplaciansandRegression}, where we study the Lipschitz contractivity of graph heat kernels over data clouds sampled from manifolds with positive curvature. In section \ref{sec:ManifoldLearning} we discuss some of the avenues that our main results may open up in the field of manifold learning.

We wrap up the paper in section \ref{sec:Conclusions} with some conclusions and some discussion on directions for future research.



\section{Background and Notation}
\label{sec:background}
\subsection{Notions from Differential Geometry}
\label{sec:DiffGeometry}
We start by recalling some basic definitions and tools from differential geometry that we have collected from Chapters 0-4 and 6 in \citep{do1992riemannian}. This gives us the opportunity to introduce some notation that we use in the sequel. 

An $m$-dimensional \emph{manifold} $\M$ is a locally Euclidean space of dimension $m$ with a differentiable structure. We use $T_x \Mc$ to denote the tangent plane at the point $x \in \Mc$. Throughout the paper we will only consider smooth, connected, compact \emph{Riemannian} manifolds without boundary. These are manifolds endowed with a smoothly varying inner product structure $g=\{g_x\}_{x \in \M}$ defined over their tangent planes. The geodesic distance $d_\M$ between two points $x,y \in \M$ is defined according to
\[ d_\M(x,y) = \inf _{\gamma: [0,1] \rightarrow \M  } \int_{0}^1 
\sqrt{ g_{\gamma(t)} (\dot{\gamma}(t), \dot{\gamma}(t)) } dt,\]
where the inf ranges over all smooth paths connecting $x$ to $y$. Important notions in Riemannian geometry such as geodesic curves (in particular length minimizers) and curvature are defined in terms of \textit{connections} or \textit{covariant derivatives}. Informally, given a smooth curve $\gamma$ on $\M$, the covariant derivative $\nabla_{\dot{\gamma}}  $ is an operator, satisfying some linearity and Leibniz product rule properties, mapping vector fields along $\gamma$ into vector fields along $\gamma$. Among the multiple choices of connection that can be defined over a manifold, we will work with the Levi-Civita connection, which satisfies some additional compatibility conditions with the Riemannian structure of the manifold; see details in Chapter 2 in \citep{do1992riemannian}.

In general, a geodesic is a smooth path $\gamma:[0,1] \rightarrow \M$ satisfying $\nabla_{\dot \gamma} \dot{\gamma}=0$. It is possible to show that for every $x\in \M$ and every $v \in T_x\M$ there exists a unique geodesic $\gamma$ satisfying $\gamma(0)=x$ and $\dot{\gamma}(0)=v$. We may use this fact to define the \textit{exponential map} $ \exp_x: T_x \M \rightarrow \M$, which maps $v$ to $\gamma(1)$ for $\gamma$ the geodesic starting at $x$ in the direction $v$. It can be shown that there exists a real number $\iota_\M>0$, known as $\M$'s \textit{injectivity radius}, for which $\exp_{x} : B(0,\veps) \subseteq T_x\M \rightarrow B_\M(x,\veps)$ is a diffeomorphism for all $x \in \M$ and all $\veps < \iota_\M$; here and in the remainder, we use $B_\M(x,\veps)$ to denote the ball of radius $\veps$ around $x$ when $\M$ is endowed with $d_\M$ and $B(0,\veps)$ is the $m$-dimensional Euclidean ball of radius $\veps$.  The inverse of $\exp_x$, the \emph{logarithmic map}, will be denoted by 
$\log_x: B_\M(x, \veps ) \subseteq \M \rightarrow B(0, \veps) \subseteq T_x  \M$. For any two points $x,y$ with $d_\M(x,y) < \iota_\M$, the unique minimizing geodesic between $x$ and $y$ (i.e., a minimizer in the definition of $d_\M(x,y)$) is given by $\gamma:t\in [0,1] \mapsto \exp_{x}(tv)$ where $v = \log_x(y)$. This minimizing geodesic can be reparameterized so that it is unit speed (i.e., $\dot{\gamma}(0)$ has norm one) in which case $\gamma$ maps the interval $[0, d_\M(x,y)]$ into $\M$. From now on we will refer to this curve as the unit speed geodesic between $x$ and $y$. 


With the notion of Levi-Civita connection we can also introduce the concept of \textit{parallel transport}, an important construction that allows us to compare tangent vectors at different points on $\M$. Precisely, let $\gamma:[0,a] \rightarrow \M$ be a smooth curve on $\M$ and let $x=\gamma(0)$ and $y=\gamma(a)$. Given $v \in T_x \M$, we define $V(t) \in T_{\gamma(t)} \M$, the parallel transport of $v$ along $\gamma$, to be the (unique) solution to the equation $ \nabla_{\dot{\gamma}(t)} V = 0 $ with initial condition $V(0)=v$. We will be particularly interested in the case where $\gamma$ is the unit speed geodesic between $x$ and $y$ (for points that are sufficiently close to each other) and we will denote by $P_{xy}$ the map $P_{xy}: T_x \M \rightarrow  T_y \M $ defined as $v \in T_x \M \mapsto V(d_\M(x,y)) \in T_y \M $. We will say that $P_{xy}v$ is the parallel transport of $v$ along the geodesic connecting $x$ and $y$.
\nc

To characterize the curvature of the manifold $\M$ in a neighborhood of a point $x \in \Mc$ we can use the notion of \emph{Ricci curvature}. Formally, let $v \in T_x \Mc$ denote a unit vector and $\lbrace u_1, \dots, u_{m-1}, v \rbrace$ an orthonormal basis for $T_x\M$. We define the \textit{Ricci curvature} at $x$ along the direction $v$ as
\begin{equation}\label{eq:ric-curv}
    \Ric_x(v) := \frac{1}{m-1} \sum_{i=1}^{m-1} g({R(v,u_i)v},{u_i}) \; ,
\end{equation}
where $R(u,v)w := \nabla_u \nabla_v w - \nabla_v \nabla_u w - \nabla_{[u,v]}w$, for $[u,v]$ the Lie Bracket between $u$ and $v$. Furthermore, we can globally characterize $\Mc$'s geometry via \emph{sectional curvature}, which is given by
\begin{equation}\label{eq:sec-curv}
    K (u,v) := K_x(u,v) = \frac{g(R(u,v)u , {v} )}{|u|^2 |v|^2- (g(u,v))^2}
\end{equation}
for $u,v \in T_x \Mc$ and $x \in \Mc$; here we use the notation $|u|^2=g(u,u)$. We remark that $K$ is invariant by non-zero rescalings of the vectors $u,v$.

Let $x,y \in \M$ and let $\veps>0$ be smaller than the injectivity radius $\imath_\M$. We define $\mathcal{P}: B_\M(x,\veps) \rightarrow B_\M(y, \veps)$ the map given by
\begin{equation}
\mathcal{P}(\tilde x) =  \exp_y(P_{xy}( \log_x(\tilde x ) )).
\label{eqn:ParallelF}
\end{equation}
That is, $\tilde x$ is mapped to $x$'s tangent plane and then transported to $y$'s tangent plane along the geodesic connecting $x$ and $y$ (unique if we assume $d_\M(x,y) < \iota_\M$) to finally be mapped to $B_\M(y,\veps)$ via the exponential map at $y$. One important property of the diffeomorphism $\mathcal{P}$ that we use in the sequel, originally due to Levi-Civita, is that if we form the quadrilateral illustrated in Figure \ref{fig:LeviCivita}, then the distance between $\tilde x$ and $\tilde y$ can be precisely characterized,  up to correction terms of order 4, by the distance between $x$ and $y$ and $\M$'s sectional curvature at $x$. More precisely, we have the following result. 

\begin{proposition}[cf Proposition 6 in \citep{ollivier2009ricci}] 
    Let $\veps>0$ be a number smaller than $\iota_\M$, the injectivity radius of $\M$. Let $x, y \in \M$ be such that $d_\M(x,y) < \iota_\M$ and let $\tilde x \in B_\M(x,\veps)$ and $\tilde y := \mathcal{P}(\tilde x)$ with $\mathcal{P}$ as in \eqref{eqn:ParallelF}. Then
    \[  d_\M(\tilde x, \tilde y) = d_\M(x,y) \left( 1- \frac{(d_\M(x,\tilde x))^2}{2}(  K(v,w)  + O(\veps + d_\M(x,y) )   ) \right), \]
    where $v = \frac{\log_x(y)}{|\log_x(y)|} $ and $w= {\log_x(\tilde x)} $.
\end{proposition}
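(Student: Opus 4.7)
The plan is a variational argument based on the first and second variation formulas for arclength, applied to a one-parameter family of geodesics whose endpoints slide along the two ``vertical sides'' of the Levi-Civita quadrilateral. For $s \in [0,1]$, set $\tilde{x}(s) := \exp_x(sw)$ and $\tilde{y}(s) := \mathcal{P}(\tilde{x}(s)) = \exp_y(s P_{xy} w)$, and let $\gamma_s$ denote the unique unit-speed minimizing geodesic from $\tilde{x}(s)$ to $\tilde{y}(s)$. The assumptions $\veps, d_\M(x,y) < \iota_\M$ together with smoothness of $\exp$ guarantee that $\gamma_s$ exists and depends smoothly on $s$ in an open neighborhood of $[0,1]$. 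Define $L(s) := d_\M(\tilde{x}(s), \tilde{y}(s))$, so that $L(0) = d_\M(x,y)$ and $L(1) = d_\M(\tilde{x}, \tilde{y})$. The claim reduces to a second-order Taylor expansion of $L$ around $s = 0$.

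I would first verify $L'(0) = 0$. By the first variation formula,
\[ L'(0) = \langle \dot\gamma_0(L(0)), P_{xy} w \rangle - \langle \dot\gamma_0(0), w \rangle = \langle P_{xy} v, P_{xy} w\rangle - \langle v, w\rangle = 0, \]
using that $\dot\gamma_0$ is parallel along $\gamma_0$ with initial value $v$, and that parallel transport preserves the Riemannian inner product. Next, the second variation formula: because the endpoint curves $s \mapsto \tilde{x}(s), \tilde{y}(s)$ are themselves geodesics, their acceleration contributions vanish, leaving
\[ L''(0) = \int_0^{L(0)} \left[\, |(J^\perp)'|^2 - \langle R(T, J^\perp) T, J^\perp \rangle \,\right] du, \]
where $T := \dot\gamma_0$ and $J$ is the variation field, a Jacobi field along $\gamma_0$ with boundary values $J(0) = w$ and $J(L(0)) = P_{xy} w$.

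The main technical step is estimating $J$. Writing $J(u) = P_{0,u}\,\xi(u)$ in a parallel frame along $\gamma_0$, the Jacobi equation becomes a second-order linear ODE whose coefficient operator is bounded uniformly on $\M$ by compactness. Since $\xi(0) = \xi(L(0)) = w$, iterating the integral form of the ODE yields, uniformly in $u \in [0, L(0)]$,
\[ \xi(u) = w + O\bigl(L(0)^2 \, |w|\bigr), \qquad \xi'(u) = O\bigl(L(0)\, |w|\bigr). \]
Substituting back, the kinetic term contributes at most $O(L(0)^3 |w|^2)$ after integration, while the curvature integrand equals $K(v,w)\,|w^\perp|^2 + O(L(0) |w|^2)$ to leading order (using smoothness of $R$ along $\gamma_0$ and scale-invariance of the sectional curvature in the plane spanned by $v$ and $w$). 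Therefore
\[ L''(0) = -\, L(0)\, K(v,w)\, |w^\perp|^2 + O\bigl(L(0)^2\, |w|^2\bigr). \]

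Finally, a Taylor expansion gives $L(1) = L(0) + \tfrac{1}{2} L''(0) + O(L(0)\, |w|^3)$, where the bound on $L'''$ on $[0,1]$ comes from smoothness of $\M$ and compactness of the parameter set. Substituting and using $|w| < \veps$ yields
\[ d_\M(\tilde x, \tilde y) = d_\M(x,y)\left(1 - \tfrac{|w|^2}{2}\, K(v,w) + |w|^2\, O(\veps + d_\M(x,y))\right), \]
which matches the stated formula after absorbing the uniformly bounded projection factor $|w^\perp|^2/|w|^2 \le 1$ and the higher-order remainders into the $O(\cdot)$ term. The main obstacle is keeping all remainder constants uniform: they must depend only on intrinsic bounds of $\M$ rather than on the specific base pair $(x,y)$ or on $\veps$ in its allowed range. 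This follows from compactness of $\M$ and uniform bounds on the curvature tensor and its covariant derivatives, but requires careful bookkeeping throughout the Jacobi and Taylor estimates.
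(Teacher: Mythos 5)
The paper offers no proof of this proposition (it simply cites \cite{ollivier2009ricci}), so there is nothing internal to compare against, but your variational route (first and second variation of arclength along the sliding Levi--Civita quadrilateral, with Jacobi field estimates) is a clean and legitimate way to attack it, structurally parallel to Ollivier's original argument. The first-variation step, the vanishing of the endpoint boundary terms (because $s\mapsto \exp_x(sw)$ and $s\mapsto \exp_y(sP_{xy}w)$ are geodesics), the Jacobi--field estimate $\xi(u)=w+O(L(0)^2|w|)$, and the resulting expansion $L''(0) = -L(0)\,K(v,w)\,|w^\perp|^2 + O(L(0)^2|w|^2)$ are all correct.

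The gap is in the final line. You derive the coefficient $|w^\perp|^2 K(v,w) = \langle R(v,w)v,w\rangle$, whereas the stated formula carries $(d_\M(x,\tilde x))^2 K(v,w) = |w|^2 K(v,w)$, and you claim the two can be reconciled by ``absorbing the uniformly bounded projection factor $|w^\perp|^2/|w|^2\le 1$ into the $O(\cdot)$ term.'' That step does not work: the ratio $|w^\perp|^2/|w|^2 = 1 - \langle v,w\rangle^2/|w|^2$ is $O(1)$, not $1+O(\veps + d_\M(x,y))$, so the discrepancy $K(v,w)\langle v,w\rangle^2/2$ is of leading order $|w|^2$ and cannot be hidden in the error term. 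The degenerate case makes this vivid: if $w$ is parallel to $v$, the quadrilateral collapses to sliding both endpoints along the same geodesic, so $d_\M(\tilde x,\tilde y) = d_\M(x,y)$ exactly; your $|w^\perp|^2$--coefficient correctly vanishes, while the quantity $|w|^2 K(v,w)$ is not even defined. The issue is an imprecision in the proposition as written (inherited from the way Ollivier's Proposition 6 is often quoted with $w$ implicitly orthogonal to $v$, or with $K(v,w)$ silently meaning $\langle R(v,w)v,w\rangle$); your computation is the correct one and is in fact the version needed downstream when one averages over $w$ to recover $\Ric_x(v)$. You should state this discrepancy explicitly rather than attempting an unjustified absorption. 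A secondary, smaller gap: the remainder bound $L(1)=L(0)+\tfrac12 L''(0)+O(L(0)|w|^3)$ requires a uniform estimate $|L'''|\lesssim L(0)|w|^3$ on $[0,1]$, which is asserted by appealing to ``smoothness and compactness''; compactness alone gives only a constant bound, not one vanishing with $L(0)$ and $|w|$, so a third-variation (or equivalent scaling) argument is genuinely needed here.
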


In the remainder, we will focus our discussion on \emph{embedded submanifolds} $\M$ of $\R^d$, which are defined as follows.
\begin{definition}[Embedded submanifold (see, e.g.,~\citep{boumal})]
We say that $\Mc \subseteq \R^d$ is a smooth embedded submanifold of $\R^d$ of dimension $m$ strictly less than $d$ if for every $x \in \M$ there is a ball $B(x,r) \subseteq \R^d$ and a smooth function $h_x: B(x, r) \rightarrow \R^d$ (termed \emph{defining function}), such that (i) $h_x(y)=0$ iff $y \in \Mc \cap B(x,r)$ and (ii) ${\rm rank} \; \nabla h_x(x)=d-m$.

Moreover, the inner product $g_x$ defined over $T_x \M$, the latter now seen as a subspace of $\R^d$, is the restriction of $\langle \cdot, \cdot \rangle$, the $\R^d$ inner product, to $T_x \M$.
\label{def:Embedded}
\end{definition}

\medskip

In the sequel, we use the \emph{second fundamental form} $\mathrm{I\!I}$ of the embedded manifold $\M$ in order to discuss data driven approximations to the geodesic distance $d_\M$. Let $N_x\M$ denote the normal space of $\M$, i.e., the orthogonal complement of $T_x \M$ in $\R^d$. The second fundamental form is given by the map $\mathrm{I\!I}_x: T_x \M \times T_x \M \rightarrow N_x\M$ defined by $(u,v) \mapsto (\mathrm{Id} - {\rm Proj}_{x})  (\frac{d}{dt} \restr{V(t)}{t=0})$. Here, ${\rm Proj}_x: \; \R^d \rightarrow T_x \M$ denotes the orthogonal projection onto the tangent space; $\gamma$ is a curve on $\M$ with $\gamma(0)=x$ and $\dot{\gamma}(0)=u$; $V$ is a vector field along $\gamma$ with $V(t) \in T_{\gamma(t)} \M$ satisfying $V(0)=v$.

Lastly, we define the \emph{reach} of the embedded manifold $\Mc$. Let $S \subset \mathbb{R}^d$ denote a closed subset of $\R^d$ and let $\pi_S$ be the function that maps a point $z$ (in $\R^d$) to its nearest neighbor in $S$ (if this nearest neighbor is well defined). The reach $\tau_\Mc$ of $\Mc$ is defined as the maximal neighborhood radius for which the projection $\pi_S$ is well-defined, i.e., any point that has distance at most $\tau_\Mc$ from $\Mc$ has a unique nearest neighbor on $\Mc$.

Second fundamental form and reach are notions that are quantitative was to measure the \textit{extrinsic} curvature of a manifold. In contrast, Ricci curvature and sectional curvature are \textit{intrinsic} notions. A standard way to visualize the difference between the two types of curvature is to imagine a circle drawn on a flat piece of paper, which one can think of as a one dimensional manifold $\M$ embedded in $\R^3$, and then the same circle after the paper has been rolled to form a cylinder, which can also be thought of as a one dimensional manifold $\M'$ embedded in $\R^3$. While $\M$ and $\M'$ have the same intrinsic curvature (because the paper is not stretched when rolling it), their extrinsic curvatures are different.

\begin{figure}
    \centering
\includegraphics[trim={5cm 25cm 10cm 12cm},width = 1.1\linewidth]{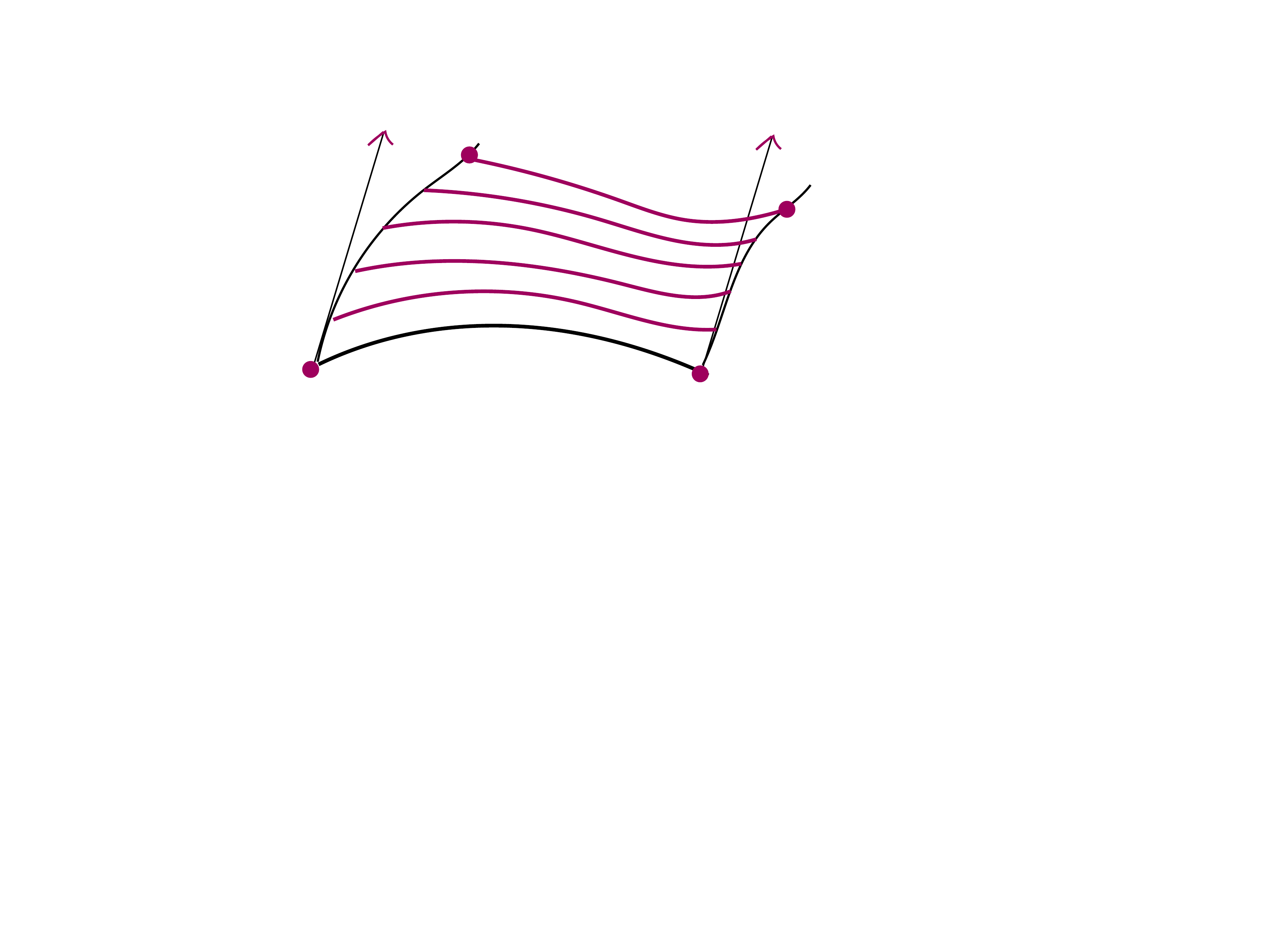}
\put(-255, 125 ){$\tilde x$}
\put(-112, 90 ){$\tilde y$}
\put(-315, 20 ){$x$}
\put(-157, 14 ){$y$}
    \caption{Levi-Civita parallelogram. All curves represent geodesics and the straight lines represent tangent vectors.}
    \label{fig:LeviCivita}
\end{figure}

\nc

\subsection{Ollivier's Ricci curvature }
\label{sec:Ollivier}

Let $(\U,d)$ be a metric space and  let $\mu_1, \mu_2$ be two (Borel) probability distributions over $\U$. Recall that the \emph{1-Wasserstein distance} between $\mu_1, \mu_2$ is defined as
\begin{equation}\label{eq:W1-dist}
    W_1(\mu_1, \mu_2) := \inf_{\pi \in \Gamma(\mu_1,\mu_2)} \int_{(x,y) \in \U \times \U} d(x,y) d\pi(x,y) \; ,
\end{equation}
where $\Gamma(\mu_1,\mu_2)$ is the set of measures on $\U \times \U$ with first and second marginals equal to $\mu_1$ and $\mu_2$, respectively. Let $\mathbf{m}$ be a Markov kernel over $\Xc$, i.e., $\mathbf{m}$ is a (measurable) collection $\{m_x \}_{x \in \U}$ of probability measures over $\U$. The Ollivier Ricci curvature associated to the triplet $(\U,d, \mathbf{m})$ in direction $(x,y)$ is defined as (see~\citep{ollivier2009ricci}):
\begin{equation}\label{eq:ORC-metric}
    \kappa(x,y) := 1 - \frac{W_1(m_x,m_y)}{d(x,y)}.
\end{equation}
Notice that, in general, the notion of Ollivier Ricci curvature only requires the structure of a metric space endowed with a (discrete-time) random walk. In the remainder of this section we will consider the case where $\U$ is assumed to be a Riemannian manifold and discuss the relation between \eqref{eq:ORC-metric} and the geometric notion of Ricci curvature introduced in section \ref{sec:DiffGeometry}. To do this we first need some definitions.

\begin{definition}[Ollivier's Ricci curvature on manifolds~\citep{ollivier2009ricci}]
Let $d_{\Mc}(x,y)$ denote the geodesic distance in $\M$ between $x$ and $y$ and let $B_\Mc(x,\veps)$, $B_\Mc(y,\veps)$ be the closed balls of radius $\veps$ (a fixed parameter) around $x$ and $y$, respectively (termed \emph{Ollivier balls}). Let further
\begin{align}
    \mu_x^\Mc(z) &= \frac{\vol(dz) \lfloor_{B_{\M}(x,\veps)}}{\vol(B_{\Mc}(x,\veps))} \\
    \mu_y^\Mc(z) &= \frac{\vol(dz)\lfloor_{B_{\M}(x,\veps)}}{\vol(B_{\Mc}(y,\veps))}
\end{align}
denote uniform measures on those neighborhoods. We define \emph{Ollivier's Ricci curvature} between $x$ and $y$ as
\begin{equation}
    \k_\Mc (x,y) = 1 - \frac{W_1(\mu_x^\Mc, \mu_y^\Mc)}{d_\Mc(x,y)} \; .
\end{equation}
\label{def:OllivierBalls}
\end{definition}
\noindent Ollivier showed the following fundamental relationship between $\Ric$ and $\k_\Mc$:

\begin{theorem}[Ollivier~\citep{ollivier2009ricci}]\label{thm:ollivier1}
\begin{equation}
    \Big\vert \k_\Mc (x,y) -  \frac{\veps^2}{2(m+2)} \Ric_x(v)  \Big\vert \leq  \left(  C \veps^2 d_\Mc(x,y) + C' \veps^3
    \right)\; ,
\end{equation}
where $y$ is a point on the geodesic from $x$ in direction $v$ (of norm one) and $\veps$ is the radius of the Ollivier balls. $C, C'$ are constants independent of $m$.
\end{theorem}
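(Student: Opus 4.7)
The plan is to estimate $W_1(\mu_x^\Mc, \mu_y^\Mc)$ by exhibiting a near-optimal transport plan and matching it with a lower bound from Kantorovich duality. The natural candidate transport plan is the Levi-Civita parallelogram map $\mathcal{P}$ from \eqref{eqn:ParallelF}, which sends $B_\M(x,\veps)$ onto $B_\M(y,\veps)$. The push-forward of $\mu_x^\Mc$ under $\mathcal{P}$ is not exactly $\mu_y^\Mc$ because the Jacobian of $\mathcal{P}$ differs from $1$ by $O(d_\M(x,y)\cdot|w|^2)$ corrections (coming from the curvature expansion of the exponential maps at $x$ and $y$ and the rescaling by $\vol(B_\M(y,\veps))/\vol(B_\M(x,\veps))$). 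First I would upper bound the additional mass that needs to be reshuffled and show that correcting $\mathcal{P}_\# \mu_x^\Mc$ into $\mu_y^\Mc$ contributes a transport cost of order $O(\veps^3 + \veps^2 d_\M(x,y))$, absorbable into the error on the right hand side.

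Next, for the bulk cost of the plan I would integrate the Levi-Civita estimate from the preceding Proposition, namely
\begin{equation*}
d_\Mc(\tilde x, \mathcal{P}(\tilde x)) = d_\Mc(x,y)\left(1 - \frac{|w|^2}{2}\bigl(K(v,w/|w|) + O(\veps+ d_\Mc(x,y))\bigr)\right),
\end{equation*}
with $w=\log_x(\tilde x)$, against the uniform measure on $B_\M(x,\veps)$. The main computation is to evaluate $\mathbb{E}_w[|w|^2 K(v,w/|w|)]$ with $w$ uniform on $B(0,\veps)\subset T_x\Mc$. Passing to polar coordinates decouples the radial and angular parts: the radial integral yields the factor $\frac{m}{m+2}\veps^2$, while the angular integral reduces, via a spherical averaging identity applied to $K(v,\cdot)$ together with an orthonormal decomposition $\{v,u_1,\dots,u_{m-1}\}$ of $T_x\Mc$, to a multiple of $\sum_{i=1}^{m-1} g(R(v,u_i)v,u_i) = (m-1)\Ric_x(v)$. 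The constants combine to give
\begin{equation*}
\mathbb{E}_w\!\left[\tfrac{|w|^2}{2}K(v,w/|w|)\right] = \tfrac{\veps^2}{2(m+2)}\Ric_x(v),
\end{equation*}
producing the claimed leading term $\frac{\veps^2}{2(m+2)}\Ric_x(v)$ after dividing by $d_\Mc(x,y)$. The error term $O(\veps+d_\Mc(x,y))$ inside the parenthesis, multiplied by $|w|^2 \lesssim \veps^2$ and by $d_\Mc(x,y)$ outside, produces exactly the remainder $C\veps^2 d_\Mc(x,y) + C'\veps^3$.

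This bounds $\kappa_\Mc(x,y)$ from below by the claimed quantity minus the error. For the matching upper bound I would invoke Kantorovich duality, $W_1(\mu_x^\Mc,\mu_y^\Mc) = \sup_{f \; 1\text{-Lipschitz}} \int f \,d(\mu_x^\Mc - \mu_y^\Mc)$, and test with a $1$-Lipschitz function that approximates, in $B_\M(x,\veps)\cup B_\M(y,\veps)$, the signed distance $\tilde x \mapsto d_\Mc(\tilde x, y)-d_\Mc(x,y)$ (shifted and smoothly extended). A Taylor expansion of this function along geodesics emanating from $x$, together with the same Levi-Civita calculation applied to pairs $(\tilde x, \mathcal{P}(\tilde x))$, reproduces the same leading contribution $\frac{\veps^2}{2(m+2)}\Ric_x(v)$ modulo the same size of remainders. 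Combining the two inequalities yields the theorem.

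The main technical obstacle will be bookkeeping the error terms carefully: one must simultaneously control (i) the Jacobian mismatch of $\mathcal{P}$ with respect to the volume forms restricted to the two Ollivier balls, (ii) the cubic-in-$\veps$ remainders coming from the Jacobi field expansion hidden inside the $O(\veps+d_\Mc(x,y))$ factor in the Levi-Civita proposition, and (iii) the anisotropy of the spherical average, which is where the dimensional factor $\frac{1}{m+2}$ rather than $\frac{1}{m}$ arises. Once these three sources of error are shown to be uniformly $O(\veps^2 d_\Mc(x,y) + \veps^3)$ with constants independent of $m$, the result follows.
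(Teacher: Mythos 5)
Your overall strategy matches the paper's: push $\mu_x^\Mc$ forward under the Levi--Civita map $\mathcal{P}$ (corrected for the small density mismatch), integrate the distance contraction formula and use spherical averaging to extract $\frac{\veps^2}{2(m+2)}\Ric_x(v)$, giving one inequality on $W_1$; then close the gap via Kantorovich--Rubinstein duality. The first half is sound, though the claim that the Jacobian-correction transport cost is $O(\veps^3 + \veps^2 d_\Mc(x,y))$ is too loose: after dividing by $d_\Mc(x,y)$ this would contribute $O(\veps^3/d_\Mc(x,y) + \veps^2)$ to the error in $\kappa_\Mc$, which is not absorbed by $C\veps^2 d_\Mc(x,y) + C'\veps^3$ when $d_\Mc(x,y)$ is small. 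You need the sharper bound $O(d_\Mc(x,y)\,\veps^3)$, which the paper obtains from the $W_\infty$ estimate $W_\infty(\mathcal{P}_\sharp\mu_x^\Mc, \mu_y^\Mc)\lesssim d_\Mc(x,y)\,\veps^3$ via Corollary~\ref{cor:InftyOT}.

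The genuine gap is in your choice of test function for the dual bound. You propose $f(\tilde x) = d_\Mc(\tilde x, y) - d_\Mc(x,y)$, but the paper uses the \emph{signed distance to the geodesic hypersurface} $\exp_x(E_0)$ through $x$ perpendicular to $\log_x(y)$, and the distinction is not cosmetic. Your $f$ agrees with the paper's near $x$, but on $B_\Mc(y,\veps)$ it behaves like $|\log_y(\tilde y)| - d_\Mc(x,y)$ (the unsigned norm), rather than the linear form $-\langle P_{xy}v, \log_y(\tilde y)\rangle - d_\Mc(x,y)$ that the paper's choice produces. In flat space this is already fatal: with $x$ at the origin and $y=\delta v$, one computes
\begin{equation*}
\int f\,d\mu_x^\Mc - \int f\,d\mu_y^\Mc = \mathbb{E}_{|z|\le \veps}\bigl[|z-\delta v|\bigr] - \mathbb{E}_{|z|\le \veps}\bigl[|z|\bigr] = \delta - \tfrac{m}{m+1}\veps + O(\veps^2/\delta),
\end{equation*}
whereas the true $W_1$ is exactly $\delta$, so your dual lower bound is short by $\Theta(\veps)$. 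This yields an upper bound $\kappa_\Mc(x,y) \le O(\veps/d_\Mc(x,y))$, which is far weaker than the claimed $O(\veps^2 d_\Mc(x,y) + \veps^3)$ whenever $d_\Mc(x,y)$ is small. The problem is that $d_\Mc(\cdot, y)$ has a conical singularity at $y$; its level sets near $y$ are spheres, not the approximately flat hypersurfaces orthogonal to $v$ that one needs in order for $f(\mathcal{F}(\tilde x)) - f(\tilde x)$ to track $d_\Mc(\tilde x, \mathcal{F}(\tilde x))$ up to the required accuracy. The paper's hypersurface function (Eq.~\eqref{eqn:functionF}), globally extended by McShane--Whitney, has exactly constant gradient $v$ in the flat model, so that $f(\mathcal{F}(\tilde x)) - f(\tilde x) = d_\Mc(x,y)$ exactly there, and the Levi--Civita curvature correction then appears as the only $O(\veps^2)$ term. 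Replacing your test function with this one is essential; as written, the upper-bound half of the argument fails.
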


This theorem makes precise the intuition that random walks in $\M$ starting at nearby points draw closer together if the Ricci curvature is positive and further apart if the Ricci curvature is negative, provided the walks are suitably coupled. It also provides the motivation for extending the definition of Ricci curvature to arbitrary triplets $(\U , d , \mathbf{m})$.


In section \ref{sec:ProofTheorem1}, we present the main steps in the proof of Theorem \ref{thm:ollivier1}. This will give us the opportunity to introduce some key estimates and constructions that we use later in section \ref{sec:NonAsymptoticGuarantees} when proving our main results.

%

\subsection{Curvature on Random Geometric Graphs}
\label{sec:DiscreteOllivier}

We recall the notion of a \emph{random geometric graph} (short: RGG) on $\M$. Let $\vol$ denote $\Mc$'s volume form and let $\mu$ be the uniform probability measure over $\M$ defined by
\[\mu (A) := \frac{\int_{A} d\vol(x)  }{\int_{\M} d\vol(x)} \]
for all Borel measurable subsets $A$ of $\M$. Let $\X = \lbrace x_i \rbrace_{i=1}^n$ be a collection of i.i.d. samples from $\mu$. In the sequel, we use $\mu_n$ to denote the empirical measure associated to these data points. Namely,
\[\mu_n:= \frac{1}{n}\sum_{i=1}^n \delta_{x_i}. \]

We construct a random geometric graph $G_\veps = (\X, w_\veps)$ by connecting any pair of samples $x,y \in \X$ with an edge whenever their geodesic distance, or an approximation thereof, is less than $\veps$. More precisely, we'll set $w_\veps$ to be either
 \begin{equation}
     w_{\veps}(x, y) = w_{\veps, \M} (x,y):= \begin{cases} 1 & \text{ if } d_\M(x,y) \leq \veps \\ 0 & \text{else}, \end{cases} 
     \label{eqn:WeightsManifold}
 \end{equation}
 or 
 \begin{equation}
   w_{\veps}(x,y) = w_{\veps, \X}(x,y):= \begin{cases} 1 & \text{ if } \hat{d}_g(x,y) \leq \veps \\ 0 & \text{else}. \end{cases}  
   \label{eqn:WeightsDataDriven}
 \end{equation}

The first setting, which uses the geodesic distance on $\M$, is only reasonable in applied settings where the manifold $\M$ is known. In contrast, the second setting only presupposes having access to a function $\hat{d}_g$ that serves as a data-driven local approximation for the geodesic distance $d_\M$. In section \ref{sec:hatd_g} we discuss some conditions that we need to impose on this approximation and highlight the need to work with approximations for $d_\M$ of high enough order if one desires to recover precise curvature information of the underlying manifold $\M$ from the graph $G_\veps$ as $n \rightarrow \infty$.


 Analogous to the continuous case, we can define Ollivier's Ricci curvature on the RGGs introduced above. In order to do so, we recall that we need two ingredients: a random walk over $\X$ and a notion of distance over $\X$.  
 
First, we introduce the graph Ollivier balls
 \begin{equation}
     B_G(x,\veps) := \lbrace z \in \X: \; w_\veps(x,z) =1 \rbrace \quad \forall x \in \X \; ,
 \end{equation}
  and we consider the family $\{ \mu_x^G \}_{x\in \X}$ of uniform distributions
 \begin{equation}
     \mu_x^G (z) = \frac{1}{\vert B_G(x,\veps) \vert} \; , \quad z \in B_G(x,\veps) \;. 
 \end{equation}
 Observe that the measures $\mu_x^G$ define the transition probabilities for a random walk on the RGG. The generator of the discrete-time Markov chain with transition probabilities given by $\{ \mu_x^G \}_{x\in \X}$ is known in the literature as the \textit{random walk graph Laplacian}; see \citep{VonLuxburg2007}.

\begin{remark}
\label{rem:GeneralizedOllivierBalls}
Notice that the construction of the ball $B_G(x,\veps)$ and its associated probability measure over $\X$, $\mu_x^G$, continues to make sense for any arbitrary base point $x \in \M$, regardless of whether $x$ is a data point in $\X$ or not. This observation will be used in our theoretical analysis in subsequent sections.
\end{remark}

The second ingredient needed to define Ollivier's Ricci curvature over $G_\veps$ is a distance function $d_G$ over $\X$. Two specific choices for $d_G$, one useful when $d_\M$ is unknown and the other useful when $d_\M$ is known, will be discussed in section \ref{sec:GeodesicDistancesX}; both choices will endow $\X$ with a suitable geodesic metric space structure. 
Either way, once $d_G$ has been fixed, we can define, relative to the family of measures $\{\mu_x^G\}_{x \in \X}$ (which in turn we recall depends on the choice of $w_\veps$), the Ollivier Ricci curvature between points $x, y \in \X$ as
 \begin{equation}
     \k_G (x,y):= 1 - \frac{W_{1,G} (\mu_x^G, \mu_y^G)}{d_G(x,y)}.
     \label{eqn:GraphCurvature}
 \end{equation}
 In the above, $W_{1,G}(\mu_x^G, \mu_y^G)$ is the $1$-Wasserstein distance induced by the metric $d_G$ over $\X$. Precisely,
\begin{equation}\label{eq:W1G-dist}
    W_{1,G} (\mu_x^G, \mu_y^G) = \min_{\pi \in \Gamma(\mu_x^G,\mu_y^G)} \int d_G(\tilde x, \tilde y) d\pi(\tilde x , \tilde y).
\end{equation}
 

\nc

 \section{Distance functions over $\X$}
 \label{sec:APproximationGeodesics}

 As discussed at the end of the previous section, to associate a notion of Ollivier's Ricci curvature to the data set $\X$ we need to introduce two functions $\hat{d}_g$ and $d_G$ that are used to specify a Markov chain and a metric over $\X$. In this section we provide more details on these two functions.

  \subsection{Assumptions on $\hat{d}_g$}
\label{sec:hatd_g}
 
 In one of the settings that we study in this paper we assume that we have no access to pairwise geodesic distances between points $x,y$ in our data set. In this setting, in order to recover the manifold's intrinsic curvature information from the data we will assume to have access to an oracle, data-driven estimator of $d_\M$, denoted $\hat{d}_g$, satisfying the following conditions. 

\begin{assumptions}
    \label{assump:hatdg} 
    
    The function $\hat{d}_g : \M \times \M \rightarrow [0, \infty) $ is assumed to be a symmetric function satisfying, with probability at least $1 - C\exp( - \zeta(\beta, n,  \veps))$, the following conditions:  
\begin{enumerate}
    \item For every pair $x,y \in \M$ satisfying $ c \veps \leq d_\M(x,y)  \leq  C \veps   $ or  $ c \veps \leq \hat{d}_g(x,y)  \leq  C \veps   $ we have
    \begin{equation}
    |d_\M(x,y) - \hat{d}_g(x,y)| \leq   C_1 \beta \veps^3 + C_2\veps^4.
\label{eqn:ApproxDistance_HigherOrder}
\end{equation}

\item For every pair $x,y \in \M$ the following implication holds:
\begin{equation}
\hat{d}_g(x,y) \leq c \veps \quad  \Longrightarrow
 \quad d_\M(x,y) \leq \frac{4}{3}  c\veps.  
 \label{eq:SmallScaledHat}
\end{equation}

\end{enumerate}
Here, $\zeta(n, \beta,\veps)$ is assumed to be of the form $C n^{q_1} \veps^{q_2} \beta^{q_3} $ for positive powers $q_1, q_2, q_3>0$. In particular, $\zeta(n, \beta,\veps) \rightarrow \infty$ as $n \rightarrow \infty$, whenever $\beta >0$ and $\veps>0$ are fixed.

\end{assumptions}

\begin{remark}
Observe that in Assumption \ref{assump:hatdg} we only require the function $\hat{d}_g(x,y)$ to satisfy symmetry but none of the other axioms defining a distance function. 
\end{remark}

The first condition in Assumption \ref{assump:hatdg} states that, with high probability, $\hat{d}_g$ must approximate $d_\M$ locally with an error of order four, at least as long as the distance between points is not too small. In general, one should not expect the same type of error estimate for $d_\M$ at very small length scales, which is why we instead require condition 2 for nearby points, a much milder assumption. 

It is worth highlighting that the Euclidean distance is not a valid choice for $\hat{d}_g$, since its error of approximation of $d_\M$ is only of order three; see the discussion in Appendix \ref{sec:EuclideanVsGeodesic} for more details. On the other hand, a straightforward computation, which we show in detail in Appendix \ref{sec:EuclideanVsGeodesic}, reveals that
\begin{equation}
\label{eq:HigherOrderApprox}
d_\M(x,y) = |x-y| + \frac{1}{24} \langle \ddot{\gamma}(0), \ddot{\gamma}(0) \rangle  |x-y|^3 + O(|x-y|^4), \quad x, y \in \M,  
\end{equation}
where $\ddot{\gamma}(0)$ is the acceleration (in the ambient space $\R^d$) at time $0$ of the unit speed geodesic connecting $x$ and $y$. The above formula thus suggests estimating the term $\langle \ddot \gamma(0), \ddot \gamma(0) \rangle$ to define a data-driven approximation $\hat{d}_g$ for $d_\M$. In turn, to estimate $\langle \ddot \gamma(0), \ddot \gamma(0) \rangle$ it suffices to estimate $\M$'s second fundamental form, as one can write 
\[ \ddot{\gamma}(0) = \mathrm{I\!I}_x(\dot \gamma(0) , \dot \gamma(0)).\]
The above discussion motivates introducing the function 
\begin{equation}
  \hat{d}_g(x,y) = |x-y| + \frac{1}{48}\left( |\hat{\mathrm{I\!I}}_{xy}|^2 + |\hat{\mathrm{I\!I}}_{yx}|^2 \right) |x-y|^3, \quad x,y \in \X,  
  \label{eqn:EstimatedPreDistance}
\end{equation}
where $\hat{\mathrm{I\!I}}_{xy}$ is an estimator, built from data, for 
\[\mathrm{I\!I}_{xy}:= \mathrm{I\!I}_x(\dot{\gamma}(0), \dot{\gamma}(0))= \mathrm{I\!I}_x \left( \frac{\log_x( y)}{|\log_x(y)|},  \frac{\log_x( y)}{|\log_x(y)|}  \right);\]
observe that $\hat{d}_g$ defined in this way is a symmetric function, as required in Assumption \ref{assump:hatdg}. In Appendix~\ref{sec:EstimatesSecondFundForm} we discuss some existing approaches in the literature for building the estimators $\hat{\mathrm{I\!I}}_{xy}$. 
The relevant observation is that if $|\mathrm{I\!I}_{xy}|^2$ can be approximated using $|\hat{\mathrm{I\!I}}_{xy}|^2$ within error $\beta$, then we could use the fact that $ |\mathrm{I\!I}_{xy}|^2 = |\mathrm{I\!I}_{yx}|^2 + O(|x-y|) $, given that the manifold $\M$ is smooth, to conclude that
\[|\hat{d}_g(x,y) - d_\M(x,y)| \leq C_1 \beta |x-y|^3 + C_2 |x-y|^4, \]
obtaining in this way a higher order approximation for $d_\M$ than the Euclidean distance.

There are several other ways in which one can construct a data-driven approximation for $d_\M$ (see more discussion in section \ref{sec:Literature}) and in the sequel we will not commit to using any particular form for it.  
On the other hand, at this stage we wanted to give some particular attention to the choice \eqref{eqn:EstimatedPreDistance} to highlight that, when combining with our main theoretical results in section \ref{sec:MainResults} (specifically Theorems \ref{thm:consistency-non-asymp-geo} and \ref{thm:GlobalBounds2}), by using \eqref{eqn:EstimatedPreDistance} to define Ollivier's Ricci curvature over the data set $\X$ we would be turning estimators of \textit{extrinsic} curvature into estimators for \textit{intrinsic} curvature. This conceptual implication is not a priori obvious.

\subsection{Definitions of $d_G$}
\label{sec:GeodesicDistancesX}
 To define a geodesic distance $d_G$ over $G=(\X,w_\veps) $ (interpreting $w_\veps$ as either \eqref{eqn:WeightsManifold} or as \eqref{eqn:WeightsDataDriven})  we first introduce the following ``pre-distance" functions:

 \[ \tilde d_{G,\M}(x,y) := \begin{cases} 0, & \text{if } x=y, \\
 \delta_0 \psi \left( \frac{d_\M(x,y)}{\delta_0} \right), & \text{if } 0<d_\M(x,y)\leq \delta_1,  \\
 +\infty, & \text{otherwise},
 \end{cases} \]
and
 \[ \tilde d_{G,\X }(x,y) := \begin{cases} 0, & \text{if } x=y, \\ \delta_0 \psi \left( \frac{\hat d_g(x,y)}{\delta_0} \right), & \text{if } 0<\hat{d}_g(x,y)\leq {\delta}_1,\\
 +\infty, & \text{otherwise},
 \end{cases} \]
 \nc 
 where $\hat{d}_g$ satisfies Assumption \ref{assump:hatdg}. In the above, we use parameters $\delta_0<\delta_1$ that in terms of the parameter $\veps$ will be written as 
 \begin{equation}
    \delta_0 := c_0 \veps, \quad \delta_1 := c_1 \veps
    \label{def:Delta_0Delta_1}
 \end{equation}
 where $c_0$ is a fixed but small enough number and $c_1 $ is fixed but large enough; see more details below. Finally, the profile function $\psi$ appearing in both definitions is assumed to satisfy the following conditions: 
 \begin{assumptions}
 \label{assumpPsi}
   The function $\psi:[0, \infty) \rightarrow [0, \infty)$ satisfies the following:
   \begin{enumerate}
\item $\psi$ is $C^2$, non-decreasing, and convex.
\item $\psi(t)= t$ for all $t \geq 1$.
\item $\psi(0) >0 $ and $\psi'(0)>0$.
   \end{enumerate}
 \end{assumptions}
In Figure \ref{fig:ProfilePsi} we provide an example of an admissible profile function $\psi$. As we discuss in detail in Remark \ref{rem:OnPsi}, Assumption \ref{assumpPsi} on the profile function guarantees that the geometry of the RGG is suitably glued together when moving between two length-scales at which the RGG exhibits two different qualitative (and quantitative) behaviors. Indeed, at very small length-scales the restriction of the RGG $G_\veps$ around any given point behaves like a complete graph, while at larger scales the RGG exhibits manifold-like behavior. In order to make statements about global curvature lower bounds for the RGG (see our main results in section \ref{sec:MainGlobal}), that is, to say that \textit{for all} $x, y \in \X$ the value of $\kappa_G$ is lower bounded  by a constant, we use the bending of $\psi$ as it approaches the origin as a helpful technical tool that allows us to control the error estimates in the regions of transition from complete graph to manifold-like behavior. This bending comes at the price of introducing a bias in the estimated curvature lower bound (see for example \eqref{eqn:LowerboundTheorem1} in Theorem \ref{eqn:LowerboundTheorem1}), which in some cases may not be too important (for example, when stating contractivity properties of heat kernels, as we do in section \ref{sec:LaplaciansandRegression}). We anticipate that removing this bias requires a much more delicate analysis as currently we use two very different approaches for estimating lower bounds in each of the regimes described above.

\begin{figure}
    \centering
\includegraphics[trim={0cm 0cm 0cm 0cm},width = 0.5\linewidth]{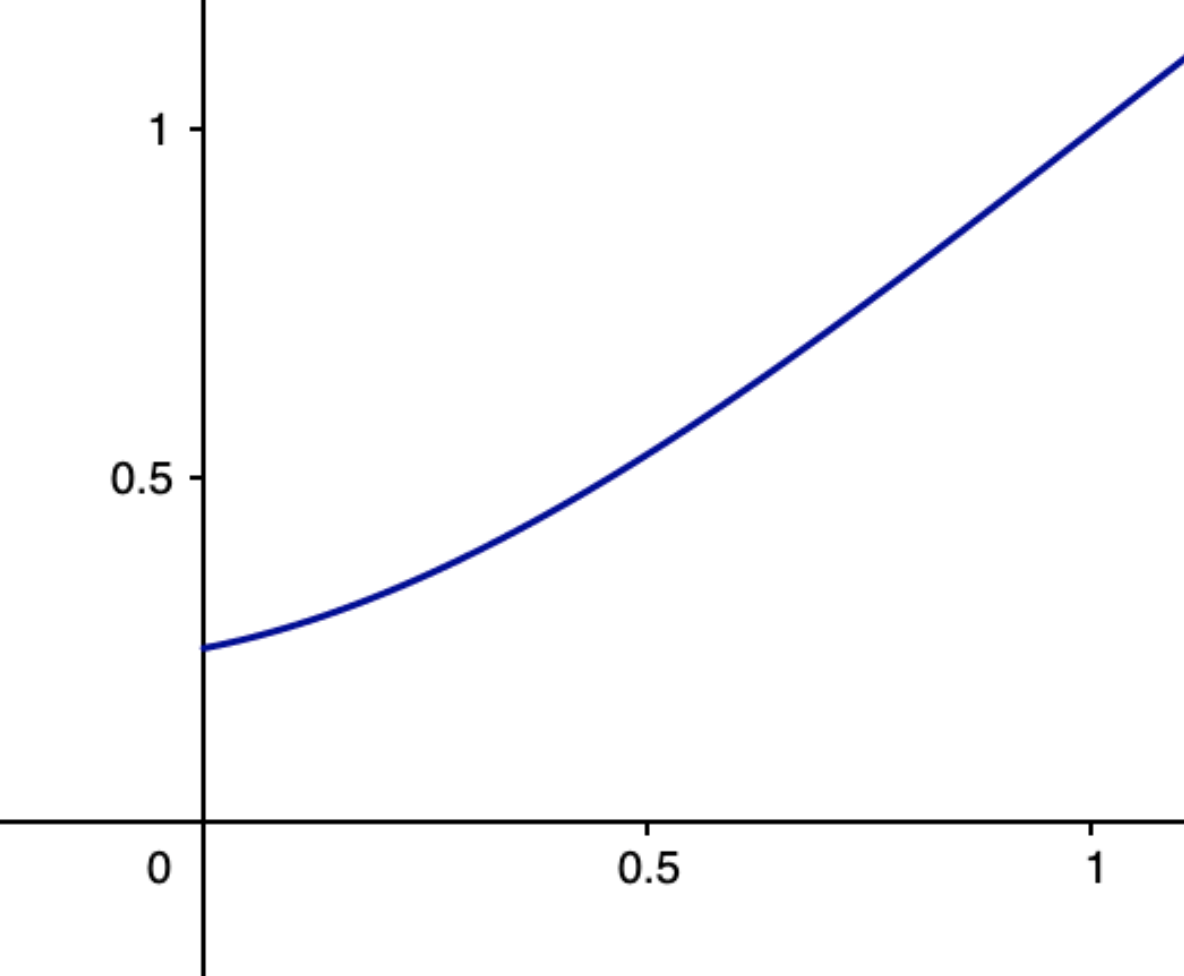}
\caption{Plot of the function $\psi(t):= \begin{cases} \frac{1}{4} (1-t)^3 + t & \text{ if } 0 \leq  t \leq 1 \\ t & \text{ if } t >1 \end{cases},$ which satisfies all the required conditions in Assumption \ref{assumpPsi}. \nc  }
    \label{fig:ProfilePsi}
\end{figure}

 \medskip
 
We may now use the pre-distance functions to define two geodesic distances over $\X$: 
\begin{equation}
 d_{G,\M}(x, y) := \inf_{\substack{n \in \N,\: \{ x_i \}_{i=0}^n \subseteq \X \\  x_0=x,\:  x_n=y  }} \sum_{i=0}^{n-1} \tilde d_{G,\M}(x_i, x_{i+1}),
 \label{eq:DistancefromGeodesic}
 \end{equation}
and 
\begin{equation}
\label{eq:DistancefromDataDriven}
d_{G,\X}(x, y) := \inf_{\substack{n \in \N,\: \{ x_i \}_{i=0}^n \subseteq \X \\  x_0=x,\:  x_n=y  }} \sum_{i=0}^{n-1} \tilde d_{G,\X}(x_i, x_{i+1}).
\end{equation}
Note that, in contrast to $d_{G,\M}$, the function $d_{G, \X} $ is completely data-driven and thus in principle more useful in applications where $d_\M$ is unknown. As we show below, both $d_{G,\M}$ and $d_{G, \X}$ are indeed distance functions over $\X$ and both of them induce Olivier Ricci curvature functions that recover $\M$'s Ricci curvature in the large data limit.

 \begin{lemma}
 \label{lem:d_GDistance}
Let $d_G = d_{G,\M}$ and $\tilde{d}_G = \tilde{d}_{G,\M}$, or $d_G = d_{G,\X}$ and $\tilde{d}_G = \tilde{d}_{G,\X}$. In either case the function $d_G$ is a distance over $\X$. Moreover, for every $x,y \in \X$ there exists a finite sequence $x_1, \dots, x_k \in \X$ such that:
 \begin{enumerate}
     \item $x_1=x$, and $x_k=y$.
     \item $d_G(x_i, x_{i+1}) = \tilde{d}_{G}(x_i , x_{i+1}) \leq \delta_1$.
     \item $d_G(x,y)= \sum_{i=1}^{k-1} d_{G}(x_i, x_{i+1}) $.
 \end{enumerate}
 \end{lemma}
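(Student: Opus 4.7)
My plan is to verify the three metric axioms for $d_G$ and then to realize the defining infimum as a minimum over simple sequences in $\X$, which will directly yield the desired finite sequence with its stated properties.

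First I would handle the metric axioms. Non-negativity is immediate from $\psi\ge 0$. Symmetry of $\tilde{d}_G$ follows from symmetry of $d_\M$ in the first case, and from the symmetry of $\hat{d}_g$ imposed in Assumption \ref{assump:hatdg} in the second case; reversing sequences then gives symmetry of $d_G$. The triangle inequality follows by concatenation: near-optimal sequences from $x$ to $z$ and from $z$ to $y$ can be spliced at $z$ to produce an admissible sequence from $x$ to $y$. For positive-definiteness with $x\neq y$, I would use Assumption \ref{assumpPsi}: since $\psi(0)>0$ and $\psi$ is non-decreasing, $\psi(t)\ge\psi(0)>0$ for all $t\ge 0$. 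Thus whenever $\tilde{d}_G(a,b)<\infty$ and $a\neq b$, we have $\tilde{d}_G(a,b)\ge\delta_0\psi(0)>0$. Any sequence $x=x_0,\dots,x_k=y$ with $x_0\neq x_k$ must contain at least one index $i$ with $x_i\neq x_{i+1}$, contributing at least $\delta_0\psi(0)$ (or $+\infty$) to the total, so $d_G(x,y)\ge\delta_0\psi(0)>0$.

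Next I would construct the realizing sequence. Because $\tilde{d}_G\ge 0$, any sequence with a repeated vertex can be shortened by deleting the loop without increasing the total, so the infimum in \eqref{eq:DistancefromGeodesic} (respectively \eqref{eq:DistancefromDataDriven}) coincides with the infimum over \emph{simple} sequences in $\X$. Since $\X$ is finite, there are only finitely many such sequences, so the infimum is attained; one picks a minimizer within the connected component of the $\tilde{d}_G$-finite graph containing both endpoints (the statement being vacuous in the complementary regime $d_G(x,y)=+\infty$). Let $x_1=x,\dots,x_k=y$ be such a minimizer. Each $\tilde{d}_G(x_i,x_{i+1})$ must be finite---otherwise the total would be $+\infty$ and not minimal---hence bounded by $\delta_1$ by construction of $\tilde{d}_G$, giving half of property 2. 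The trivial one-step sequence $(x_i,x_{i+1})$ gives $d_G(x_i,x_{i+1})\le\tilde{d}_G(x_i,x_{i+1})$; the reverse inequality follows by an optimality argument, because if it failed, substituting the step $x_i\to x_{i+1}$ by a strictly shorter $d_G$-realizing path would produce a sequence from $x$ to $y$ of total length less than $d_G(x,y)$, contradicting minimality. Property 3 is then immediate from the definition of the minimizer.

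The only non-routine ingredient is positive-definiteness, where the condition $\psi(0)>0$ in Assumption \ref{assumpPsi} plays a crucial role; without this bending of $\psi$ at the origin one could not a priori exclude $d_G(x,y)=0$ for distinct points, since the additive structure of the sums would allow arbitrarily small contributions along many short edges. The finiteness of $\X$ takes care of everything else, turning the variational definition of $d_G$ into a plain combinatorial minimum over simple paths.
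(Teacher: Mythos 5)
Your proof is correct and takes essentially the same approach as the paper's. You in fact supply details the paper leaves implicit — notably the role of $\psi(0)>0$ in guaranteeing positive-definiteness (which is the subtle point for $d_{G,\X}$, since $\hat{d}_g$ satisfies no triangle inequality, whereas for $d_{G,\M}$ one could alternatively use $\psi(t)\ge t$ together with the triangle inequality for $d_\M$), and the reduction to simple paths over the finite set $\X$ to realize the infimum — while the optimality argument establishing $d_G(x_i,x_{i+1})=\tilde{d}_G(x_i,x_{i+1})$ is identical to the paper's.
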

 \begin{proof}
 The fact that $d_G= d_{G,\M}$ is a distance function follows directly from the definitions. Likewise, we can see that $d_G= d_{G, \X}$ is a distance function thanks to the fact that $\hat{d}_g$ is symmetric, according to Assumption \ref{assump:hatdg}.

To prove the second part, notice that for an arbitrary pair $x,y \in \X$ we can find a path  $x_1, \dots, x_k \in \X$ with $x_1=x$ and $x_k=y$ such that $ d_G(x,y)= \sum_{i=1}^{k-1}  \tilde d _G (x_i , x_{i+1})$. Now, by definition of $d_G(x_i, x_{i+1})$, we have $\tilde d_G (x_i, x_{i+1}) \geq d_G(x_{i}, x_{i+1})$ for every $i=1, \dots, k-1$. If the inequality was strict for at least one $i$, then we would actually be able to build a path connecting $x,y$ whose length is strictly smaller than $d_G(x,y)$, which would be a contradiction. It follows that $d_G(x_i, x_{i+1})= \tilde d_G(x_i, x_{i+1})$ for all $i=1, \dots, n$.

 
 \end{proof}

\begin{lemma}
\label{lem:CurvatureGeodesicProp}
Let $\kappa  \in \R$ and suppose that 
\begin{equation}
   1 - \frac{W_{1,G}(\mu_x^G, \mu_y^G)}{d_G(x,y)}  \geq \kappa,
   \label{eq:LowerBoundCurvAux}
\end{equation}
for every $x,y \in \X$ satisfying $d_G(x,y) = \tilde d_{G}(x,y) \leq \delta_1$. Then \eqref{eq:LowerBoundCurvAux} holds for any pair $x,y \in \X$ 
\end{lemma}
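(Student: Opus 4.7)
The plan is to chain the local curvature lower bound along the piecewise $\tilde d_G$-geodesic connecting an arbitrary pair of points in $\X$, using the triangle inequality for $W_{1,G}$. The main ingredients are Lemma \ref{lem:d_GDistance}, which guarantees the existence of such a path with additive length, and the fact that $W_{1,G}$ is a genuine metric on probability measures over $(\X, d_G)$ (so in particular it satisfies the triangle inequality).

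More concretely, I would argue as follows. Fix $x,y \in \X$ with $x \neq y$ (the case $x=y$ being trivial since both sides of \eqref{eq:LowerBoundCurvAux} coincide with $0$ and $\kappa$ respectively, and $d_G(x,y)=0$ makes the statement vacuous). By Lemma \ref{lem:d_GDistance}, there exists a sequence $x_1,\ldots,x_k \in \X$ with $x_1=x$, $x_k=y$ such that
\begin{equation*}
 d_G(x_i,x_{i+1}) \;=\; \tilde d_G(x_i,x_{i+1}) \;\leq\; \delta_1, \qquad i=1,\ldots,k-1,
\end{equation*}
and $d_G(x,y)=\sum_{i=1}^{k-1} d_G(x_i,x_{i+1})$. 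By the assumed local bound \eqref{eq:LowerBoundCurvAux} applied to each consecutive pair $(x_i,x_{i+1})$, we obtain
\begin{equation*}
 W_{1,G}(\mu_{x_i}^G,\mu_{x_{i+1}}^G) \;\leq\; (1-\kappa)\, d_G(x_i,x_{i+1}), \qquad i=1,\ldots,k-1.
\end{equation*}

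Next, invoking the triangle inequality for the $1$-Wasserstein distance induced by the metric $d_G$, together with summation of the previous inequalities and the additivity property of $d_G$ along the chain, we get
\begin{equation*}
 W_{1,G}(\mu_x^G, \mu_y^G) \;\leq\; \sum_{i=1}^{k-1} W_{1,G}(\mu_{x_i}^G, \mu_{x_{i+1}}^G) \;\leq\; (1-\kappa)\sum_{i=1}^{k-1} d_G(x_i,x_{i+1}) \;=\; (1-\kappa)\, d_G(x,y).
\end{equation*}
Dividing by $d_G(x,y)>0$ and rearranging yields $1 - W_{1,G}(\mu_x^G, \mu_y^G)/d_G(x,y) \geq \kappa$, which is the desired bound for the arbitrary pair $(x,y)$.

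There is no serious obstacle in this argument; the only mildly subtle point is ensuring that the triangle inequality for $W_{1,G}$ is legitimately available, which is guaranteed by the fact that $d_G$ is a metric on $\X$ (as established in the first part of Lemma \ref{lem:d_GDistance}), so that the Kantorovich--Rubinstein duality turns $W_{1,G}$ into a bona fide metric on the space of probability measures on $\X$. Note also that the argument is agnostic to the sign of $\kappa$: if $1-\kappa < 0$ the chain inequality forces $W_{1,G}(\mu_{x_i}^G, \mu_{x_{i+1}}^G)=0$ for each local step, and the same telescoping immediately yields the claim for the global pair.
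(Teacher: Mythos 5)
Your proof is correct and coincides with the paper's approach, which simply cites Proposition~19 of Ollivier (2009) — precisely the chaining argument you spell out, with Lemma~\ref{lem:d_GDistance} supplying the additive $d_G$-geodesic decomposition and the triangle inequality for $W_{1,G}$ providing the telescoping. One small remark: your closing aside about $1-\kappa<0$ is slightly off — in that case the local hypothesis would be unsatisfiable for any pair of distinct neighbors (since $W_{1,G}\geq 0$ but the bound would require it to be negative), so the lemma holds vacuously rather than via a forced $W_{1,G}=0$; this does not affect your main argument.
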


\begin{proof}
  Thanks to Lemma \ref{lem:d_GDistance}, the proof is just as in [Prop.~19,~\citep{ollivier2009ricci}]. The emphasis here, however, is the fact that we only need to check the inequality for pairs $x,y$ for which the distance function $d_G$ and the pre-distance function $\tilde d _G $ coincide.   
\end{proof}

We finish this section with the following inequalities relating the metrics $d_{G,\X}$, $d_{G, \M}$, and $d_\M$. These inequalities will only be used later on in section \ref{sec:LocalConsistency} when studying curvature upper bounds.

\begin{proposition}
\label{prop:ComparisonMetrics} 
Under Assumption \ref{assump:hatdg}, for all small enough $\veps$ and $\beta$ the following holds:
\begin{equation}
    d_{G,\M}(x,y) \geq  d_\M(x,y), 
    \label{eq:Comparisson1}
\end{equation}
for all $x, y \in \X$, 
and, with probability at least $1- C\exp(- \zeta(n, \beta,\veps))$ we have
\begin{equation}
  (1+C(\beta \veps^2+\veps^3)) d_{G,\X}(x,y) \geq   d_\M(x,y).  
   \label{eq:Comparisson2}
\end{equation}
for all $x, y \in \X$. Moreover, if $x,y \in \X$ are such that $2\delta_0 \leq d_\M(x,y) \leq \frac{1}{2}\delta_1$, where $\delta_0$ and $\delta_1$ are as in the definition of $d_{G,\X}$ and $d_{G, \M}$, then
\begin{equation}
     d_{G,\X}(x,y) \leq  d_\M(x,y)( 1+ C( \beta \veps^2 + \veps^3))
      \label{eq:Comparisson3}
\end{equation}
and
\begin{equation}
     d_{G,\M}(x,y) \leq  d_\M(x,y).
      \label{eq:Comparisson4}
\end{equation}

\end{proposition}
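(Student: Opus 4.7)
The four inequalities share a single backbone: the profile $\psi$ satisfies $\psi(t)\geq t$ for every $t\geq 0$, and $\psi(t)=t$ for $t\geq 1$. Indeed, convexity of $\psi$ combined with $\psi'\equiv 1$ on $[1,\infty)$ forces $\psi'\leq 1$ on $[0,1]$, so $\psi(t)=1-\int_t^1 \psi'(s)\,ds\geq t$ on $[0,1]$. Hence, whenever the pre-distances are finite,
\[ \tilde d_{G,\M}(x,y)\geq d_\M(x,y), \qquad \tilde d_{G,\X}(x,y)\geq \hat d_g(x,y), \]
and both pre-distances collapse to the identity (i.e.\ become $d_\M(x,y)$ or $\hat d_g(x,y)$) whenever the argument of $\psi$ is at least $1$.

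For the lower bounds, \eqref{eq:Comparisson1} is immediate: along any admissible chain $x=x_0,\ldots,x_k=y$ in $\X$ the inequality above gives $\sum_i \tilde d_{G,\M}(x_i,x_{i+1})\geq \sum_i d_\M(x_i,x_{i+1})\geq d_\M(x,y)$ by the triangle inequality for $d_\M$, and we minimize over chains. For \eqref{eq:Comparisson2} I would work on the event of probability $\geq 1-Ce^{-\zeta(n,\beta,\veps)}$ on which Assumption \ref{assump:hatdg} holds, and prove the edgewise estimate $\tilde d_{G,\X}(u,v)\geq d_\M(u,v)\bigl(1+C(\beta\veps^2+\veps^3)\bigr)^{-1}$ by cases on the size of $\hat d_g(u,v)$. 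If $c\veps \leq \hat d_g(u,v)\leq \delta_1$, \eqref{eqn:ApproxDistance_HigherOrder} yields $d_\M(u,v)\leq \hat d_g(u,v)\bigl(1+C(\beta\veps^2+\veps^3)\bigr)$, and $\tilde d_{G,\X}(u,v)\geq \hat d_g(u,v)$ closes the bound. If instead $\hat d_g(u,v)<c\veps$, then \eqref{eq:SmallScaledHat} forces $d_\M(u,v)\leq \tfrac{4}{3}c\veps$, while the profile lift gives $\tilde d_{G,\X}(u,v)\geq \delta_0\psi(0)=c_0\psi(0)\veps$; the required bound follows from the calibration $c_0\psi(0)\geq \tfrac{4}{3}c$ among the constants entering the construction. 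The triangle inequality for $d_\M$ then upgrades the edgewise bound to \eqref{eq:Comparisson2}.

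The upper bounds \eqref{eq:Comparisson3} and \eqref{eq:Comparisson4} are obtained by exhibiting the one-edge chain $(x,y)$. For \eqref{eq:Comparisson4}, the hypothesis $2\delta_0\leq d_\M(x,y)\leq \tfrac{1}{2}\delta_1$ places $d_\M(x,y)/\delta_0\geq 2\geq 1$, so $\psi$ acts as the identity and $\tilde d_{G,\M}(x,y)=d_\M(x,y)\leq \delta_1$ is an admissible finite edge, giving $d_{G,\M}(x,y)\leq d_\M(x,y)$. For \eqref{eq:Comparisson3}, the same hypothesis together with Assumption \ref{assump:hatdg}(i) (applicable once $c_0$ is chosen so that $2\delta_0\geq c\veps$) yields
\[ \hat d_g(x,y)\leq d_\M(x,y)\bigl(1+C(\beta\veps^2+\veps^3)\bigr)\leq \delta_1, \quad \hat d_g(x,y)\geq d_\M(x,y)\bigl(1-C(\beta\veps^2+\veps^3)\bigr)\geq \delta_0, \]
for $\veps$ and $\beta$ small. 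Hence $\psi(\hat d_g(x,y)/\delta_0)=\hat d_g(x,y)/\delta_0$ and $\tilde d_{G,\X}(x,y)=\hat d_g(x,y)$, matching the right-hand side of \eqref{eq:Comparisson3}.

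The only genuinely delicate step is the small-scale case of \eqref{eq:Comparisson2}: when $\hat d_g<c\veps$, Assumption \ref{assump:hatdg}(i) provides no comparison between $\hat d_g$ and $d_\M$, and the naive bound $\tilde d_{G,\X}\geq \hat d_g$ is too weak to recover anything about $d_\M$. The bending of the profile near the origin (i.e.\ $\psi(0)>0$) is precisely what keeps the edgewise estimate nontrivial at these scales, and the calibration of $c_0$ against $c$ and $\psi(0)$ is what turns the piecewise edge bounds into the clean multiplicative inequality \eqref{eq:Comparisson2}. This is also where the ``bias'' alluded to in the discussion of the global lower bounds enters the construction.
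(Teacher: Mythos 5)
Your proposal is correct and follows essentially the same route as the paper: the bound $\psi(t)\geq t$ (forced by convexity plus $\psi=\mathrm{id}$ on $[1,\infty)$) gives the edgewise lower bounds, the triangle inequality for $d_\M$ upgrades them to the metric level, and the upper bounds \eqref{eq:Comparisson3}--\eqref{eq:Comparisson4} come from exhibiting the single-edge chain once the scale hypotheses force $\psi$ to act as the identity. Your case split for \eqref{eq:Comparisson2} by the size of $\hat d_g(u,v)$ --- Assumption~\ref{assump:hatdg}(2) near the origin, Assumption~\ref{assump:hatdg}(1) away from it --- mirrors the paper's decomposition of the chain into index sets $A$ and $B$, differing only in presentation (a clean edgewise inequality in your version versus a sum-level bookkeeping in the paper's).
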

\begin{proof}
Inequality \eqref{eq:Comparisson1} is immediate from the triangle inequality for $d_\M$ and the fact that $\tilde{d}_{G,\M}(x_{i},x_{i+1}) \geq   \delta_0\psi( d_\M(x_i,x_{i+1}) /\delta_0 ) \geq d_\M(x_i, x_{i+1}) $ for any two data points $x_i, x_{i+1}$ with $d_\M(x_i, x_{i+1}) \leq \delta_1$, since necessarily $\psi(t) \geq t$ for all $t \geq 0$.

To prove inequality \eqref{eq:Comparisson2}, consider an arbitrary discrete path $\{ x_i \}_{i=0}^n$ connecting $x$ and $y$ such that $\tilde{d}_{G, \X}(x_i, x_{i+1}) <\infty$ for all $i$. Then
\begin{align*}
\sum_{i=0}^{n-1} \tilde{d}_{G, \X}(x_{i}, x_{i+1}) & = \sum_{i=0}^{n-1} \delta_0\psi(\hat{d}_{g}(x_{i}, x_{i+1})/\delta_0) 
\\& =   \sum_{i \in A}\delta_0\psi(\hat{d}_{g}(x_{i}, x_{i+1})/\delta_0) +  \sum_{i \in B}\delta_0\psi(\hat{d}_{g}(x_{i}, x_{i+1})/\delta_0)
\\& \geq    \sum_{i \in A}\hat{d}_{g}(x_{i}, x_{i+1}) +  \sum_{i \in B}\delta_0\psi(\hat{d}_{g}(x_{i}, x_{i+1})/\delta_0)
,
\end{align*}
where $A:= \{ i \text{ s.t. } \hat{d}_g(x_i, x_{i+1}) \leq  \frac{3}{4} \psi(0)\delta_0  \} $ and $B:= \{ i \text{ s.t. } \hat{d}_g(x_i, x_{i+1}) >  \frac{3}{4} \psi(0) \delta_0 \} $. For an $i \in A$, we can use 2 in Assumption \ref{assump:hatdg} to conclude that $d_\M(x,y) \leq \delta_0 \psi(0) \leq  \tilde{d}_{G, \X}(x_i, x_{i+1})$, whereas for an $i$ in $B$ we can use 1 in Assumption \ref{assump:hatdg} to bound the difference between $\hat{d}_g(x_i, x_{i+1})$ and $d_\M(x_i, x_{i+1})$. In particular,
\begin{align*}
 \sum_{i=0}^{n-1} \tilde{d}_{G, \X}(x_{i}, x_{i+1}) & \geq  \sum_{i \in A}  ( \hat{d}_{g}(x_{i}, x_{i+1})  - d_\M(x_i, x_{i+1}) +  d_\M(x_i, x_{i+1}) )
 \\& +   \sum_{i \in B}   d_\M(x_i, x_{i+1})
\\& \geq \sum_{i=0}^{n-1}  (1 - C_1 \beta \veps^2 - C_2 \veps^3 )  d_\M(x_i, x_{i+1})
\\& \geq (1 - C_1 \beta \veps^2 - C_2 \veps^3 )  d_\M(x, y) ,
\end{align*}
where the last inequality follows from the triangle inequality for $d_\M$. From this we deduce that
\[ (1 + C (\beta \veps^2+ \veps^3)) d_{G, \X}(x,y) \geq d_\M(x,y), \]
for some constant $C$.

To prove \eqref{eq:Comparisson3}, notice that, under the assumption that $\veps$ is small enough, we can guarantee, thanks to \eqref{eqn:ApproxDistance_HigherOrder}, that $ \hat{d}_g(x,y) \in [\delta_0, \delta_1)  $. This means that 
\[  d_{G, \X}(x,y) \leq \hat{d}_g(x,y).   \]
In turn, applying \eqref{eqn:ApproxDistance_HigherOrder} again we can upper bound $\hat{d}_g(x,y)$ from above by $(1+ C(\beta \veps^2+\veps^3) ) d_\M(x,y)$. Inequality \eqref{eq:Comparisson3} now follows.

Inequality \eqref{eq:Comparisson4} is obvious from the definition of $d_{G, \M}$ and the assumption on $d_\M(x,y)$. 

\end{proof}

\nc

 \section{Main Results and Discussion}

 \label{sec:MainResults}

We are now ready to state our main results. Throughout this section we will make the following assumptions on the scale of the parameters that determine $G$, the estimator $\hat{d}_g$, and the discrete curvature $\kappa_G$.

\begin{assumptions}
\label{assump}
We assume that the following relations hold:
\begin{enumerate}
\item  $c_1 \geq 2 + 4c_0$, where $c_0$ and $c_1$ are as in \eqref{def:Delta_0Delta_1}.
\item $\veps$ and $\beta$ are sufficiently small and $n $ is sufficiently large.
\item The ratio $\frac{\log(n)^{p_m}}{n^{1/m} \veps^3}$ is sufficiently small, where $p_m$ is a dimension dependent quantity: $p_m=3/4$ when $m=2$, and $p_m= 1/m$ when $m\geq 3$. In the case $m=1$, we assume the ratio $\frac{\log(n)^{1/2}}{n^{1/2} \veps^3}$ to be sufficiently small.
\end{enumerate}
\end{assumptions}

The third item in Assumption \ref{assump} can be succinctly described as requiring that the $\infty$-OT transport distance between $\mu$ and $\mu_n$ (see \eqref{eq:W-inf} for a definition) is much smaller than $\veps^3$, according to Theorem \ref{thm:inftyOTEmpiricalBound}, where the $\infty$-OT distance between $\mu$ and $\mu_n$ gets bounded with very high probability.

 \subsection{Poinwise Consistency}
\label{sec:MainPointwiseState}
 
Van der Hoorn et al.~\citep{van2021ollivier} were the first to analyze the \emph{pointwise consistency} of some form of discrete Ricci curvature on an RGG. They give \emph{asymptotic} convergence guarantees of the following form:

 \begin{proposition}[Pointwise consistency~\citep{van2021ollivier}]\label{thm:consistency-asymp}
 Suppose that $d_G$ is taken to be $d_\M$ in \eqref{eqn:GraphCurvature} and that $\veps= \veps_n \sim n^{-\alpha}$ and $\delta=\delta_n= n^{-\beta}$ with $0< \beta \leq \alpha$ and $\alpha + 2\beta < \frac{1}{m}$. Then we have, in expectation over the RGG ensemble $G=(\X, w_{\veps,\M})$
 \begin{equation}
     \lim_{n \rightarrow \infty}\E \Bigg[ \left|\frac{1}{\delta^2} \kappa_G - \frac{1}{2(m+2)} \Ric_x(v) \right| \Bigg] =0 \; ,
 \end{equation}
 where $y= y_n$ is the point on the geodesic in direction $v$ starting at $x$ with $\delta=d_\Mc(x,y)$. 
 \end{proposition}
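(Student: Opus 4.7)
The plan is to decompose the overall error as
\[
\left| \tfrac{1}{\delta^2} \kappa_G - \tfrac{1}{2(m+2)} \Ric_x(v) \right| \leq \tfrac{1}{\delta^2}|\kappa_G - \kappa_\M| + \left| \tfrac{1}{\delta^2} \kappa_\M - \tfrac{1}{2(m+2)} \Ric_x(v) \right|,
\]
so that the problem splits into a continuum geometric piece (handled by Ollivier's Theorem \ref{thm:ollivier1}) and a discrete-to-continuum piece. Because the hypothesis sets $d_G = d_\M$, both curvatures share the same denominator $\delta$, and the discrete portion reduces entirely to comparing the Wasserstein distances between the empirical uniform measures $\mu_x^G, \mu_y^G$ and the continuous uniform measures $\mu_x^\M, \mu_y^\M$ supported on the same Ollivier balls, with transport cost computed against the same metric $d_\M$.

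The continuum piece is immediate from Theorem \ref{thm:ollivier1}, which gives
\[
\left| \kappa_\M(x,y) - \frac{\veps^2}{2(m+2)} \Ric_x(v) \right| \leq C \veps^2 \delta + C' \veps^3.
\]
Dividing by $\delta^2$ and using the scaling $\veps \sim n^{-\alpha}$, $\delta \sim n^{-\beta}$ with $\beta \leq \alpha$ controls this piece (the $\veps^2/\delta^2$ prefactor in front of $\Ric_x(v)$ behaves well under comparable scalings $\veps \asymp \delta$, which is the regime the normalization anticipates). For the discrete piece, the triangle inequality for $W_1$ gives
\[
|W_1(\mu_x^G, \mu_y^G) - W_1(\mu_x^\M, \mu_y^\M)| \leq W_1(\mu_x^G, \mu_x^\M) + W_1(\mu_y^G, \mu_y^\M),
\]
so it suffices to estimate $\E[W_1(\mu_x^G, \mu_x^\M)]$, since the $y$ term is structurally identical.

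To bound this quantity, I would invoke the $\infty$-OT concentration of empirical measures on compact manifolds, exactly the tool encoded in item 3 of Assumption \ref{assump}: with very high probability there exists a transport plan from $\mu$ to $\mu_n$ that displaces mass by at most $\log(n)^{p_m} / n^{1/m}$. Restricting this global plan to the ball $B_\M(x,\veps)$ and renormalizing to account for the random cardinality $|B_G(x,\veps)|$, which concentrates around $n \mu(B_\M(x,\veps)) \sim n \veps^m$ by Bernstein's inequality, yields
\[
W_1(\mu_x^G, \mu_x^\M) \lesssim \frac{\log(n)^{p_m}}{n^{1/m}}
\]
on a high-probability good event, together with a crude $O(\veps)$ bound on the complementary event. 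The main obstacle is making the renormalization step rigorous: the restriction of a globally optimal plan will not have uniform marginals on the ball, so one must redistribute a small amount of mass along geodesics inside $B_\M(x,\veps)$, using the smoothness of $\mu$'s density and the lower bound $\inf_{x \in \M} \mu(B_\M(x,\veps)) \gtrsim \veps^m$ to control the additional cost.

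Assembling these estimates and taking expectations yields an overall bound of the shape
\[
\E\!\left[\left| \tfrac{1}{\delta^2}\kappa_G - \tfrac{1}{2(m+2)} \Ric_x(v) \right|\right] \lesssim \frac{\veps^2}{\delta} + \frac{\veps^3}{\delta^2} + \frac{\log(n)^{p_m}}{\delta^3\, n^{1/m}},
\]
where the last term captures the sampling error through the factor $1/(\delta \cdot \delta^2)$, and each summand vanishes as $n \to \infty$: the first two under $\beta \leq \alpha$, and the last precisely under $\alpha + 2\beta < 1/m$, which is exactly the scaling constraint in the hypothesis. The delicate ingredient is the renormalized $W_1$ estimate rather than Ollivier's continuum theorem, whose application here is essentially turnkey.
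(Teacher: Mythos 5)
Proposition~\ref{thm:consistency-asymp} is cited from van der Hoorn et al.~\citep{van2021ollivier} and is not proved in this paper, so there is no in-paper proof to compare against; the closest analogue is the paper's own Theorem~\ref{thm:consistency-non-asymp} (proved in section~\ref{sec:LocalConsistency}), and your two-piece decomposition---continuum piece via Ollivier's Theorem~\ref{thm:ollivier1} plus a discrete-to-continuum piece via the $W_1$ triangle inequality and the $\infty$-OT concentration bound---does mirror the structure used there. The discrete piece as you sketch it is essentially how the paper controls $W_1(\mu_x^G,\mu_x^\M)$.

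The continuum piece, however, has a genuine gap that you flag but do not resolve. You feed Ollivier balls of radius $\veps$ into Theorem~\ref{thm:ollivier1}, producing $\kappa_\M(x,y) = \frac{\veps^2}{2(m+2)}\Ric_x(v) + O(\veps^2\delta + \veps^3)$, and then divide by $\delta^2$. This leaves the prefactor $\veps^2/\delta^2$ in front of $\Ric_x(v)$. The hypothesis $0<\beta\leq\alpha$ gives $\veps=n^{-\alpha}\leq n^{-\beta}=\delta$, so whenever $\beta<\alpha$ one has $\veps^2/\delta^2 \to 0$, hence $\kappa_\M/\delta^2 \to 0$, not $\frac{1}{2(m+2)}\Ric_x(v)$. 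Your parenthetical that the normalization ``anticipates'' $\veps\asymp\delta$ is exactly the content the proof must establish, not assume; as written the argument only closes on the measure-zero slice $\alpha=\beta$ of the allowed exponent range. To fix this you need either to identify the ball radius that is actually being squared in the denominator of the cited proposition (so that the $\delta^{-2}$ normalization is consistent with the Markov kernel) before applying Theorem~\ref{thm:ollivier1}, or to show $\veps^2/\delta^2\to 1$, which the stated scaling does not give. Two secondary issues confirm the bookkeeping is off: your last summand $\log(n)^{p_m}/(\delta^3 n^{1/m})$ vanishes iff $3\beta<1/m$, which (given $\alpha\geq\beta$) is strictly weaker than the hypothesis $\alpha+2\beta<1/m$, so your claim that this condition is recovered ``precisely'' is inaccurate; and the crude bound on the complementary low-probability event must control $|\kappa_G|/\delta^2$, which can be of order $\mathrm{diam}(\M)/\delta^3$ (since $\kappa_G$ is only bounded above by $1$ and below by $1-W_{1,G}/\delta$), not $O(\veps)$ as you state.
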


\begin{remark}
It is worth pointing out that in the definition of $\kappa_G$ in \citep{van2021ollivier} we have $d_G = d_{\M}$, whereas here we work with $d_{G, \M}$. We will be able to obtain global lower bounds for our induced curvature using $d_{G,\M}$, while we cannot derive those lower bounds for $d_\M$.
 \end{remark}
 %
 The authors of \citep{van2021ollivier} provide a second analysis which only requires access to pairwise Euclidean distances in the ambient space, but this result relies on a crucial auxiliary result, which is currently only available in dimension 2. 

 In what follows we address Problem \ref{prob1} and  provide \emph{non-asymptotic} error bounds for the approximation of $\M's$ Ricci curvature from our notions of discrete Ricci curvature. Our first result is stated in the setting where we have access to $d_\M$.

%

 \begin{theorem}[Pointwise consistency (access to geodesic distances)] \label{thm:consistency-non-asymp}
Let $\M$ be an $m$-dimensional, compact, boundaryless, connected, smooth manifold embedded in $\R^d$. Let $\X= \{ x_1, \dots, x_n \}$ consist of i.i.d. samples from the uniform distribution on $\M$. Let $w_\veps= w_{\veps, \M}$ be defined according to \eqref{eqn:WeightsManifold}, let $d_G= d_{G,\M} $ be the metric defined in \eqref{eq:DistancefromGeodesic} for a profile function $\psi$ satisfying Assumptions \ref{assumpPsi}, and let $\kappa_G$ be the Ollivier Ricci curvature  induced by these choices of Ollivier balls and metric (see \eqref{eqn:GraphCurvature}).

Under Assumption \ref{assump}, for every $s >1$ there is a constant $C$ such that, with probability at least $1- C n^{-s}$, we have
 \begin{equation}
     \left| \frac{\kappa_G(x,y)}{\veps^2} -   \frac{ \Ric_x(v)}{2(m+2)} \right| \leq  C \left( \veps  +   \frac{\log(n)^{p_m}}{ n^{1/m}\veps^3} \right),
 \end{equation}
 for all $x,y \in \X$ satisfying $2\delta_0 \leq d_\M(x,y) \leq \frac{1}{2} \delta_1$, where $\delta_0$ and $\delta_1$ are defined in \eqref{def:Delta_0Delta_1}. In the above, we use $v$ to denote the vector $\frac{\log_x(y)}{|\log_x(y)|} \in T_x \M$, and $p_m =3/4$ if $m=2$, while $p_m=1/m$ if $m\geq 3$. In case $m=1$, the above statement continues to hold after substituting $\frac{(\log(n))^{p_m}}{n^{1/m}}$ with $\frac{(\log(n))^{1/2}}{n^{1/2}}$.
 \end{theorem}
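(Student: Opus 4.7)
My plan is a two-step reduction: first, compare $\kappa_G(x,y)$ to its continuum analogue $\kappa_\M(x,y)$ (Definition~\ref{def:OllivierBalls}), and then invoke Theorem~\ref{thm:ollivier1} to replace $\kappa_\M$ with $\frac{\veps^2}{2(m+2)}\Ric_x(v)$. Under the hypothesis $d_\M(x,y) \leq \delta_1/2 = O(\veps)$, Ollivier's theorem gives $|\kappa_\M(x,y) - \frac{\veps^2}{2(m+2)}\Ric_x(v)| = O(\veps^3)$, which after dividing by $\veps^2$ produces the $O(\veps)$ term in the stated error. It therefore suffices to prove
\[
|\kappa_G(x,y) - \kappa_\M(x,y)| \leq C\veps^3 + \frac{C\log(n)^{p_m}}{n^{1/m}\veps}.
\]

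For the denominators, Proposition~\ref{prop:ComparisonMetrics} (combining \eqref{eq:Comparisson1} and \eqref{eq:Comparisson4}) together with the hypothesis $2\delta_0 \leq d_\M(x,y) \leq \delta_1/2$ forces $d_{G,\M}(x,y) = d_\M(x,y)$ exactly. The problem thus reduces to controlling
\[
|W_{1,G}(\mu_x^G, \mu_y^G) - W_1(\mu_x^\M, \mu_y^\M)| \leq C\veps^4 + \frac{C\log(n)^{p_m}}{n^{1/m}}.
\]
The main probabilistic tool will be an $\infty$-optimal-transport map $T$ from $\mu$ to $\mu_n$ (Theorem~\ref{thm:inftyOTEmpiricalBound}), whose displacement $\eta_n := \|T-\mathrm{Id}\|_\infty$ satisfies $\eta_n \leq C\log(n)^{p_m}/n^{1/m}$ with probability $1-Cn^{-s}$. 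To prove the upper bound on $W_{1,G}$, I will push the optimal plan $\pi_\M$ for $W_1(\mu_x^\M, \mu_y^\M)$ forward by $(T,T)$, correct its marginals, and turn it into an admissible plan for $W_{1,G}(\mu_x^G, \mu_y^G)$; the pointwise cost changes by at most $2\eta_n$ plus the gap between $d_{G,\M}$ and $d_\M$, which is $O(\veps^3)$ on the relevant support. The reverse inequality will be obtained symmetrically by transporting an optimal plan for $W_{1,G}$ back through $T^{-1}$ to construct an admissible plan for $W_1$.

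The principal technical obstacle is correcting the marginal mismatch introduced by $T$. The pre-image under $T$ of $B_G(x,\veps)\cap\X$ differs from $B_\M(x,\veps)$ only in a tubular shell of width $O(\eta_n)$ around $\partial B_\M(x,\veps)$, whose relative $\mu_x^\M$-mass is $O(\eta_n/\veps)$; standard concentration for $\mu_n(B_\M(x,\veps))$ around $\mu(B_\M(x,\veps))$ controls the discrepancy in normalizing constants at the same rate. Rerouting this excess mass within the empirical measure at an extra cost of $O(\veps)$ per unit contributes at most $O(\eta_n) = O(\log(n)^{p_m}/n^{1/m})$ to $|W_1 - W_{1,G}|$. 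A separate, smaller contribution comes from the ``bent'' region where $\tilde d_{G,\M}(\tilde x, \tilde y) \neq d_\M(\tilde x, \tilde y)$: this arises only when $d_\M(\tilde x, \tilde y) \leq \delta_0 = c_0 \veps$, and for $c_0$ small (as permitted by Assumption~\ref{assumpPsi} and Assumption~\ref{assump}) the associated mass is $O(c_0^m)$ and its cost contribution is absorbed into $O(\veps^4)$. The hardest part of the argument is performing the mass-rerouting explicitly and uniformly over all admissible $x,y$; once this bookkeeping is carried out, combining the two-sided bounds on $|W_1 - W_{1,G}|$ with the denominator identity $d_{G,\M} = d_\M$ and the Ollivier estimate yields the theorem.
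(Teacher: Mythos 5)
Your high-level reduction is sound: since $2\delta_0 \le d_\M(x,y) \le \delta_1/2$ forces $d_{G,\M}(x,y)=d_\M(x,y)$ via Proposition~\ref{prop:ComparisonMetrics}, the theorem does reduce to $|W_{1,G}(\mu_x^G,\mu_y^G) - W_1(\mu_x^\M,\mu_y^\M)| \lesssim \veps^4 + W_\infty(\mu,\mu_n)$, and Theorem~\ref{thm:ollivier1} then handles the rest. But your plan departs from the paper's in both directions and one of those departures has a genuine gap.

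For the upper bound on $W_{1,G}$ you propose to push the \emph{abstract} optimal plan $\pi_\M$ forward by $(T,T)$. This is where the gap is. The bent region is where the pushed-forward pair satisfies $d_\M(T\tilde x, T\tilde y) < \delta_0$, so that $\tilde d_{G,\M}(T\tilde x, T\tilde y)=\delta_0\psi(d_\M(T\tilde x, T\tilde y)/\delta_0)$ exceeds $d_\M(T\tilde x, T\tilde y)$ by as much as $\delta_0\psi(0)=c_0\psi(0)\veps$. Your claim that this region carries $\pi_\M$-mass $O(c_0^m)$ is not justified for an abstract $W_1$-optimal plan: for $W_1$ (unlike $W_p$, $p>1$) it is optimal to leave the common mass of $\mu_x^\M$ and $\mu_y^\M$ fixed, and when $d_\M(x,y)$ is near $2\delta_0\ll\veps$ the balls overlap substantially, so a constant fraction of the plan can sit at distances in $[0,\delta_0]$. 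Moreover, even granting a mass bound of $O(c_0^m)$ with $c_0$ a fixed constant, the per-unit gap is $\Theta(c_0\veps)$, so the total contribution is $\Theta(c_0^{m+1}\veps)$, not $O(\veps^4)$; after dividing by $d_\M(x,y)\veps^2 \sim \veps^3$ this contributes $\Theta(1/\veps^2)$ to the curvature error, which is fatal. The paper sidesteps this by never using the abstract optimizer: it pushes forward the \emph{explicit} Levi-Civita map $\F$ from \eqref{eqn:DefG}, for which every displacement equals $d_\M(x,y)(1+O(\veps^2))\ge 2\delta_0 - O(\veps^3)$, so no mass enters the bent region at all; this is exactly the estimate \eqref{aux:Main1Case2} obtained in Case~2 of the proof of Theorem~\ref{thm:GlobalBounds1}, and your argument should be repaired along the same lines.

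For the lower bound on $W_{1,G}$ your proposal is to transport the discrete optimal plan back through $T^{-1}$ and correct marginals. This direction is safe from the bent-region issue (since $d_\M \le d_{G,\M}$ by \eqref{eq:Comparisson1}), but it is substantially more work than the paper's argument, because $T$ is not injective and $T^{-1}$ must be handled via a disintegration, after which the lifted marginals are only approximately $\mu_x^\M, \mu_y^\M$. The paper avoids all of this by going dual: the Kantorovich potential $f$ of \eqref{eqn:functionF} is $1$-Lipschitz for $d_\M$, hence also for $d_{G,\M}$ by \eqref{eq:Comparisson1}, so $\int f\,d\mu_y^G - \int f\,d\mu_x^G \le W_{1,G}(\mu_x^G,\mu_y^G)$; replacing $\mu_x^G$ by $\mu_x^\M$ (and $\mu_y^G$ by $\mu_y^\M$) costs only $W_1(\mu_x^G,\mu_x^\M)\le C\,W_\infty(\mu,\mu_n)$ by Lemma~\ref{lem:add1}, and the resulting lower bound on $W_{1,G}$ feeds directly into \eqref{eq:FormulafRic}. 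If you insist on a purely primal argument you should at least be aware that this is the more laborious route; the dual one gives the matching bound with essentially no bookkeeping.
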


A second result, which only assumes access to a sufficiently sharp data-driven approximation $\hat{d}_g$ of the geodesic distance $d_\M$ (see Assumption \ref{assump:hatdg}), is stated below. 

 \begin{theorem}[Pointwise consistency (approximate geodesic distances)]\label{thm:consistency-non-asymp-geo}
Let $\M$ be an $m$-dimensional, compact, boundaryless, connected, smooth manifold embedded in $\R^d$. Let $\X= \{ x_1, \dots, x_n \}$ consist of i.i.d. samples from the uniform distribution on $\M$. Let $w_\veps= w_{\veps, \X}$ be defined according to \eqref{eqn:WeightsDataDriven}, for a data-driven approximation $\hat{d}_g$ of $d_\M$ satisfying Assumption \ref{assump:hatdg}. Let $d_G= d_{G,\X} $ be the metric defined in \eqref{eq:DistancefromDataDriven} for a profile function $\psi$ satisfying Assumptions \ref{assumpPsi}, and let $\kappa_G$ be the Ollivier Ricci curvature  induced by these choices of Ollivier balls and metric (see \eqref{eqn:GraphCurvature}).

Under Assumption \ref{assump}, for every $s >1$ there is a constant $C$ such that, with probability at least $1- C n^{-s} - C\exp(- \zeta(n, \beta,\veps))$, we have
 \begin{equation}
      \left| \frac{\kappa_G(x,y)}{\veps^2} -   \frac{ \Ric_x(v)}{2(m+2)} \right|  \leq  C \left( \beta + \veps  +  \frac{\log(n)^{p_m}}{ n^{1/m}\veps^3}  \right),
 \end{equation}
 for all $x,y \in \X$ satisfying $3\delta_0 \leq \hat{d}_g(x,y) \leq \frac{1}{3} \delta_1$.  The quantities $\delta_0$, $\delta_1$, $v$ and $p_m$ are as in Theorem \ref{thm:consistency-non-asymp}. In case $m=1$, the above statement continues to hold after substituting $\frac{(\log(n))^{p_m}}{n^{1/m}}$ with $\frac{(\log(n))^{1/2}}{n^{1/2}}$.
 
 \end{theorem}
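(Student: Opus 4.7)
The plan is to reduce Theorem~\ref{thm:consistency-non-asymp-geo} to the already stated Theorem~\ref{thm:consistency-non-asymp} by controlling the discrepancy introduced when $d_\M$ is replaced by the data-driven approximation $\hat{d}_g$. Write $\kappa_G^{\X}(x,y)$ for the curvature defined via $w_{\veps,\X}$ and $d_{G,\X}$, as in the present statement, and $\kappa_G^{\M}(x,y)$ for the curvature of Theorem~\ref{thm:consistency-non-asymp}, built from $w_{\veps,\M}$ and $d_{G,\M}$. Item~1 of Assumption~\ref{assump:hatdg} forces the hypothesis $3\delta_0 \leq \hat{d}_g(x,y) \leq \tfrac{1}{3}\delta_1$ to imply $2\delta_0 \leq d_\M(x,y) \leq \tfrac{1}{2}\delta_1$ for $\veps$ and $\beta$ small enough, so Theorem~\ref{thm:consistency-non-asymp} applies to $\kappa_G^{\M}(x,y)$ and yields the required approximation with error $C(\veps + \log(n)^{p_m}/(n^{1/m}\veps^3))$. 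It therefore suffices to prove that, on the high-probability event provided by Assumption~\ref{assump:hatdg} together with Theorem~\ref{thm:inftyOTEmpiricalBound},
\begin{equation*}
|\kappa_G^{\X}(x,y) - \kappa_G^{\M}(x,y)| \leq C(\beta\veps^2 + \veps^3),
\end{equation*}
uniformly over the admissible pairs; dividing by $\veps^2$ produces the additional $\beta$ term in the statement.

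First, I would compare the two Ollivier balls $B_{G,\X}(x,\veps)$ and $B_{G,\M}(x,\veps)$. Item~1 of Assumption~\ref{assump:hatdg} gives $|\hat{d}_g(x,z) - d_\M(x,z)| \leq C(\beta\veps^3 + \veps^4)$ whenever either distance sits in a window of width $O(\veps)$ around $\veps$, while item~2 rules out pathologies at very small scales. Thus the symmetric difference of the two balls is contained in a geodesic annulus at $x$ of thickness $O(\beta\veps^3 + \veps^4)$, whose Riemannian volume is at most $C(\beta\veps^2 + \veps^3)\vol(B_\M(x,\veps))$. The $\infty$-OT control between $\mu$ and $\mu_n$, supplied by item~3 of Assumption~\ref{assump} via Theorem~\ref{thm:inftyOTEmpiricalBound}, transfers this volume estimate to the empirical measure, so the number of samples in the annulus is at most $C(\beta\veps^2 + \veps^3)$ times the size of either ball with very high probability. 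Consequently, the uniform measures supported on $B_{G,\X}(x,\veps)$ and on $B_{G,\M}(x,\veps)$ differ in total variation by at most $C(\beta\veps^2 + \veps^3)$.

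Second, I would compare the two ground metrics. Proposition~\ref{prop:ComparisonMetrics} shows that $d_{G,\X}$ and $d_{G,\M}$ both agree with $d_\M$ up to a multiplicative factor $1 + O(\beta\veps^2 + \veps^3)$ on pairs of points lying in one of the Ollivier balls, so $|d_{G,\X}(z,z') - d_{G,\M}(z,z')| \leq C(\beta\veps^3 + \veps^4)$ uniformly on the supports of the measures in play. Combined with the total variation bound from the previous step, and the fact that those supports have diameter $O(\veps)$ under either metric, one obtains
\begin{equation*}
|W_{1,G,\X}(\mu_x^{G,\X}, \mu_y^{G,\X}) - W_{1,G,\M}(\mu_x^{G,\M}, \mu_y^{G,\M})| \leq C(\beta\veps^3 + \veps^4).
\end{equation*}
Dividing by $d_{G,\X}(x,y) = O(\veps)$ and using $d_{G,\X}(x,y)/d_{G,\M}(x,y) = 1 + O(\beta\veps^2 + \veps^3)$ delivers the reduction.

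\textbf{Main obstacle.} The delicate point is the transport contribution from samples near the boundary of the Ollivier balls: a point lying in $B_{G,\X}$ but not in $B_{G,\M}$, or vice versa, must be moved a distance of order $\veps$, which is much larger than the pointwise accuracy $\beta\veps^3 + \veps^4$ of $\hat{d}_g$. The cancellation that makes the argument work comes from the fact that the fraction of such boundary samples is itself only $O(\beta\veps^2 + \veps^3)$, so the product of the two factors lands exactly at the target level $\beta\veps^3 + \veps^4$. Converting the continuum volume estimate for the annulus into a reliable discrete count, simultaneously for every admissible $x \in \X$, is precisely where the $\infty$-OT bound of Assumption~\ref{assump} is essential; the probabilistic cost is a union bound over $x \in \X$, absorbed into the $n^{-s}$ tail.
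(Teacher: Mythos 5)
Your reduction strategy — compare $\kappa_G^{\X}$ to $\kappa_G^{\M}$ and then apply Theorem~\ref{thm:consistency-non-asymp} — is appealing, but the step where you claim the two ground metrics agree additively to $O(\beta\veps^3 + \veps^4)$ on the supports of the measures is not justified, and in fact cannot hold in general. The two-sided multiplicative agreement of $d_{G,\X}$ and $d_{G,\M}$ with $d_\M$ given in Proposition~\ref{prop:ComparisonMetrics} (inequalities~\eqref{eq:Comparisson3} and~\eqref{eq:Comparisson4}) is only available for pairs $z,z'$ with $2\delta_0 \leq d_\M(z,z') \leq \tfrac{1}{2}\delta_1$; the supports of $\mu_x^G, \mu_y^G$ contain pairs at much smaller separation. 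At sub-$c\veps$ scales, item~1 of Assumption~\ref{assump:hatdg} does not apply, and item~2 only guarantees $d_\M \leq \tfrac{4}{3}c\veps$ when $\hat{d}_g \leq c\veps$, so $\hat{d}_g$ and $d_\M$ can differ there by $O(\veps)$, not $O(\beta\veps^3 + \veps^4)$. Consequently your middle triangle-inequality term, the comparison of $W_{1,G,\X}(\mu_x^{G,\M},\mu_y^{G,\M})$ with $W_{1,G,\M}(\mu_x^{G,\M},\mu_y^{G,\M})$, is left without control: the optimal coupling may couple points at small $d_\M$-distance, where no comparison between the two graph metrics is available.

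The paper sidesteps this by never attempting a symmetric metric comparison. For the lower bound on $\kappa_G^{\X}$ it constructs the explicit coupling $\Pi$ (the parallel-transport $\F$-map composed with the $\infty$-OT map, Case 2 of the proof of Theorem~\ref{thm:GlobalBounds2}), whose support by design lies at scale $\approx d_\M(x,y)$, so that item~1 of Assumption~\ref{assump:hatdg} applies along it. For the upper bound it uses the Kantorovich--Rubinstein functional $f$ (which is $1$-Lipschitz for $d_\M$) together with the one-sided inequality~\eqref{eq:Comparisson2}, which holds for \emph{all} pairs, to conclude that $f$ is $(1 + C(\beta\veps^2 + \veps^3))$-Lipschitz for $d_{G,\X}$. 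Both arguments only ever need a one-sided metric comparison valid everywhere or a two-sided comparison restricted to a support the proof controls. If you want to keep the reduction flavor, you would need to replace the generic Wasserstein comparison by essentially these two devices — a bespoke near-optimal coupling with controlled support on one side, and the dual functional with~\eqref{eq:Comparisson2} on the other — at which point you are reproducing the paper's argument rather than shortcutting it.

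Your first step (controlling the symmetric difference of the Ollivier balls through the order-four accuracy of $\hat{d}_g$ and the $W_\infty$ bound) is sound, and that is indeed the mechanism by which the paper bounds $W_1(\mu_x^G, \mu_x^{\M})$; one minor imprecision is that the annulus thickness, and hence the TV bound, must absorb a $W_\infty(\mu,\mu_n)/\veps$ contribution, not only $\beta\veps^2 + \veps^3$, but this is harmless since $W_\infty/\veps^3$ already appears in the target error.
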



 \color{black}

\subsection{Global Curvature Lower Bounds}
\label{sec:MainGlobal}

We turn our attention to Problem~\ref{prob2} and state two theorems relating global lower bounds for $\Ric$ and $\kappa_G$. In section \ref{sec:Applications}, we complement our analysis with a discussion of applications of the curvature lower bounds stated below.

In our first result we assume access to the geodesic distance $d_\M$.
\begin{theorem}[Global lower bounds]
Let $\M$ be an $m$-dimensional, compact, boundaryless, connected, smooth manifold embedded in $\R^d$ with Ricci curvature lower bounded by $2(m+2) K$. Let $\X= \{ x_1, \dots, x_n \}$ consist of i.i.d. samples from the uniform distribution on $\M$. Let $w_\veps= w_{\veps, \M}$ be defined according to \eqref{eqn:WeightsManifold}, let $d_G= d_{G,\M} $ be the metric defined in \eqref{eq:DistancefromGeodesic} for a profile function $\psi$ satisfying Assumptions \ref{assumpPsi}, and let $\kappa_G$ be the Ollivier Ricci curvature  induced by these choices of Ollivier balls and metric (see \eqref{eqn:GraphCurvature}).

Under Assumption \ref{assump}, for every $s >1$ there is a constant $C$ such that, with probability at least $1- C n^{-s}$, we have
\begin{equation}
   \frac{\kappa_G(x,y)}{\veps^2} \geq  \min \left\{ s_K K  - C\left (\veps +  \frac{\log(n)^{p_m}}{ n^{1/m}\veps^3} \right), \frac{1}{2\veps^2} \right\}, \quad \forall x, y \in \X,
\label{eqn:LowerboundTheorem1}
\end{equation}
where the factor $s_K$ is given by
\[ s_K := \begin{cases} \frac{\psi'(0) c_0}{12c_1 C_\M}  & \text{ if } K \geq 0 \\ \frac{c_1}{c_0 \psi(0)}  &\text{ if } K <0, \end{cases} \]
where 
$c_0, c_1$ are as in \eqref{def:Delta_0Delta_1}, and $C_\M$ is a manifold dependent constant that in particular implies  $\frac{c_0}{12 c_1 C_\M} \leq 1$. Also, $p_m =3/4$ if $m=2$, while $p_m=1/m$ if $m\geq 3$. In case $m=1$, the above statement continues to hold after substituting $\frac{(\log(n))^{p_m}}{n^{1/m}}$ with $\frac{(\log(n))^{1/2}}{n^{1/2}}$.
\label{thm:GlobalBounds1}
\end{theorem}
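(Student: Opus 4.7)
The natural starting point is Lemma \ref{lem:CurvatureGeodesicProp}, which reduces the global bound to establishing $\kappa_G(x,y)/\veps^2 \geq \min\{s_K K - \mathrm{err},\, 1/(2\veps^2)\}$ only for pairs $(x,y) \in \X$ with $d_G(x,y) = \tilde{d}_{G,\M}(x,y)$; such pairs necessarily satisfy $d_\M(x,y) \leq \delta_1$. The plan is then to split these ``local'' pairs into two complementary regimes (small-scale versus manifold-like) and to deploy a different technique in each.

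In the manifold-like regime $d_\M(x,y) \geq 2\delta_0$ (where $d_\M(x,y)/\delta_0 \geq 1$ so $\tilde{d}_{G,\M}(x,y) = d_\M(x,y)$), I would invoke the pointwise estimate of Theorem \ref{thm:consistency-non-asymp} on $[2\delta_0, \delta_1/2]$ to obtain $\kappa_G(x,y)/\veps^2 \geq K - C(\veps + \log(n)^{p_m}/(n^{1/m}\veps^3))$, and rerun the same proof strategy---Ollivier's manifold estimate (Theorem \ref{thm:ollivier1}) composed with an empirical-measure approximation of $W_1(\mu_x^\M,\mu_y^\M)$ by $W_{1,G}(\mu_x^G,\mu_y^G)$ using the $\infty$-OT bound of Theorem \ref{thm:inftyOTEmpiricalBound}---to cover the upper-boundary interval $(\delta_1/2, \delta_1]$. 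Since $s_K \leq 1$ for $K \geq 0$ and $s_K \geq 1$ for $K < 0$, the resulting bound dominates the claimed $s_K K - \mathrm{err}$. The factor $c_1/(c_0\psi(0))$ in $s_K$ for $K < 0$ arises from this regime: the most negative graph curvature is controlled using the ratio between the maximal allowed separation $\delta_1 = c_1\veps$ and the minimum possible value $\delta_0\psi(0)$ of $d_G$ on distinct pairs.

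In the small-scale regime $d_\M(x,y) < 2\delta_0$, pointwise consistency does not apply, but the Ollivier balls $B_G(x,\veps)$ and $B_G(y,\veps)$ are nearly coincident: their symmetric difference has $\mu$-measure $O(d_\M(x,y)/\veps)$, which transfers to the empirical measure $\mu_n$ via the $\infty$-OT bound. An explicit coupling of $\mu_x^G$ with $\mu_y^G$ that is the identity on the intersection and moves only the residual mass at $d_G$-cost $\lesssim \veps$ per unit then yields $W_{1,G}(\mu_x^G,\mu_y^G) \lesssim d_\M(x,y) + \mathrm{err}$. Combined with the profile-enforced lower bound $d_G(x,y) \geq \delta_0\psi(0)$, this produces $\kappa_G(x,y) \geq 1/2$ (the $1/(2\veps^2)$ branch) when $d_\M(x,y)$ is much smaller than $\delta_0\psi(0)$. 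For $d_\M(x,y)$ approaching $2\delta_0$ one uses the Taylor expansion $\psi(u) \approx \psi(0) + \psi'(0) u$ near zero to bridge smoothly into the manifold-like regime, and the factor $\psi'(0) c_0/(12 c_1 C_\M)$ in $s_K$ for $K \geq 0$ emerges from optimizing this bridge against the Ricci-based lower bound at the transition.

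The main technical obstacle I anticipate is tracking constants cleanly across this transition between ``complete-graph-like'' and manifold-like behavior, where the bending of $\psi$ couples the small-scale Wasserstein analysis to the Ricci-based bound; extracting the correct $\psi'(0)$-dependence for $K \geq 0$ and the correct $c_1/(c_0\psi(0))$-dependence for $K < 0$ both require delicate quantitative bookkeeping. A secondary issue---ensuring the empirical-measure error applies uniformly over all $\binom{n}{2}$ pairs---is handled by a union bound absorbed into the $Cn^{-s}$ probability.
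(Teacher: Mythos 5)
Your overall architecture matches the paper's: reduce via Lemma \ref{lem:CurvatureGeodesicProp} to local pairs, treat very small separations by a ``fix most mass'' coupling exploiting ball overlap, and treat larger separations by a transport-based argument combined with the $\infty$-OT bound of Theorem \ref{thm:inftyOTEmpiricalBound}. However, there is a genuine gap in the transition zone.

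The crude coupling that leaves the overlap $B_G(x,\veps)\cap B_G(y,\veps)$ fixed gives $W_{1,G}(\mu_x^G,\mu_y^G)\lesssim \veps\cdot C_\M\, d_\M(x,y)/\veps$, and dividing by $d_G(x,y)\geq \delta_0\psi(0)$ yields $\kappa_G \geq 1/2$ only when $d_\M(x,y)\lesssim \psi(0)\delta_0/C_\M$, a threshold strictly smaller than $\delta_0$. On the other side, Theorem \ref{thm:consistency-non-asymp} (and Proposition \ref{prop:ComparisonMetrics}, which it relies on) only applies once $d_\M(x,y)\geq 2\delta_0$. Your split at $2\delta_0$ therefore leaves the interval $\bigl[\psi(0)\delta_0/(12C_\M),\, 2\delta_0\bigr)$ uncovered by either tool. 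This is precisely the paper's Case 2 regime, where the argument is not a ``Taylor expansion of $\psi$ near zero'': the paper builds a coupling $\pi_1^G$ by gluing $(\mathrm{Id}\times T_n)$, $(\mathrm{Id}\times\F)$, $(\mathrm{Id}\times T_n)$ with $\F=T_y\circ\mathcal P$ from the proof of Theorem \ref{thm:ollivier1}, and then Taylor-expands $\psi$ \emph{around the base value} $d_\M(x,y)/\delta_0$, producing the factor $\psi'(d_\M(x,y)/\delta_0)\,\tfrac{d_\M(x,y)}{\delta_0\psi(d_\M(x,y)/\delta_0)}$ in front of $\Ric_x(v)$. It is the worst case of this factor over the Case~2 range (for $K\geq 0$: lower bounding it using $\psi'\geq\psi'(0)$ and $\psi\leq 1$ at small arguments; for $K<0$: upper bounding it by $c_1/(c_0\psi(0))$) that produces $s_K$, not an ``optimization of a bridge.'' Your attribution of the $K<0$ factor to the manifold-like regime is also off: for $d_\M(x,y)\geq\delta_0$ one has $\psi(d_\M/\delta_0)=d_\M/\delta_0$ and $\psi'=1$, so the coefficient is exactly $1$ there; the loss only arises when $d_\M(x,y)<\delta_0$, i.e.\ again in the transition zone.

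A secondary gap: for $d_\M(x,y)\in(\delta_1-2\veps,\delta_1]$, the transport coupling via $\F$ may produce pairs $(\tilde x,\tilde y)$ with $d_\M(\tilde x,\tilde y)>\delta_1$, where $\tilde d_{G,\M}$ is infinite, so the direct Case~2 argument fails; the paper handles this separately (Case~3) by inserting the geodesic midpoint $\overline x$ and using the $W_{1,G}$ triangle inequality through $\mu_{\overline x}^G$. ``Rerun the same proof strategy'' does not identify this obstacle or its fix.
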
 
\nc

\begin{remark}
The rescaling of $\kappa_G$ by $\veps^2$ is the right scaling when passing to the continuum limit, i.e. $n \rightarrow \infty$ and $\veps\rightarrow 0$. This can already be seen from our pointwise consistency results in section \ref{sec:MainPointwiseState}, but it can also be interpreted as a way to properly rescale the time variable indexing the discrete-time random walk on $G$. In particular, we will see in section \ref{sec:LaplaciansandRegression} that \eqref{eqn:LowerboundTheorem1} implies novel contraction properties for the heat flow (continuous time) on $G$ when we assume the manifold $\M$ to be positively curved.
\end{remark}



\begin{remark}
   The factor $s_K$ makes the lower bound in \eqref{eqn:LowerboundTheorem1}  looser than the lower bound for $\M$'s Ricci curvature:  when $K\geq 0$, $s_K$ is necessarily smaller than one (but still strictly positive, since $\psi'(0)>0$), whereas when $K <0$ the quantity $s_K$ is greater than one. 
   The appearance of $s_K$ is due to the fact that in our analysis we must glue together two estimates that hold at different length-scales, and, in doing so, we end up with a suboptimal bound. Presumably, our analysis can be sharpened, but this aim is out of the scope of this paper.  
\end{remark}

A second result, which only assumes access to a sufficiently sharp data-driven approximation of the geodesic distance $d_\M$, is stated below.

\begin{theorem}[Consistency of global bounds 2]
\label{thm:GlobalBounds2}
Let $\M$ be an $m$-dimensional, compact, boundaryless, connected, smooth manifold embedded in $\R^d$ with Ricci curvature lower bounded by $2(m+2) K$. Let $\X= \{ x_1, \dots, x_n \}$ consist of i.i.d. samples from the uniform distribution on $\M$. Let $w_\veps= w_{\veps, \X}$ be defined according to \eqref{eqn:WeightsDataDriven}, for a data-driven approximation $\hat{d}_g$ of $d_\M$ satisfying Assumption \ref{assump:hatdg}. Let $d_G= d_{G,\X} $ be the metric defined in \eqref{eq:DistancefromDataDriven} for a profile function $\psi$ satisfying Assumptions \ref{assumpPsi}, and let $\kappa_G$ be the Ollivier Ricci curvature  induced by these choices of Ollivier balls and metric (see \eqref{eqn:GraphCurvature}).

Under Assumption \ref{assump}, for every $s >1$ there is a constant $C$ such that, with probability at least $1- C n^{-s} - C\exp(- \zeta(n, \beta,\veps))$, we have
\begin{equation}
   \frac{\kappa_G(x,y)}{\veps^2} \geq  \min \left\{ s_K K  - C\left (\veps + \beta+  \frac{\log(n)^{p_m}}{ n^{1/m}\veps^3}\right), \frac{1}{4\veps^2} \right\}, \quad \forall x, y \in \X,
\label{eqn:LowerboundTheorem2}
\end{equation}
where the factor $s_K$ is as in Theorem \ref{thm:GlobalBounds1}, and $p_m =3/4$ if $m=2$, while $p_m=1/m$ if $m\geq 3$. In case $m=1$, the above statement continues to hold after substituting $\frac{(\log(n))^{p_m}}{n^{1/m}}$ with $\frac{(\log(n))^{1/2}}{n^{1/2}}$.

\end{theorem}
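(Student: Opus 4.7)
The strategy mirrors that of Theorem \ref{thm:GlobalBounds1}, with extra work needed to accommodate the data-driven metric $d_{G,\X}$ and the approximate distance $\hat d_g$. The first reduction is to invoke Lemma \ref{lem:CurvatureGeodesicProp}: it suffices to establish \eqref{eqn:LowerboundTheorem2} for pairs $x, y \in \X$ satisfying $d_{G,\X}(x,y) = \tilde d_{G,\X}(x,y) \leq \delta_1$, which in particular forces $\hat d_g(x,y) \leq c_1 \veps$. We then dichotomize such pairs by the scale of $\hat d_g(x,y)$ relative to $\delta_0$: a \emph{small-scale regime}, where $\hat d_g(x,y)$ is comparable to or smaller than a fixed small multiple of $\delta_0 = c_0 \veps$, and a \emph{manifold regime}, where $\hat d_g(x,y)$ is bounded below by such a multiple. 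The first regime will produce the $1/(4\veps^2)$ cap in \eqref{eqn:LowerboundTheorem2} and the second will produce the $s_K K - C(\beta + \veps + \cdots)$ term.

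For the small-scale regime the plan is to exploit the bending of $\psi$ encoded in Assumption \ref{assumpPsi}: since $\psi(0) > 0$ and $\psi$ is convex and non-decreasing with $\psi'(0) > 0$, we have $d_{G,\X}(x,y) \geq \delta_0 \psi(0) \gtrsim \veps$ for any pair of distinct points. Using item 2 of Assumption \ref{assump:hatdg} to translate $\hat d_g$-smallness into $d_\M$-smallness, the balls $B_G(x,\veps)$ and $B_G(y,\veps)$ differ only on a small fraction of the ambient volume, and the $\infty$-OT concentration from item 3 of Assumption \ref{assump} (Theorem \ref{thm:inftyOTEmpiricalBound}) transfers this conclusion to the empirical measures $\mu_x^G, \mu_y^G$. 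A trivial coupling that self-matches on the intersection and transports the residual mass over distance at most $2\veps$ yields a Wasserstein estimate of the form $W_{1,G}(\mu_x^G,\mu_y^G) \leq d_{G,\X}(x,y)\left(1 - \tfrac{1}{4}\right)$, giving the $1/(4\veps^2)$ cap.

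For the manifold regime the plan is to build a coupling via the Levi-Civita parallelogram map $\mathcal{P}$ from \eqref{eqn:ParallelF} in the spirit of the proof of Theorem \ref{thm:ollivier1}. Concretely, lift the empirical Ollivier measures $\mu_x^G, \mu_y^G$ to their continuum counterparts on $B_\M(x,\veps), B_\M(y,\veps)$ using the coupling between $\mu_n$ and $\mu$ that realizes the $\infty$-OT distance (whose size is controlled by item 3 of Assumption \ref{assump}); couple these uniform measures through $\mathcal{P}$; and apply the Levi-Civita distance estimate recalled in Section \ref{sec:DiffGeometry} together with the Ricci lower bound $\Ric_x(v) \geq 2(m+2) K$, averaging the second-order correction $|w|^2 K(v,w)$ over $w = \log_x(\tilde x)$ in the tangent ball. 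The resulting $d_\M$-cost of the coupling is bounded above by $d_\M(x,y)(1 - \veps^2 K + O(\veps^3))$. Converting this $d_\M$-cost into the $d_{G,\X}$-cost required by $W_{1,G}$ proceeds through inequalities \eqref{eq:Comparisson1}--\eqref{eq:Comparisson3} of Proposition \ref{prop:ComparisonMetrics}, contributing the $O(\beta\veps^2 + \veps^3)$ correction arising from the approximation quality of $\hat d_g$. Dividing by $d_{G,\X}(x,y) \gtrsim \veps$ and simplifying produces the $s_K K - C(\beta + \veps + \log(n)^{p_m}/(n^{1/m}\veps^3))$ bound, with $s_K$ arising from the mismatch between the Ollivier-ball radius $\sim c_1\veps$ and the effective transport radius $\sim c_0\veps$ induced by the bending of $\psi$ near zero.

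The main obstacle will be the bookkeeping in the intermediate sub-scale of the manifold regime, where $\hat d_g(x,y)$ is comparable to $\delta_0$ and sits just below the range $[3\delta_0, \delta_1/3]$ in which Theorem \ref{thm:consistency-non-asymp-geo} applies: there one cannot invoke pointwise consistency directly, and the coupling estimate must be constructed by hand while simultaneously accounting for the profile $\psi$, the approximation error between $\hat d_g$ and $d_\M$, and the empirical-to-population transfer. This is precisely where the suboptimal constant $s_K$ is produced, as the price for gluing together the small-scale and manifold-scale estimates. A secondary burden is to ensure that all error estimates hold uniformly over $x, y \in \X$ on a single high-probability event, which is why the final guarantee carries the additive failure probabilities $C n^{-s}$ (from Theorem \ref{thm:inftyOTEmpiricalBound}) and $C \exp(-\zeta(n,\beta,\veps))$ (from Assumption \ref{assump:hatdg}).
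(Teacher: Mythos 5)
Your high-level plan matches the paper's architecture (reduce via Lemma \ref{lem:CurvatureGeodesicProp}, split by scale of $\hat d_g(x,y)$, build couplings via the Levi--Civita map $\mathcal P$ and transfer from population to empirical through the $\infty$-OT map), but there are two concrete gaps.

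First, your two-way dichotomy misses the upper boundary regime. Lemma \ref{lem:CurvatureGeodesicProp} only reduces the problem to pairs with $d_{G,\X}(x,y)=\tilde d_{G,\X}(x,y)\leq\delta_1$, which includes pairs with $\hat d_g(x,y)$ very close to $\delta_1$. In that range your Levi--Civita coupling produces pairs $(\tilde x,\tilde y)$ with $d_\M(\tilde x,\tilde y)$ (hence $\hat d_g(\tilde x,\tilde y)$, up to a small error) possibly exceeding $\delta_1$, in which case $\tilde d_{G,\X}(\tilde x,\tilde y)=+\infty$ and your Wasserstein upper bound collapses. The paper's proof of Theorem \ref{thm:GlobalBounds2} restricts its manifold-regime argument to $\hat d_g(x,y)\leq\delta_1-2\veps$, and handles the residual interval $\delta_1-2\veps\leq\hat d_g(x,y)\leq\delta_1$ as a separate Case~3 by inserting the geodesic midpoint $\overline{x}$ of $x$ and $y$, applying the triangle inequality for $W_{1,G}$, and using the Case~2 bound on each half; estimate \eqref{aux:Main2Case3} controls the error incurred by replacing $\hat d_g(x,y)$ with $\hat d_g(x,\overline x)+\hat d_g(\overline x,y)$. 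This midpoint argument, permitted because Ollivier balls and their empirical measures remain well-defined for base points outside $\X$ (Remark \ref{rem:GeneralizedOllivierBalls}), is an essential step that your plan does not include.

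Second, your proposed conversion ``from $d_\M$-cost to $d_{G,\X}$-cost via inequalities \eqref{eq:Comparisson1}--\eqref{eq:Comparisson3} of Proposition \ref{prop:ComparisonMetrics}'' does not cover the range you need and does not surface the $s_K$ factor. Inequality \eqref{eq:Comparisson3} only holds for $2\delta_0 \leq d_\M(\tilde x,\tilde y)\leq\tfrac12\delta_1$, which excludes the portion of your ``manifold regime'' where $\hat d_g(x,y)$ is comparable to a small multiple of $\delta_0$; precisely there the profile $\psi$ bends and Proposition \ref{prop:ComparisonMetrics} gives nothing useful. The paper instead directly bounds $d_{G,\X}(\tilde x,\tilde y)\leq\delta_0\psi(\hat d_g(\tilde x,\tilde y)/\delta_0)$ and Taylor-expands $\psi$ around $\hat d_g(x,y)/\delta_0$; the first-order term produces a $\psi'(\hat d_g(x,y)/\delta_0)$ multiplier on the curvature contribution, and dividing by $d_{G,\X}(x,y)\geq\delta_0\psi(0)$ is what creates the factor $s_K=\psi'(0)c_0/(12 c_1 C_\M)$ when $K\geq0$. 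Your characterization of $s_K$ as arising from a ``mismatch between the Ollivier-ball radius $\sim c_1\veps$ and the effective transport radius $\sim c_0\veps$'' is not accurate: $c_1$ enters through the upper bound $d_{G,\X}(x,y)\leq\delta_1=c_1\veps$ on the normalization, and $c_0$, $\psi'(0)$, $\psi(0)$, and $C_\M$ enter through the lower bound on $\hat d_g(x,y)$ in the Case~2 regime and through $\psi$'s behavior near zero. To close this gap you need to replace Proposition \ref{prop:ComparisonMetrics} with the explicit Taylor expansion of $\psi$, as done in \eqref{eq:Bound_pi_1DataDriven} and \eqref{aux:Main1Case2DataDriven}.
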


As we will discuss in section \ref{sec:LaplaciansandRegression}, the above curvature lower bounds provide information on the behavior of Lipschitz seminorms along the heat flow induced by the graph Laplacian associated to the RGG $G=(\X, w_\veps)$.

\nc



 \subsection{Numerical illustration of results}
 \label{sec:MainNumerical}
We illustrate the recovery of global lower bounds (Theorem~\ref{thm:GlobalBounds1}) on the example of a unit $d$-sphere. Since the unit $d$-sphere has sectional curvature $1$, we expect to recover a global lower bound of $1$ for Ollivier's Ricci curvature in a random geometric graph, in the large-sample limit. To test this numerically, we sample $n$ points uniformly at random from the unit $d$-sphere, centered at the origin. Sampling is performed via sphere picking (also known as Muller-Marsaglia algorithm~\citep{muller}): We sample $d$ independent random variables from the standard normal distribution $z=(z_1, \dots, z_d)$. The point $\big(\sum_{i} z_i \big)^{-1/2} z$ lies on the unit $d$-sphere. We repeat this procedure $n$ times to generate a sample of size $n$. Figure~\ref{fig:sphere} shows the curvature distribution of the resulting random geometric graphs with different hyperparameters. We see that the Ricci curvatures of almost all edges concentrate around 1, which aligns well with our theoretical results\footnote{We suspect that the small number of outliers are due to numerical inaccuracies, specifically, (1) the sample distribution is not perfectly uniform and (2) some sample points do not lie exactly on the sphere.}.

In our numerical experiments, the 1-Wasserstein distance is computed via the Hungarian algorithm, which has a cubic complexity. Hence, for large sample sizes it is expensive to compute Ollivier's curvature on each edge in the RGG. However, as a byproduct of our global lower curvature bounds, we develop upper and lower bounds on the 1-Wasserstein distance, which do not require optimizing transport maps, but can be computed from combinatorial arguments. Thus, in applications in which we mainly rely on global lower curvature bounds (see examples in the next section), our approach nevertheless allows for an efficient characterization of the manifolds's geometry.

\begin{figure}[ht]
    \centering
\begin{subfigure}[t]{\linewidth}
\includegraphics[width=0.49\textwidth]{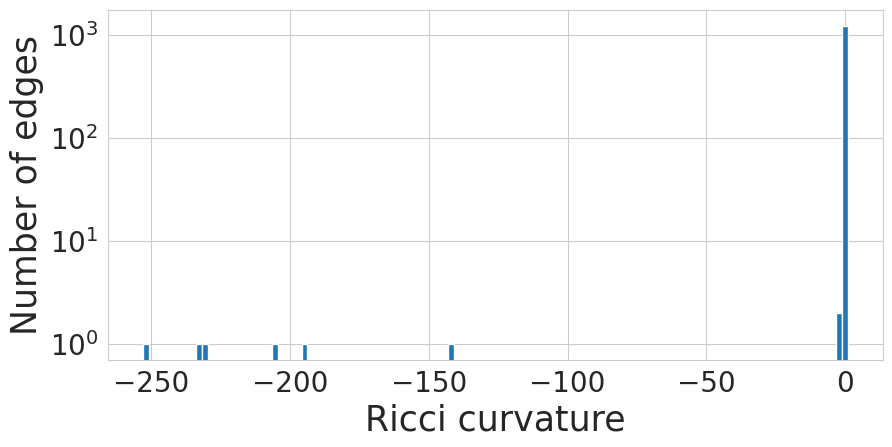}
\hfill
\includegraphics[width=0.49\textwidth]{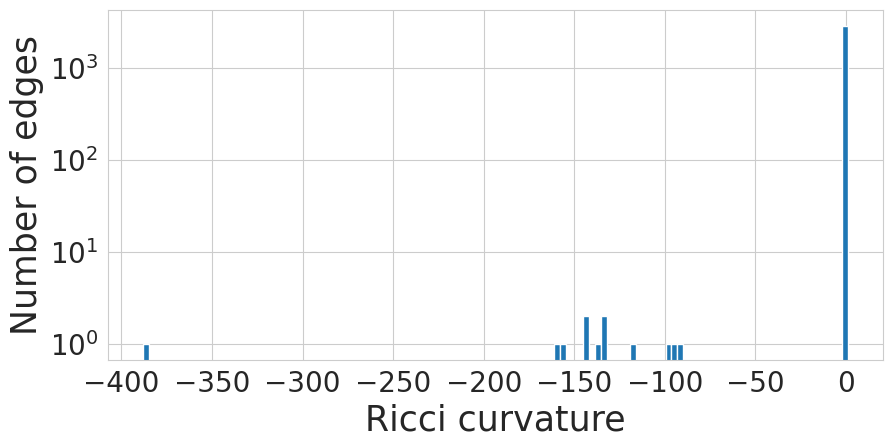}
\subcaption{Sample size $n=500$ (left) and $n=750$ (right) with parameters $\veps=0.2,\delta_0=0.01, \delta_1=0.02$.}
\end{subfigure}
  \\
  \begin{subfigure}[t]{\linewidth}
\includegraphics[width=0.49\textwidth]{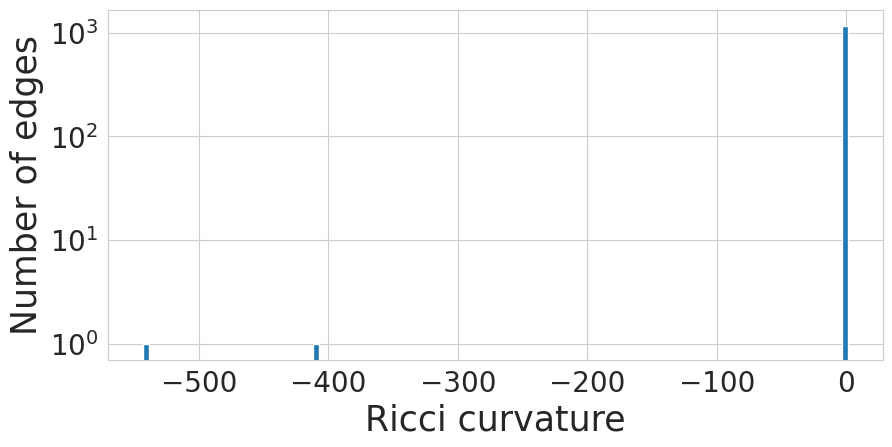}
\hfill
\includegraphics[width=0.49\textwidth]{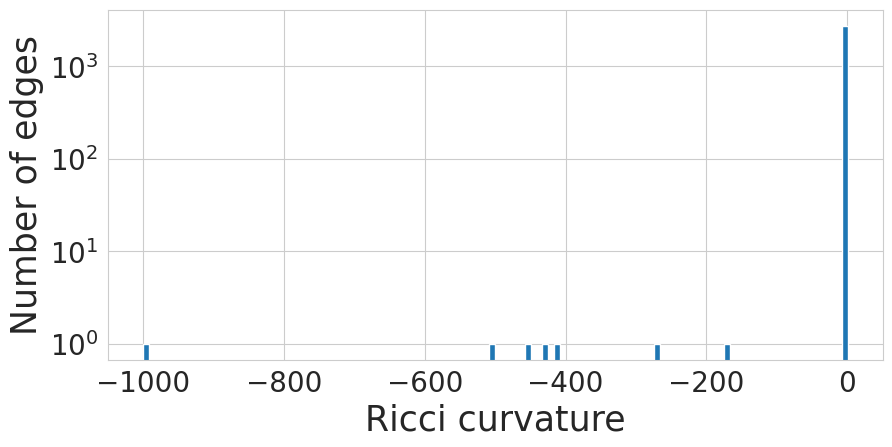}
\subcaption{Sample size $n=1000$ (left) and $n=1500$ (right) with parameters $\veps=0.1,\delta_0=0.005, \delta_1=0.01$}
\end{subfigure}
\caption{Distribution of curvature values for a random geometric graph $G_\veps$ constructed from 
$n$ data points sampled from the 2-sphere $\mathbb{S}^2 \subseteq \R^3$. The vertical axis is plotted in log-scale.}
  \label{fig:sphere}
\end{figure}
 \color{black}

 \section{Related Work}
 \label{sec:Literature}
 Throughout this work, we consider Ricci curvature in the sense of Ollivier~\citep{ollivier2009ricci,Ol}. Our analysis utilizes several theoretical results that date back to Ollivier's work~\citep{ollivier2009ricci}. 
 As mentioned above, the pointwise convergence of Ollivier's Ricci curvature on a RGGs toward a manifold's Ricci curvature has been previously studied in~\citep{van2021ollivier}, which gave \textit{asymptotic} guarantees assuming access to geodesic distances. Another result where access to geodesic distances was not assumed was also stated in that paper, but such result only holds in very special cases.
 
 Other popular discrete Ricci curvatures include notions by Forman~\citep{forman_bochners_2003} and Maas and Erbar~\citep{erbar_ricci_2012}, as well as a notion by Lin-Yau~\citep{lin-yau}, which is closely related to Ollivier's Ricci curvature. The relation between Forman's Ricci curvature and that of an underlying manifold has recently been explored in~\citep{meyer_approximate_2022}. Notably, Maas and Erbar's Ricci curvature allows for a log-Sobolev inequality (via a discrete Bakry-Emery theorem) and the inference of curvature lower bounds~\citep{fathi_entropic_2016}, although not in a tractable way that would allow one to infer its consistency in the RGG setting. Discrete Ricci curvatures have been related to a range of classical graph characteristics \citep{jost2014ollivier,weber2017characterizing,weber2018coarse} and have found applications in network analysis and machine learning~\citep{Farooq2017NetworkCA,ni2019community,tian2023mixed,fesser2023augmentations,sandhu2016ricci,weber2016forman,Ye2020Curvature}. Beyond \citep{van2021ollivier}, to the best of our knowledge there is no other work rigorously connecting the discrete Ricci curvature of a point cloud and the Ricci curvature of a manifold.
 
 Continuum limits of different geometric characteristics defined on data clouds have been explored in the literature. For example, the analysis of graph Laplacians and their convergence toward Laplace-Beltrami operators on manifolds has received a lot of attention in the last decades, e.g., in \citep{hein_graphs_2005,SINGER2006128} where pointwise consistency results are presented, and in \citep{burago2015graph,calder_improved_2020,CalderNGTLewicka,DUNSON2021282,Trillos2019_Error,WormelReich}, where spectral consistency is discussed. 

 Other works explore the discrete-to-continuum convergence of general data-driven distances, e.g., see \citep{howard2001geodesics, DiazetAl,DavisSethuraman, Hwang2016_Shortest,Bungert2022, little2022balancing} and some of the references therein. The papers \citep{DiazetAl,DavisSethuraman,Bungert2022}, for example, discuss convergence of distances defined on random geometric graphs (RGG), either in the i.i.d. setting or for Poisson point processes. The results from \citep{DavisSethuraman} are asymptotic, while the ones in \citep{DiazetAl,Bungert2022} provide high probability convergence rates in terms of an RGG's connectivity parameter. The results in \citep{Bungert2022}, for example, discuss the convergence of the ratio between certain expectations of distances at different scales. When combined with concentration inequalities, this allows the authors to prove rates of convergence, in sparse settings, for a semisupervised learning procedure known as Lipschitz learning; see \citep{Roith2022,Bungert2022}. Finally, \citep{HJB_Graphs} presents a graph-PDE approach to approximate geodesic distances by analyzing variants of the Eikonal equation on a graph.  Many of the approaches discussed in the aforementioned papers could potentially be used to define estimators $\hat{d}_g$ for $d_\M$. 

The approach for estimating $d_\M$ from data outlined in section \ref{sec:hatd_g} relies on the approximation of the second fundamental form from data. Some papers that explore the estimation of the second fundamental form are \citep{kim_curvature-aware_2013,aamari_nonasymptotic_2019,cao2021efficient} and are briefly discussed in Appendix \ref{sec:EstimatesSecondFundForm}.


\nc

 
  \section{Nonasymptotic Guarantees on Curvature Consistency}
\label{sec:NonAsymptoticGuarantees}
 
 In this section we present proofs for our main results. After some preliminary discussion, we first show the consistency of global curvature bounds, followed by a proof of pointwise, local consistency. A summary of the notation used throughout this section can be found in Table~\ref{tab:notation}, for the convenience of the reader.

\begin{table}[ht]
\centering
\begin{tabular}{ll}
      \toprule
      $B(x,\veps)$ & Euclidean ball \\
      $B_\M(x,\veps)$ & Ollivier ball, continuous setting \\
      $B_G(x,\veps)$ & Ollivier ball, discrete setting \\
      $d_\Mc(x,y)$ & geodesic distance on $\M$ \\
      $\hat{d}_g(x,y)$ & data-driven local approximation of $d_\M$ \\
       $\tilde d_{G,\M}(x,y)$ & pre-distance on $\X$, access to distances on $\M$ (Eq.~\ref{eq:DistancefromGeodesic}) \\
      $\tilde d_{G,\Xc}(x,y)$ & pre-distance on $\X$, data-driven (Eq.~\ref{eq:DistancefromDataDriven}) \\
      $d_{G,\M}(x,y)$ & distance on $\X$, access to distances on $\M$ (Eq.~\ref{eq:DistancefromGeodesic}) \\
      $d_{G,\Xc}(x,y)$ & distance on $\X$, data-driven (Eq.~\ref{eq:DistancefromDataDriven}) \\
      $K(u,v)$ & sectional curvature (Eq.~\ref{eq:sec-curv}) \\
      $\Ric_x(v)$ & Ricci curvature (Eq.~\ref{eq:ric-curv}) \\
      $W_1(\mu,\nu)$ & 1-Wasserstein distance w.r.t. $d_\M$ \\& or for a generic metric $d$ (clear from context) (Eq.~\ref{eq:W1-dist}) \\
      $W_{1,G}(\mu,\nu)$ & 1-Wasserstein distance w.r.t. $d_G$ (Eq.~\ref{eq:W1G-dist}) \\
      $W_\infty(\mu,\nu)$ & $\infty$-OT distance (Eq.~\ref{eq:W-inf}) \\
       \bottomrule
    \end{tabular}
    \caption{Summary of notation.}
    \label{tab:notation}
\end{table}

 \subsection{Preliminaries}
 
 

In this subsection we collect a series of preliminary results and estimates that we use in the proofs of our main results. 

\subsubsection{Some lemmas from optimal transport theory}

We recall the Kantorovich-Rubinstein duality theorem for the 1-Wasserstein metric between two probability measures over the same Polish metric space.

\begin{theorem}[Kantorovich-Rubinstein, cf. Thm~1.14,~\citep{VillaniBook}]
\label{thm:KantorovichRubinstein}
Let $\mu_1, \mu_2$ be two (Borel) probability measures over a Polish metric space $(\mathcal{U},d)$. Then
\[ W_1(\mu_1, \mu_2) := \sup_{ f \text{ s.t. } \mathrm{Lip}(f) \leq 1 } \int f(\tilde x ) d \mu_1(\tilde x) - \int f(\tilde y) d\mu_2(\tilde y) . \]
In the above, $\mathrm{Lip}(f)$ stands for the Lipschitz constant (relative to the metric $d$) of the function $f$.
\end{theorem}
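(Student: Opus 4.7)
The plan is to prove the two inequalities separately, with the bulk of the work in the non-trivial direction.

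For the easy inequality $\sup_f \bigl[\int f\,d\mu_1 - \int f\,d\mu_2\bigr] \leq W_1(\mu_1,\mu_2)$: given any $1$-Lipschitz $f$ and any coupling $\pi \in \Gamma(\mu_1,\mu_2)$, the marginal conditions and linearity of the integral give
\[ \int f\,d\mu_1 - \int f\,d\mu_2 \;=\; \int_{\mathcal{U}\times \mathcal{U}} \bigl(f(\tilde x) - f(\tilde y)\bigr)\,d\pi(\tilde x,\tilde y) \;\leq\; \int d(\tilde x,\tilde y)\,d\pi(\tilde x,\tilde y), \]
and taking $\inf_\pi$ then $\sup_f$ yields this direction.

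For the reverse inequality, the plan is to invoke the general Kantorovich duality for lower semicontinuous cost on Polish spaces, which states
\[ W_1(\mu_1,\mu_2) \;=\; \sup\Bigl\{ \int \phi\,d\mu_1 + \int \psi\,d\mu_2 \;:\; \phi(x)+\psi(y)\leq d(x,y)\Bigr\}, \]
the supremum being over integrable admissible pairs. Then I would exploit the fact that the cost is a metric via a $c$-transform: for any admissible $(\phi,\psi)$, define $\tilde\phi(x):=\inf_{y}\bigl[d(x,y)-\psi(y)\bigr]$. As an infimum of $1$-Lipschitz functions of $x$ (for fixed $y$), $\tilde\phi$ is $1$-Lipschitz; admissibility gives $\phi \leq \tilde\phi$ pointwise; and taking $y=x$ in the defining infimum gives $\tilde\phi(x) \leq -\psi(x)$. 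Therefore
\[ \int \phi\,d\mu_1 + \int \psi\,d\mu_2 \;\leq\; \int \tilde\phi\,d\mu_1 - \int \tilde\phi\,d\mu_2 \;\leq\; \sup_{\mathrm{Lip}(f)\leq 1}\!\left[\int f\,d\mu_1 - \int f\,d\mu_2 \right]. \]
Taking the supremum over admissible pairs and using Kantorovich duality closes the argument.

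The main obstacle is the general Kantorovich duality theorem itself, which requires some convex-analytic input (Hahn-Banach or a minimax argument, plus care to extend from compact to Polish spaces); I would invoke it as a black box, citing Theorem~1.3 in Villani's book for instance. The $c$-transform manipulation is elementary once the metric structure of the cost is exploited. A secondary technical point is verifying integrability of the $c$-transform $\tilde\phi$ under $\mu_1$ and $\mu_2$; this can be dealt with by first restricting attention to bounded admissible pairs (via truncation) and then passing to limits, a standard move that does not alter the argument above.
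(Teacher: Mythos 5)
The paper does not prove this statement; it is quoted verbatim as a known result with a citation to Theorem~1.14 of Villani's book, so there is no in-paper argument to compare against. That said, your sketch is correct and is essentially the standard derivation (the one in Villani's own exposition): the easy inequality by testing a $1$-Lipschitz $f$ against an arbitrary coupling, and the hard inequality by invoking general Kantorovich duality and then tightening an admissible pair $(\phi,\psi)$ to its $c$-transform $\tilde\phi(x)=\inf_y[d(x,y)-\psi(y)]$, which is automatically $1$-Lipschitz because the cost is a metric. One small point worth making explicit if you wanted to write this out fully: to know $\tilde\phi$ is finite (not $\equiv -\infty$), note that admissibility and $\phi(x_0)>-\infty$ at a single $x_0$ force $\psi(y)\leq d(x_0,y)-\phi(x_0)$, whence $\tilde\phi(x)\geq \phi(x_0)-d(x,x_0)$; this also supplies the integrability of $\tilde\phi$ once $\mu_1,\mu_2$ have finite first moments, which is the regime in which $W_1$ is finite. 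Your remark about truncating to bounded admissible pairs is the usual alternative route to the same end.
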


Next, we recall the notion of \textit{glueing} of couplings. Given finite positive measures $\mu_1,\dots, \mu_k$ over a Polish space $(\mathcal{U},d)$, all of which have the same total mass, and given couplings $\pi_{12}, \pi_{23}, \dots, \pi_{k-1, k}$ with $\pi_{l, l+1} \in \Gamma(\mu_l, \mu_{l+1})$, we define $\Pi$, the glueing of the couplings $\pi_{l, l+1}$, as the measure over $\mathcal{U}^k$ satisfying
\begin{align}
\begin{split}
&\int \varphi(x_1, \dots, x_k) d\Pi(x_1, \dots, x_k) = 
\\ & \quad \int\int \dots \int \varphi(x_1, \dots, x_k) d\pi_{k-1, k}( x_k | x_{k-1} ) \dots d\pi_{1,2}(x_2| x_1) d\mu_1(x_1)  
\end{split}
\label{eq:Glueing}
\end{align}
for all regular enough test functions $\varphi$; in the above, $\pi_{l, l+1 }(\cdot| x_{l} )$ must be interpreted as the conditional distribution of $x_{l+1}$ given $x_l$ when $(x_{l},x_{l+1})$ are jointly distributed according to $\pi_{l, l+1}$. For given $l,s \in \{ 1, \dots, k \}$ consider the map $T_{l,s}: (x_1, \dots, x_k) \mapsto (x_s, x_l)$. It is straightforward to see that $T_{l,s\sharp } \Pi \in \Gamma(\mu_s, \mu_l)$.

Next, we present the following lemma.

\begin{lemma}
\label{lem:SplittingMass}
 Let $\mu_1, \mu_2, \tilde \mu_1,\tilde \mu _2 $ be finite positive measures defined over the same Polish space $(\mathcal{U},d)$, satisfying $\mu_1(\mathcal{U})= \tilde \mu_{1}(\mathcal{U})$ and $\mu_2(\mathcal{U})= \tilde \mu_{2}(\mathcal{U})$. Then
 \[  W_1( \mu_1 +\mu_2 , \tilde \mu_1 + \tilde \mu_2  ) \leq  W_1( \mu_1,  \tilde \mu_1  )  +  W_1( \mu_2 , \tilde \mu_2  ).  \]
\end{lemma}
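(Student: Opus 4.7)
The plan is to build an explicit coupling of $\mu_1+\mu_2$ and $\tilde\mu_1+\tilde\mu_2$ by superposing optimal couplings for each of the two pairs $(\mu_1,\tilde\mu_1)$ and $(\mu_2,\tilde\mu_2)$, and then simply plug this coupling into the definition of $W_1$. The key point that makes everything work is that the $W_1$ functional (extended to finite positive measures with matched total mass in the obvious way, via admissible couplings with prescribed marginals) is linear in the measure on $\mathcal{U}\times\mathcal{U}$, so superposition of couplings superposes costs.

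More concretely, first I would invoke existence of optimal couplings on Polish spaces (standard tightness plus lower semicontinuity of the cost, as in Villani) to pick $\pi_1\in\Gamma(\mu_1,\tilde\mu_1)$ and $\pi_2\in\Gamma(\mu_2,\tilde\mu_2)$ achieving $W_1(\mu_1,\tilde\mu_1)$ and $W_1(\mu_2,\tilde\mu_2)$ respectively; if for some reason attainment is an issue (e.g.\ because the measures are not probability measures but merely finite positive with matched mass), I would instead take $\varepsilon$-optimal couplings and let $\varepsilon\downarrow 0$ at the end. Next, I would define $\Pi:=\pi_1+\pi_2$ as a finite positive measure on $\mathcal{U}\times\mathcal{U}$ and verify directly from the definition of marginals that its first marginal is $\mu_1+\mu_2$ and its second marginal is $\tilde\mu_1+\tilde\mu_2$; the mass-matching hypotheses $\mu_i(\mathcal{U})=\tilde\mu_i(\mathcal{U})$ for $i=1,2$ ensure that $\Pi$ is an admissible coupling (its total mass is the common mass of $\mu_1+\mu_2$ and $\tilde\mu_1+\tilde\mu_2$).

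Then the upper bound is immediate: by definition of $W_1$ over $\Gamma(\mu_1+\mu_2,\tilde\mu_1+\tilde\mu_2)$,
\[
W_1(\mu_1+\mu_2,\tilde\mu_1+\tilde\mu_2)\;\le\;\int_{\mathcal{U}\times\mathcal{U}} d(x,y)\,d\Pi(x,y)
\;=\;\int d(x,y)\,d\pi_1+\int d(x,y)\,d\pi_2,
\]
and the right-hand side equals $W_1(\mu_1,\tilde\mu_1)+W_1(\mu_2,\tilde\mu_2)$ by optimality of $\pi_1,\pi_2$ (or is bounded by it up to $2\varepsilon$ in the $\varepsilon$-optimal version, which disappears in the limit).

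I do not anticipate a genuine obstacle here; the only bookkeeping issue is justifying that one is entitled to work with $W_1$ on non-normalized positive measures of equal mass. This can be handled either by renormalizing (divide all four measures by the appropriate totals and use homogeneity) or, more cleanly, by simply taking the definition of $W_1$ via couplings with the prescribed marginals as the working definition in this setting, which is consistent with \eqref{eq:W1-dist} whenever the marginals are probability measures. Once this is fixed, the argument reduces to the superposition identity above and nothing more.
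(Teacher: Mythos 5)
Your argument is correct and is essentially the same as the paper's: both rest on the observation that for $\pi_1\in\Gamma(\mu_1,\tilde\mu_1)$ and $\pi_2\in\Gamma(\mu_2,\tilde\mu_2)$ one has $\pi_1+\pi_2\in\Gamma(\mu_1+\mu_2,\tilde\mu_1+\tilde\mu_2)$, after which the cost bound is immediate. Your remarks on existence of optimal couplings and on extending $W_1$ to matched-mass positive measures are harmless bookkeeping that the paper leaves implicit.
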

\begin{proof}
The desired inequality follows from the observation that for any two couplings $\pi_1 \in \Gamma(\mu_1, \tilde \mu_1)$ and $\pi_2 \in \Gamma(\mu_2, \tilde \mu_2)$ we have $\pi_1 + \pi_2 \in \Gamma(\mu_1+ \mu_2 , \tilde \mu_1 + \tilde \mu_2)$.
\end{proof}

\subsubsection{Some estimates for the $\infty$-OT distance between measures}

In the proofs of our main results we will make use of the $\infty$-OT distance $W_\infty(\cdot,\cdot)$ between probability measures defined over the same metric space. Precisely, let $\mu_1, \mu_2$ be two (Borel) probability measures over a Polish metric space $(\mathcal{U},d)$. We define $W_\infty(\mu_1, \mu_2)$ as
\begin{equation}\label{eq:W-inf}
     W_\infty(\mu_1, \mu_2) := \inf_{\pi \in \Gamma(\mu_1, \mu_2)} \sup_{(\tilde x , \tilde y) \in \supp(\pi)  }  d(\tilde x , \tilde y),
\end{equation}
where $\supp(\pi)$ stands for the support of the measure $\pi$.

The following results relate, on the one hand, the $\infty$-OT distance between two measures with densities with respect to the uniform measure over a Euclidean (or geodesic) ball, and on the other hand the $L^\infty$ distance between the densities.

\begin{proposition}[cf. Thm.~1.2,~\citep{InftyOT}]
Let $\mu_1, \mu_2$ be two probability measures over $B(0,\veps) \subseteq \R^m$ with densities $\rho_1, \rho_2$ with respect to the uniform probability measure over $B(0,\veps)$ satisfying:
\[ \frac{1}{\alpha} \leq \rho_1(x), \rho_2(x) \leq \alpha,  \]
for some $\alpha>1$. Then 
\[ W_\infty(\mu_1, \mu_2) \leq \alpha C_m \veps  \lVert  \rho_1 - \rho_2  \rVert_{L^\infty(B(0,\veps))}, \]
where $C_m$ only depends on dimension $m$ and not on $\alpha$, $\veps$, or $\rho_1, \rho_2$.
\label{prop:InftyOT}
\end{proposition}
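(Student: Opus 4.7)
The plan is to construct an explicit transport map $T \colon B(0,\veps) \to B(0,\veps)$ with $T_\sharp \mu_1 = \mu_2$ that moves every point by at most $\alpha C_m \veps \lVert \rho_1 - \rho_2 \rVert_{L^\infty}$, and then conclude by noting that $(\mathrm{Id},T)_\sharp \mu_1 \in \Gamma(\mu_1,\mu_2)$ realizes this quantity as an upper bound for $W_\infty(\mu_1,\mu_2)$. I would build $T$ via a Dacorogna--Moser flow argument, where the factor $\veps$ arises naturally from the scaling of an elliptic regularity estimate on a ball of radius $\veps$.

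First I would pass to Lebesgue densities, setting $\tilde \rho_i := \rho_i/|B(0,\veps)|$, and linearly interpolate $\tilde \rho_t := (1-t)\tilde\rho_1 + t\tilde\rho_2$ for $t\in[0,1]$. Since $\rho_1,\rho_2 \geq 1/\alpha$, the interpolant satisfies $\tilde \rho_t \geq (\alpha |B(0,\veps)|)^{-1}$ pointwise. To realize the transport $\mu_1 \mapsto \mu_2$ as the time-$1$ flow of a velocity field $V_t$ (with no-flux condition $V_t \cdot \nu = 0$ on $\partial B(0,\veps)$), I would seek $V_t$ satisfying the continuity equation $\partial_t \tilde \rho_t + \nabla \cdot(\tilde\rho_t V_t) = 0$. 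The standard choice is $V_t = -\nabla u / \tilde \rho_t$, where $u$ solves the Neumann--Poisson problem $-\Delta u = \tilde\rho_2 - \tilde\rho_1$ on $B(0,\veps)$ with $\partial_\nu u = 0$; the compatibility condition $\int(\tilde\rho_2 - \tilde\rho_1)\,dx = 0$ makes this solvable.

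The factor $\veps$ then comes out of the elliptic estimate. By scaling the unit-ball estimate (Neumann Poisson with bounded right-hand side yields Lipschitz solution), $\lVert \nabla u\rVert_{L^\infty} \leq C_m \veps \lVert \tilde \rho_2 - \tilde\rho_1\rVert_{L^\infty} = C_m \veps \lVert \rho_1-\rho_2\rVert_{L^\infty}/|B(0,\veps)|$. Dividing by the lower bound on $\tilde \rho_t$ gives $\lVert V_t\rVert_{L^\infty} \leq \alpha C_m \veps \lVert \rho_1-\rho_2\rVert_{L^\infty}$ uniformly in $t\in[0,1]$. Integrating the flow of $V_t$ produces a map $T = \Phi_1$ with $T_\sharp \mu_1 = \mu_2$ and $|T(x)-x| \leq \int_0^1 \lVert V_t\rVert_{L^\infty}\,dt \leq \alpha C_m \veps \lVert \rho_1-\rho_2\rVert_{L^\infty}$ for $\mu_1$-a.e.~$x$, which yields the proposition.

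The main technical obstacle will be the low regularity of $V_t$: since the densities are merely bounded, $V_t$ is only in $L^\infty$ and classical Cauchy--Lipschitz theory does not directly furnish a flow. I would sidestep this either by first mollifying $\rho_1,\rho_2$ to strictly positive smooth densities, running the argument to obtain classical flows with constants independent of the regularization parameter, and then passing to the limit using lower semicontinuity of $W_\infty$ under weak convergence; or by invoking the Ambrosio--DiPerna--Lions theory of regular Lagrangian flows, which applies here because $V_t \in L^\infty$ and $\nabla \cdot V_t$ is bounded. Either route preserves the quantitative bound above.
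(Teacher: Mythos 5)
Your argument is correct in its core idea, but it takes a genuinely different route from the paper. The paper disposes of this proposition in two sentences: it cites Theorem~1.2 of the reference \citep{InftyOT} for the case $\veps=1$ and then notes that the general $\veps$ follows immediately by rescaling the domain. You, by contrast, have reconstructed a self-contained proof via a Dacorogna--Moser flow, with the factor $\veps$ emerging from the scaling of the Neumann--Poisson elliptic estimate rather than from an explicit domain rescaling, and with the factor $\alpha$ arising transparently from the lower bound $\tilde\rho_t \geq (\alpha\,|B(0,\veps)|)^{-1}$ on the linearly interpolated density. Both are valid: the paper's version is shorter and delegates the analytic content to the cited reference, while yours is self-contained and makes the origin of both constants visible.

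One correction to the technical discussion at the end. The regular-Lagrangian-flow route via Ambrosio/DiPerna--Lions does not apply as stated, because your velocity field $V_t = -\nabla u/\tilde\rho_t$ does not have bounded divergence when $\tilde\rho_1, \tilde\rho_2$ are merely $L^\infty$: formally $\nabla\cdot V_t = -\Delta u/\tilde\rho_t + \nabla u \cdot \nabla(\tilde\rho_t^{-1})$, and the second term involves $\nabla\tilde\rho_t$, which you do not control (indeed the product $\nabla\cdot(\tilde\rho_t V_t) = -\Delta u$ is bounded, but that controls the flux, not $\nabla\cdot V_t$ itself). The mollification route, however, does close the argument cleanly: after smoothing $\rho_1, \rho_2$ (which preserves a lower bound, say $1/(2\alpha)$, for small enough mollification parameter, with appropriate care at the boundary, e.g.\ by reflection), the velocity field is smooth, the classical flow exists, the quantitative bound $|T(x)-x| \leq \alpha C_m \veps \lVert\rho_1-\rho_2\rVert_{L^\infty}$ holds uniformly, and one concludes by the lower semicontinuity of $W_\infty$ under narrow convergence (extract a weak limit of the couplings, whose supports all lie in the closed set $\{(x,y):|x-y|\leq c\}$). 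With the DiPerna--Lions alternative removed, the proposal is sound.
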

\begin{proof}
Theorem 1.2. in \citep{InftyOT} gives the result for $\veps=1$. The general case follows immediately from a rescaling argument.  
\end{proof}

\begin{corollary}
Let $\M$ be a smooth, compact Riemannian manifold without boundary and let $x \in \M$. Let $\veps < \iota_\M/2$. Let $\mu_1, \mu_2$ be two probability measures over $B_\M(x,\veps)$ with densities $\rho_1, \rho_2$, with respect to the uniform probability measure over $B_\M(x,\veps)$, that satisfy:
\[ \frac{1}{\alpha} \leq \rho_1(x), \rho_2(x) \leq \alpha,  \]
for some $\alpha>1$. Then 
\[ W_\infty(\mu_1, \mu_2) \leq \alpha C_\M \veps  \lVert  \rho_1 - \rho_2  \rVert_{L^\infty(B_\M(x,\veps))}, \]
where $C_\M$ is a manifold dependent constant that does not depend on $x$, $\alpha$, $\veps$, or $\rho_1, \rho_2$.
\label{cor:InftyOT}
\end{corollary}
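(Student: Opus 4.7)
The strategy is to transport the problem from the geodesic ball $B_\M(x,\veps)$ to the Euclidean ball $B(0,\veps)\subseteq T_x\M \cong \R^m$ via the logarithm map and then appeal to Proposition~\ref{prop:InftyOT}. Concretely, I would set $\tilde\mu_i := (\log_x)_\sharp \mu_i$ for $i=1,2$. Because $\veps < \iota_\M/2$, the map $\exp_x : B(0,\veps) \to B_\M(x,\veps)$ is a diffeomorphism, so $\tilde\mu_i$ is a well-defined probability measure on $B(0,\veps)$ and, denoting by $J_x(v)$ the Jacobian of $\exp_x$ at $v$, its density $\tilde\rho_i$ with respect to the uniform probability measure on $B(0,\veps)$ is
\[
\tilde\rho_i(v) \;=\; \rho_i(\exp_x(v)) \cdot J_x(v) \cdot \frac{\vol(B_\M(x,\veps))}{\vol(B(0,\veps))}.
\]

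The first step is to verify that the hypotheses of Proposition~\ref{prop:InftyOT} are satisfied for $\tilde\mu_1,\tilde\mu_2$ with a distortion parameter $\tilde\alpha$ that is a manifold-dependent multiple of $\alpha$. This rests on uniform two-sided bounds $c_\M \leq J_x(v) \leq C_\M$ for $v \in B(0,\veps)$ and $x \in \M$, and analogous bounds on the ratio $\vol(B_\M(x,\veps))/\vol(B(0,\veps))$, both of which follow from smoothness and compactness of $\M$ together with $\veps < \iota_\M/2$. These same Jacobian bounds give
\[
\lVert \tilde\rho_1 - \tilde\rho_2 \rVert_{L^\infty(B(0,\veps))} \;\leq\; C_\M \lVert \rho_1 - \rho_2 \rVert_{L^\infty(B_\M(x,\veps))},
\]
so Proposition~\ref{prop:InftyOT} yields $W_\infty(\tilde\mu_1,\tilde\mu_2) \leq \tilde\alpha C_m \veps \, C_\M \lVert \rho_1 - \rho_2 \rVert_{L^\infty(B_\M(x,\veps))}$, where $W_\infty$ on $\R^m$ is taken with respect to the Euclidean metric.

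The second step is to transfer this estimate to $W_\infty$ on $(\M,d_\M)$. Given any coupling $\tilde\pi \in \Gamma(\tilde\mu_1,\tilde\mu_2)$, the pushforward $(\exp_x\otimes\exp_x)_\sharp \tilde\pi$ lies in $\Gamma(\mu_1,\mu_2)$, and for $(v,w) \in \supp(\tilde\pi) \subseteq B(0,\veps)\times B(0,\veps)$ a standard Jacobi-field/Rauch comparison estimate (available uniformly on the compact manifold $\M$ for $\veps<\iota_\M/2$) provides
\[
d_\M(\exp_x(v),\exp_x(w)) \;\leq\; C_\M |v-w|.
\]
Taking the infimum over $\tilde\pi$ gives $W_\infty(\mu_1,\mu_2) \leq C_\M W_\infty(\tilde\mu_1,\tilde\mu_2)$. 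Combining with the previous estimate and absorbing all manifold-dependent constants (including those coming from $\tilde\alpha = O(\alpha)$ and from the $\log_x$-Jacobian) into a single constant $C_\M$ yields the claim. Uniformity of the constant in $x$ is automatic from the compactness of $\M$ and the smoothness of $(x,v)\mapsto \exp_x(v)$ on the compact set $\{(x,v): x\in\M,\, v\in T_x\M,\, |v|\leq \veps\}$.

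The only mildly technical point is verifying the Lipschitz bound for $\exp_x$ with a constant that is uniform in $x$; all other steps are routine change-of-variables computations. Since the corollary only asks for a manifold-dependent constant (not one close to $1$), one can use crude Jacobi-field bounds rather than sharp comparison theorems, which simplifies the argument considerably.
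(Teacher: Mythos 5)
Your argument is correct and is essentially the same as the paper's: the paper's one-line proof invokes the uniform bi-Lipschitz property of $\exp_x$ on $B(0,\veps)$ to transfer Proposition~\ref{prop:InftyOT} to the geodesic ball, and your proposal simply spells out the change-of-variables computation and the coupling pushforward that this transfer entails. One harmless slip: the volume ratio in your density formula should be $\vol(B(0,\veps))/\vol(B_\M(x,\veps))$ rather than its reciprocal, but since that ratio is uniformly bounded above and below this does not affect the argument.
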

\begin{proof}
   Since for every $x\in \M$ the map $\exp_x : B(0,\veps) \rightarrow B_\M(x,\veps)$ is bi-Lipschitz, with bi-Lipschitz constants uniformly bounded over all $0 < \veps < \iota_\M/2 $ and all $x \in \M$, the desired inequality follows directly from Proposition \ref{prop:InftyOT}.   
\end{proof}

We also discuss probabilistic bounds for the $\infty$-OT distance between $\mu$ and the empirical measure $\mu_n$. Specifically, we will use the following result that can be found in \citep{Trillos2019_Error} (see also references therein).

\begin{theorem}[Theorem 2 in \citep{Trillos2019_Error} for $m \geq 2$ and Theorem 1 in \citep{Liu2019} for $m=1$]  
\label{thm:inftyOTEmpiricalBound}
Let $\mu_n$ be the empirical measure of $n$ i.i.d. samples from $\mu$. Then in case $m\geq 2$ (recall $m$ is $\M$'s dimension), for any $s>1$ and $n \in \N$, we have
\begin{equation}
  W_\infty(\mu, \mu_n)  = \min_{T \: : \: T_{\sharp} \mu =\mu_n} \sup_{x \in \M} d_\M( T(x), x) \leq A_{\M, s} \frac{(\log(n))^{p_m}}{n^{1/m}} , 
   \label{eq:InftyOTMap}
\end{equation}
with probability at least $1- C_{\M, s} n^{-s}$, where $p_m$ is a dimension dependent power: $p_m= 3/4$ when $m=2$, and $p_m= 1/m$ when $m\geq 3$. The constants $C_{\M, s}$ and $A_{\M, s}$ only depend on $s $ and on $\M$. In the sequel, we use $T_n$ to denote a minimizer in the above formula. In case $m=1$, the above statement continues to hold after substituting $\frac{(\log(n))^{p_m}}{n^{1/m}}$ with $\frac{(\log(n))^{1/2}}{n^{1/2}}$.




\end{theorem}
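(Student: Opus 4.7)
The plan is to exhibit, with probability at least $1 - C n^{-s}$, a transport map $T_n : \M \to \X$ with $T_{n\sharp}\mu = \mu_n$ whose maximum displacement matches the claimed rate. Equivalently, one must produce a measurable partition $\{A_i\}_{i=1}^n$ of $\M$ with $\mu(A_i) = 1/n$ and $x_i \in A_i$, such that $\sup_{y \in A_i} d_\M(x_i, y) \leq C (\log n)^{p_m}/n^{1/m}$ uniformly in $i$. The overall strategy is the multi-scale matching technique of Ajtai--Koml\'os--Tusn\'ady and Leighton--Shor, adapted to the manifold setting.

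First I would fix the target scale $r_n$, namely $r_n \asymp (\log n / n)^{1/m}$ for $m \geq 3$, $r_n \asymp (\log n)^{3/4} n^{-1/2}$ for $m=2$, and $r_n \asymp (\log n / n)^{1/2}$ for $m=1$. I would then build a hierarchical dyadic partition of $\M$ using Christ-type cubes adapted to the volume measure: at the finest scale the cells have diameter $\asymp r_n$ and $\mu$-measure $\asymp r_n^m$, and at each coarser scale the diameters double. The total number of cells across all scales is $O(n)$. Applying Bernstein's inequality to each cell $C$, the count $N(C) := n\mu_n(C)$, which is $\mathrm{Binomial}(n,\mu(C))$, concentrates as $|N(C) - n\mu(C)| \leq C_s(\sqrt{n\mu(C)\log n} + \log n)$ outside an event of probability $n^{-s-1}$. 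A union bound over all cells yields the desired concentration on a single event of probability at least $1 - C_s n^{-s}$.

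The third step is to upgrade these cell-count bounds into the existence of a transport map with displacement $\lesssim r_n$. I would apply Hall's marriage theorem to the bipartite graph whose left vertices are the finest cells, whose right vertices are the sample points, and with an edge whenever $d_\M(x_i, C) \leq r_n$. The Hall condition requires that for every union $A$ of finest cells, $\#\{i : d_\M(x_i, A) \leq r_n\} \geq n \mu(A)$. To verify this, I would represent $A$ as a disjoint union of cells from various scales of the hierarchy and absorb the local deficits of size $\sqrt{n\mu(C)\log n}$ by borrowing sample points from neighboring cells one scale coarser, iterating the bound up the tree.

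The main obstacle is the $m=2$ case: the dyadic cascade is only conditionally summable, and the sum of $\sqrt{\log(1/r_k^m)}$ over scales $r_k \in [r_n, 1]$ is what produces the extra $(\log n)^{1/4}$ factor responsible for the sharper $(\log n)^{3/4}$ rate beyond the naive $n^{-1/2}$ scaling. For $m \geq 3$ the cascade sum is dominated by the finest scale, giving the simpler $(\log n)^{1/m}$ rate with relatively little work; for $m=1$ one can bypass the hierarchy entirely by using the order statistics of the empirical distribution restricted to each connected component of $\M$ (parametrized by arc length), which yields the $(\log n)^{1/2}n^{-1/2}$ scaling directly via standard one-dimensional uniform spacings estimates. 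Once the matching is constructed, defining $T_n$ cellwise and noting that Christ cubes inherit Ahlfors-regularity from the uniform measure on $\M$ completes the proof.
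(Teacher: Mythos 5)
The paper does not prove Theorem~\ref{thm:inftyOTEmpiricalBound}; it imports it verbatim from Theorem~2 of \citep{Trillos2019_Error} (for $m \geq 2$) and Theorem~1 of \citep{Liu2019} (for $m=1$), so there is no in-paper proof to compare your sketch against. With that caveat, your outline is a correct high-level description of the hierarchical matching argument of Ajtai--Koml\'os--Tusn\'ady / Leighton--Shor type that underlies these results: dyadic decomposition from scale $O(1)$ down to $r_n$, Bernstein concentration of cell counts with a union bound, and a Hall/Strassen argument promoting the count estimates to a transport map with displacement $O(r_n)$; your diagnosis that the extra $(\log n)^{1/4}$ at $m=2$ comes from the borderline-summable cascade, while $m\geq 3$ is dominated by the finest scale and $m=1$ reduces to order statistics, is also accurate.

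One structural difference worth flagging: the cited Garc\'ia Trillos--Slep\v{c}ev proof does \emph{not} construct Christ-type dyadic cubes intrinsically on $\M$. It first establishes the rate on a Euclidean cube (or torus), leaning on the existing Leighton--Shor / Shor--Yukich / Talagrand matching results, and then transfers to a compact manifold by a bi-Lipschitz decomposition of $\M$ into finitely many cube-like pieces together with a gluing step across the interfaces. Your plan to work intrinsically is viable in principle (compactness and smoothness give the required Ahlfors regularity of $\vol$), but it requires independently developing a dyadic cube system on $\M$ and reproving the cascade estimates in the curved setting, which is substantially more work than the reduction-to-the-cube route. Finally, be careful with the Hall step: the clean formulation is Strassen's criterion for $W_\infty$, namely $\mu_n(B^{r}) \geq \mu(B)$ for all Borel $B$ where $B^r$ is the $r$-enlargement, and the bipartite matching between cells and sample points must explicitly reconcile the integer counts $N(C)$ with the real targets $n\mu(C)$; that bookkeeping is exactly where the multi-scale ``borrow from the parent cell'' absorption lives, and glossing over it hides the hardest part of the argument at $m=2$.
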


\subsubsection{Proof of Theorem \ref{thm:ollivier1}}
\label{sec:ProofTheorem1}
We now provide a proof of Theorem~\ref{thm:ollivier1}, which we restate below for convenience.
\begin{theorem}[Ollivier~\citep{ollivier2009ricci}]\label{thm:ollivier}
\begin{equation}
    \Big\vert \k_\Mc (x,y) -  \frac{\veps^2}{2(m+2)} \Ric_x(v)  \Big\vert \leq  \left(  C \veps^2 d_\Mc(x,y) + C' \veps^3
    \right)\; ,
\end{equation}
where $y$ is a point on the geodesic from $x$ in direction $v\in T_x\M$ and $\veps$ is the radius of Ollivier balls. 
\end{theorem}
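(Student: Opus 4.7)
The plan is to bound $W_1(\mu_x^\M, \mu_y^\M)$ from both sides using a transport plan built from the parallel transport map $\mathcal{P}$ of \eqref{eqn:ParallelF}, and to evaluate the resulting integral via the Levi-Civita parallelogram formula stated just above the theorem. First I would introduce the pushforward $\nu := \mathcal{P}_\sharp \mu_x^\M$, a probability measure on $B_\M(y, \veps)$. A standard Jacobi-field computation along the geodesic joining $x$ to $y$ shows that the density of $\nu$ with respect to $\mu_y^\M$ equals $1 + O(\veps^2 + \veps\, d_\M(x, y))$ uniformly on $B_\M(y, \veps)$, so Corollary \ref{cor:InftyOT} gives $W_\infty(\nu, \mu_y^\M) \leq C(\veps^3 + \veps^2 d_\M(x, y))$. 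Gluing this via Lemma \ref{lem:SplittingMass} with the deterministic coupling $(\mathrm{Id}, \mathcal{P})_\sharp \mu_x^\M$ yields a coupling $\pi \in \Gamma(\mu_x^\M, \mu_y^\M)$ whose $W_1$-cost exceeds $\int d_\M(\tilde x, \mathcal{P}(\tilde x))\, d\mu_x^\M(\tilde x)$ by at most the same $O(\veps^3 + \veps^2 d_\M(x, y))$.

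Next I would compute that integral. The Levi-Civita parallelogram proposition supplies the pointwise expansion
\[ d_\M(\tilde x, \mathcal{P}(\tilde x)) = d_\M(x, y)\bigl(1 - \tfrac{1}{2}\, |\log_x(\tilde x)|^2\, K(v, w) + |\log_x(\tilde x)|^2 \, O(\veps + d_\M(x, y))\bigr), \]
with $w = \log_x(\tilde x)$. Pulling the integral back to $B(0, \veps) \subset T_x \M$ via $\log_x$, whose Jacobian with respect to the Riemannian volume is $1 + O(\veps^2)$, reduces the problem to evaluating the Euclidean average
\[ A := \frac{1}{\vol(B(0, \veps))} \int_{B(0, \veps)} \tfrac{1}{2}\, |w|^2\, K(v, w)\, dw. \]
Writing $w = r\omega$ with $\omega = \cos\theta\, v + \sin\theta\, u$ and $u \in S^{m-2} \subset v^\perp$, the radial integral $\int_0^\veps r^{m+1}\, dr$ separates from the angular integral. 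The angular piece, evaluated using the identity $g(R(v, \omega)v, \omega) = \sin^2\theta\, g(R(v, u)v, u)$ together with $\int_{S^{m-2}} g(R(v, u)v, u)\, du = \vol(S^{m-2})\, \Ric_x(v)$ (which follows from definition \eqref{eq:ric-curv}), reproduces (up to the normalization convention for $\Ric$) the coefficient $\frac{\veps^2}{2(m+2)} \Ric_x(v)$, producing the desired upper bound on $W_1$ and therefore the lower bound on $\kappa_\M(x, y)$.

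For the matching upper bound on $\kappa_\M(x, y)$, I would invoke Kantorovich-Rubinstein duality, Theorem \ref{thm:KantorovichRubinstein}, with the $1$-Lipschitz test function $f(z) = -d_\M(z, H_y)$, where $H_y$ denotes the totally geodesic hypersurface through $y$ orthogonal to $v$. A direct expansion of $f(\tilde x) - f(\mathcal{P}(\tilde x))$ using the same Jacobi-field estimates reproduces the same leading order and matches the upper bound above. The main obstacle is the uniform bookkeeping of remainder terms: the $O(\veps^3 + \veps^2 d)$ errors from the Levi-Civita formula, the Jacobian of $\mathcal{P}$, the exponential chart, and the Jacobi-field comparisons must all be controlled uniformly in $x, y, \tilde x$. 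This uniformity rests on compactness and smoothness of $\M$ through uniform bounds on curvature and its derivatives; once those are in hand, the angular and radial integrations are mechanical.
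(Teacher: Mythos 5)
Your plan is the same as the paper's (which is Ollivier's original argument): push $\mu_x^\M$ forward by the parallel-transport map $\mathcal{P}$, correct the small density mismatch via an $\infty$-OT estimate, integrate the Levi-Civita parallelogram formula to bound $W_1$ from above, and then get a matching lower bound on $W_1$ via Kantorovich--Rubinstein with a distance-to-hypersurface test function. The spherical integration you carry out is also essentially what the paper is implicitly invoking, and your remark that the coefficient comes out right ``up to the normalization convention for $\Ric$'' correctly flags a real inconsistency between the paper's definition \eqref{eq:ric-curv} (which carries a $\tfrac{1}{m-1}$ factor) and Ollivier's original unnormalized convention, which is what makes the constant $\tfrac{1}{2(m+2)}$ appear.

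However, two of your intermediate claims are too weak for the theorem as stated, and both need repair. First, the density estimate. You assert that $\nu = \mathcal{P}_\sharp \mu_x^\M$ has density $1 + O(\veps^2 + \veps\, d_\M(x,y))$ with respect to $\mu_y^\M$, leading via Corollary~\ref{cor:InftyOT} to $W_\infty(\nu,\mu_y^\M) \le C(\veps^3 + \veps^2 d_\M(x,y))$. This is not sharp enough: the density must degenerate to exactly $1$ as $y \to x$ (since $\mathcal{P}$ becomes the identity), and the correct estimate is $|\rho_{xy}-1| \le C_\M\, d_\M(x,y)\,\veps^2$, giving $W_\infty(\nu,\mu_y^\M) \le C_\M\, d_\M(x,y)\,\veps^3$. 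If you carry your weaker version through $\kappa_\M = 1 - W_1/d_\M(x,y)$, you inherit an error of order $\veps^3/d_\M(x,y) + \veps^2$; the $\veps^2$ term has no $d$ factor, is the same order as the leading Ricci term itself, and blows the $C\veps^2 d_\M(x,y) + C'\veps^3$ bound of the theorem. With the sharp density bound, the error from this step is $\veps^3$, as required.

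Second, the Kantorovich test function. You take $f(z) = -d_\M(z, H_y)$; as written this is nonpositive on both sides of the hypersurface and vanishes on $H_y$, so $\int f\, d\mu_y^\M$ equals $-\E|\langle \log_y(\tilde y), P_{xy}v\rangle| + O(\veps^3)$, a quantity of order $-\veps$. This makes $\int f\,d\mu_y - \int f\,d\mu_x$ smaller than $d_\M(x,y)$ by $O(\veps)$, an error much too large for the $O(\veps^3)$ conclusion. What is needed is the \emph{signed} distance, positive on the $y$-side and negative on the $x$-side, so that the odd part cancels and $\int f\,d\mu_y^\M$ is $O(\veps^3)$ rather than $O(\veps)$; this is exactly what the paper's \eqref{eqn:functionF} does. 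One further technical point: a totally geodesic hypersurface through $y$ orthogonal to $v$ generically does not exist on a Riemannian manifold; the paper sidesteps this by taking the image $\exp_x(E_0)$ of a tangent hyperplane and extending via McShane--Whitney, which you would need to do as well.
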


\begin{proof}[Proof of Theorem \ref{thm:ollivier}]
Let $x,y \in \M$ and let $\mathcal{P}: B_\M(x,\veps) \rightarrow B_\M(y, \veps)$ be the map from \eqref{eqn:ParallelF}. Then, according to Proposition 6 in \citep{ollivier2009ricci}, we have

 \begin{equation}
 d_\M(\tilde x , \mathcal{P}( \tilde x )) = d_\M(x,y) \left( 1 - d_\M(x, \tilde x)^2\left(\frac{K(v,w)}{2}  + O( d_\M(x,y) + \veps) \right) \right),
 \label{eqn:DistLeviCivita}
 \end{equation}
where $v= \frac{\log_x(y)}{|\log_x(y)|}$, $w=\log_x(\tilde x)$, and $K(v,w)$ is the sectional curvature in the plane generated by the vectors $v,w \in T_x\M$. The distance between $\tilde x$ and $\tilde y:= \mathcal{P}(\tilde x)$ is almost equal to the distance between $x$ and $y$, and the correction term of order 3 is precisely captured by the sectional curvature between vectors $v$ and $w$.

The measure $\mathcal{P}_{\sharp} \mu_x^\M$, although not exactly equal to $ \mu_y^\M$, has a density $\rho_{xy}$ with respect to $\mu_y^\M$ that satisfies
\[  \sup_{\tilde y \in B_\M(y,\veps) }  | \rho_{xy} (\tilde y) - 1 | \leq C_\M  d_\M(x,y) \veps^2 ,   \]
as follows from the discussion in the proof of Proposition 6 in section 8 in \citep{ollivier2009ricci}. Combining the above estimate with Corollary \ref{cor:InftyOT} we get 
\[  W_\infty( \mathcal{P}_{\sharp} \mu_x^\M , \mu_y^\M ) \leq C_\M \veps \sup_{\tilde y \in B_\M(y,\veps) }  | \rho_{xy} (\tilde y) - 1 |  \leq C_\M d_\M(x,y) \veps^3.\] 
We can thus find a map $T_y: B_\M(y, \veps) \rightarrow B_\M(y,\veps)$ such that $T_{y \sharp} ( {\mathcal{P}}_{\sharp} \mu_x^\M) = \mu_y^\M $ and such that 
\begin{equation}
\label{eq:AuxT_y}
   \sup_{y' \in B_\M(y, \veps )} | y' - T_y(y') | \leq C_\M' d_\M(x,y) \veps^3; 
\end{equation} 
see \citep{ChampionExistenceOTMaps}. If we now define the function $\F: B_\M(x,\veps) \rightarrow B_\M(y,\veps)$ as
\begin{equation}
    \F:= T_y \circ \mathcal{P},
    \label{eqn:DefG}
\end{equation}
we see that 
\begin{equation*}
\F_{\sharp} \mu_x^\M = \mu_y^\M.
\end{equation*}
Moreover, for every $\tilde x \in B_\M(x, \veps)$ we have
 \begin{equation}
 d_\M(\tilde x , \F( \tilde x )) = d_\M(x,y) \left( 1 - d_\M(x, \tilde x)^2\left(\frac{K(v,w)}{2}  + O( d_\M(x,y) + \veps) \right) \right),
 \label{eqn:DistanceH}
 \end{equation}
where we recall $v= \frac{\log_x(y)}{|\log_x(y)|}$ and $w=\log_x(\tilde x)$. It follows that
  \begin{align}
  \begin{split}
    W_{1}(\mu_x^\M, \mu_y^\M)  & \leq \int_{B_\M(x, \veps)} d_\M(\tilde x , \F(\tilde x)) d \mu_x^\M(\tilde x) 
    \\& \leq d_\M(x, y ) -\veps^2 d_\M(x,y) \frac{\Ric_x(v)}{2(m+2)}  + O( d_\M(x,y)^2 \veps^2 + d_\M(x,y) \veps^3 ), 
    \end{split}
    \label{eq:AuxTheorem1}
  \end{align} 
and in turn
\[ \frac{1}{\veps^2}\kappa_\M(x,y)  + O(d_\M(x,y)  + \veps) \geq  \frac{\Ric_x(v)}{2(m+2)},   \]
 giving a lower bound for $\kappa_\M$.

To obtain a matching upper bound, we follow \citep{ollivier2009ricci} and construct a function $f: \M \rightarrow \R$ that is $1$-Lipschitz with respect to $d_\M(\cdot,\cdot )$ and that almost realizes the sup in the Kantorovich-Rubinstein dual formulation of the 1-Wasserstein distance between $\mu_x^\M$ and $\mu^\M_y$ (see Theorem \ref{thm:KantorovichRubinstein}). To define this function, let us consider $0<r_0< \iota_\M$, and suppose that $\veps$ is small enough and $x,y$ are sufficiently close so that $B_\M(x,\veps), B_\M(y, \veps) \subseteq  B_\M(x, r_0/4) $. Let $E_0:= \{ v' \in T_x \M \: : \:  \langle v' , \log_x(y) \rangle=0  \}$. We first define $f : B_\M(x,r_0) \rightarrow \R$ by 
\begin{equation}
f(z):= \begin{cases}
\dist(z, \exp_x(E_0)) & \text{ if } \langle \log_x(z) , \log_x(y) \rangle \geq 0 \\
- \dist(z,\exp_x(E_0)) & \text{ if } \langle \log_x(z) , \log_x(y) \rangle < 0, 
\end{cases}
\label{eqn:functionF}
\end{equation}
which is $1$-Lipschitz in its domain, and then extend it to a global $1$-Lipschitz function using the {McShane-Whitney extension theorem}. Following the steps in section 8 in \citep{ollivier2009ricci} we can then see that
 \begin{equation}
 f( \F (\tilde x)  ) - f(\tilde x)=  d_\M(x,y) \left( 1 - d_\M(x, \tilde x )^2\left(\frac{K(v,w)}{2}  + O( d_\M(x,y) + \veps) \right) \right)
 \end{equation}
  for every $\tilde x \in B_\M(x,\veps) $, where we recall $v= \frac{\log_x(y)}{|\log_x(y)|}$, $w=\log_x(\tilde x)$ and $\F$ is as in \eqref{eqn:DefG}. Integrating with respect to $\mu_x^\M$ and using the Kantorovich-Rubinstein theorem we get
\begin{align}
\begin{split}
  W_1(\mu_x^\M, \mu_y^\M)   & \geq \int_\M f(\tilde y) d\mu^\M_y(\tilde y) -  \int_\M f(\tilde x) d\mu^\M_x(\tilde x)
  \\& = \int_\M f(\F(\tilde x)) d\mu^\M_x(\tilde x) -  \int_\M f(\tilde x) d\mu^\M_x(\tilde x)
  \\& =  d_\M(x, y ) -\veps^2 d_\M(x,y) \frac{\Ric_x(v)}{2(m+2)}  + O( d_\M(x,y)^2 \veps^2 + d_\M(x,y) \veps^3 ),
  \end{split}
  \label{eq:FormulafRic}
\end{align}
from where we can now obtain
\[ \frac{1}{\veps^2}\kappa_\M(x,y)   \leq  \frac{\Ric_x(v)}{2(m+2)} + O(d_\M(x,y)  + \veps).   \]
\end{proof}


\nc

\subsubsection{Some additional lemmas}

In this section we collect a few lemmas that we use in the proof of our main results.

\begin{lemma}
\label{lem:add1}
There is a constant $c$ such that for all small enough $\veps_0>0$ and all $x\in \M$ we have
\[ v_m (1 - c\veps_0^2) \veps_0^m  \leq  \vol( B_\M(x, \veps_0) ) \leq  v_m (1 + c\veps_0^2) \veps_0^m,  \]
where $v_m$ is the volume of the $m$-dimensional Euclidean ball.

Moreover, if $\veps_0>0$ is such that $W_\infty(\mu, \mu_n) \leq \frac{1}{2} \veps_0$, then
\[  \mu(B_\M(x, \veps_0 - W_\infty(\mu, \mu_n) )) \leq   \mu_n(B_\M(x,\veps_0)) \leq \mu(B_\M(x,\veps_0+ W_\infty(\mu, \mu_n))).  \]

\end{lemma}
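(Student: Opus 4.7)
The plan is to treat the two assertions separately: the first is a standard local volume expansion for geodesic balls on a compact Riemannian manifold, while the second is a short transport argument using the $\infty$-OT map supplied by Theorem~\ref{thm:inftyOTEmpiricalBound}.

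For the first inequality, I would use that once $\veps_0$ is smaller than the injectivity radius $\iota_\M$ (bounded below uniformly by a positive constant because $\M$ is smooth and compact), the exponential map at $x$ is a diffeomorphism between $B(0,\veps_0)\subseteq T_x\M$ and $B_\M(x,\veps_0)$, so
\[
\vol(B_\M(x,\veps_0)) \;=\; \int_{B(0,\veps_0)} \sqrt{\det g_{ij}(v)}\, dv.
\]
In normal coordinates at $x$ one has the classical expansion $\sqrt{\det g_{ij}(v)} = 1 - \tfrac{1}{6}\,\Ric_x(v,v) + O(|v|^3)$. Integrating over $B(0,\veps_0)$, the first-order term vanishes by symmetry, the $\Ric_x(v,v)$-contribution produces a correction of size $O(\veps_0^{m+2})$, and the remainder is bounded by $C\veps_0^{m+3}$. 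Dividing through by $v_m\veps_0^m$ gives a multiplicative error of order $\veps_0^2$. The compactness of $\M$, together with the smoothness of the metric and of $\exp$, yields a single constant $c$ that works uniformly in $x$.

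For the second inequality, I would pick a map $T_n:\M\to\M$ provided by Theorem~\ref{thm:inftyOTEmpiricalBound}, satisfying $T_{n\sharp}\mu=\mu_n$ and $\sup_{z\in\M} d_\M(T_n(z),z)\leq W_\infty(\mu,\mu_n)$. In particular $\mu_n(B_\M(x,\veps_0)) = \mu(T_n^{-1}(B_\M(x,\veps_0)))$. The triangle inequality for $d_\M$ then gives the two-sided set-theoretic inclusion
\[
B_\M(x,\veps_0 - W_\infty(\mu,\mu_n)) \;\subseteq\; T_n^{-1}(B_\M(x,\veps_0)) \;\subseteq\; B_\M(x,\veps_0 + W_\infty(\mu,\mu_n)),
\]
since if $z$ lies in the left-hand ball then $d_\M(T_n(z),x)\leq d_\M(T_n(z),z)+d_\M(z,x)\leq \veps_0$, while if $T_n(z)\in B_\M(x,\veps_0)$ then $d_\M(z,x)\leq \veps_0+W_\infty(\mu,\mu_n)$. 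Taking $\mu$-measure across this chain of inclusions yields the stated inequality; the assumption $W_\infty(\mu,\mu_n)\leq \veps_0/2$ simply ensures that the inner ball has positive radius so that the left-hand side is well-defined and non-trivial.

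No serious obstacle is anticipated. The only subtlety in Part 1 is bookkeeping to make sure the constant $c$ can be chosen independently of the base point $x$, which is handled by compactness, and the only subtlety in Part 2 is remembering that one must invoke the map $T_n$ rather than an arbitrary optimal coupling so as to push forward sets rather than merely bounding cost integrals.
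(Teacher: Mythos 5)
Your proof is correct and fills in exactly the details the paper invokes by reference: for the first inequality the paper simply cites~\citep{Trillos2019_Error} (item~1.35), and for the second it asserts that the bound is immediate from the definition of $W_\infty(\mu,\mu_n)$; your normal-coordinate volume expansion and the $\infty$-OT-map argument supply precisely those standard steps. Two inessential remarks: the expansion of $\sqrt{\det g_{ij}}$ in normal coordinates has no first-order term at all (it begins at quadratic order, which is exactly why the multiplicative error is $O(\veps_0^2)$), so the phrase ``the first-order term vanishes by symmetry'' is unnecessary, though harmless; and a near-optimal coupling $\pi\in\Gamma(\mu,\mu_n)$ works just as well as the transport map $T_n$ for the second part, since one can write $\mu_n(B_\M(x,\veps_0))=\pi(\M\times B_\M(x,\veps_0))$ and use the fact that $\supp(\pi)$ lies within distance $W_\infty(\mu,\mu_n)$ of the diagonal, so your aside that a map is needed rather than a coupling is not quite right, though it does not affect the validity of the argument you gave.
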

\begin{proof}
The first part is a standard result in differential geometry (see for example 1.35 in \citep{Trillos2019_Error}). The second part is immediate from the definition of $W_\infty(\mu, \mu_n)$.
\end{proof}




Given the assumed compactness and smoothness of the manifold $\M$, it is straightforward to show that there exists a constant $C_\M \geq 1$ such that
\begin{equation}
\label{eq:BallDifference}
 \mu_x^\M( B_\M(x,\veps) \setminus (B_\M(x,\veps) \cap B_\M(y,\veps) ) ) \leq C_\M  \frac{d_\M(x,y)}{ \veps }
 \end{equation}
for all $x, y \in \M$ and all $\veps \leq \iota_\M/2$; indeed, this type of estimate is easily proved in Euclidean space and can be extended to the manifold setting for all small enough $\veps$ using coarse bounds on the metric distortion by the exponential map around a given point on the manifold. With the aid of standard concentration inequalities we can get a similar estimate to \eqref{eq:BallDifference} when $x \in \X$ and $\mu_x^\M $ is replaced with the empirical measure $\mu_x^G$. This is the content of the next lemma.

\begin{lemma} 
Provided that $\frac{W_\infty(\mu, \mu_n)}{\veps} $ is sufficiently small, we have
\[  \frac{\mu_n(B_\M(x,\veps) \setminus B_\M(x,\veps) \cap B_\M(y,\veps) )}{\mu_n(B_\M(x,\veps))} \leq  \frac{\psi(0) c_0}{6}  \]
for all $x,y\in \X$ satisfying $0<d_\M(x,y) \leq  \frac{\psi(0)}{12 C_\M} \delta_0$.
\label{lem:OverlapBalls2}
\end{lemma}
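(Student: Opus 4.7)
The plan is to reduce the empirical statement to its continuum analogue \eqref{eq:BallDifference} by transferring mass from $\mu_n$ to $\mu$ through the $\infty$-OT coupling of Theorem \ref{thm:inftyOTEmpiricalBound}. Write $W:=W_\infty(\mu,\mu_n)$, which, under our assumption $W/\veps \ll 1$, is much smaller than $\veps$ (in particular, smaller than $\veps/2$).

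First, let $T_n$ be a map as in \eqref{eq:InftyOTMap}, so that $T_{n\sharp}\mu=\mu_n$ and $d_\M(T_n(z),z)\le W$ for every $z\in\M$. For any Borel set $A\subseteq\M$ one has $T_n^{-1}(A)\subseteq\{z:d_\M(z,A)\le W\}$, and therefore
\[
\mu_n(A)=\mu(T_n^{-1}(A))\le \mu\bigl(\{z\in\M:d_\M(z,A)\le W\}\bigr).
\]
Applied to $A:=B_\M(x,\veps)\setminus B_\M(y,\veps)$, a quick triangle-inequality check shows that the $W$-neighborhood of $A$ is contained in $B_\M(x,\veps+W)\setminus B_\M(y,\veps-W)$. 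Combined with the lower bound $\mu_n(B_\M(x,\veps))\ge \mu(B_\M(x,\veps-W))$ from Lemma \ref{lem:add1}, this reduces the claim to bounding
\[
\frac{\mu\bigl(B_\M(x,\veps+W)\setminus B_\M(y,\veps-W)\bigr)}{\mu(B_\M(x,\veps-W))}.
\]

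Next, I would split the numerator via the pointwise inclusion
\[
B_\M(x,\veps+W)\setminus B_\M(y,\veps-W)\subseteq\bigl(B_\M(x,\veps+W)\setminus B_\M(x,\veps-W)\bigr)\cup\bigl(B_\M(x,\veps-W)\setminus B_\M(y,\veps-W)\bigr),
\]
which is routine: if a point lies outside $B_\M(x,\veps-W)$, it sits in the annulus; otherwise it contributes to the second piece. The annular contribution is controlled by the volume comparison in Lemma \ref{lem:add1}, which yields
\[
\mu\bigl(B_\M(x,\veps+W)\setminus B_\M(x,\veps-W)\bigr)\le C\,\frac{W}{\veps}\,\mu(B_\M(x,\veps-W)),
\]
with $C$ depending only on $\M$; here one simply expands $(\veps\pm W)^m$ and uses the curvature correction $1+O(\veps^2)$. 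The second piece is exactly of the form handled by the continuum estimate \eqref{eq:BallDifference} at scale $\veps-W$, giving
\[
\frac{\mu\bigl(B_\M(x,\veps-W)\setminus B_\M(y,\veps-W)\bigr)}{\mu(B_\M(x,\veps-W))}\le C_\M\,\frac{d_\M(x,y)}{\veps-W}.
\]

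Combining the three displays, the ratio under study is bounded by $C\,W/\veps + C_\M\,d_\M(x,y)/(\veps-W)$. Using the hypothesis $d_\M(x,y)\le \frac{\psi(0)}{12 C_\M}\delta_0=\frac{\psi(0)c_0}{12 C_\M}\veps$, the second term is at most $\frac{\psi(0)c_0}{12}(1+O(W/\veps))$. Since $W/\veps$ is assumed arbitrarily small, the first term together with the $O(W/\veps)$ correction can be absorbed so that the whole expression does not exceed $\frac{\psi(0)c_0}{6}$, proving the lemma.

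The only delicate point is making the transfer from $\mu_n$ to $\mu$ tight enough: one must choose the outer and inner radii $\veps\pm W$ so that the annular error $O(W/\veps)$ is strictly smaller than the available slack $\frac{\psi(0)c_0}{12}$ between the target $\frac{\psi(0)c_0}{6}$ and the continuum bound $\frac{\psi(0)c_0}{12}$. This is exactly why the statement requires $W/\veps$ to be sufficiently small; all remaining steps are consequences of Lemma \ref{lem:add1}, \eqref{eq:BallDifference}, and Theorem \ref{thm:inftyOTEmpiricalBound}.
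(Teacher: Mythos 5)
Your argument is correct and it follows the exact same strategy as the paper: the paper's proof of this lemma is a one-line statement that the claim "follows from \eqref{eq:BallDifference}, Lemma \ref{lem:add1}, and the smallness assumption of $W_\infty(\mu,\mu_n)$ relative to $\veps$," and you supply a careful implementation of precisely that reduction, transferring the empirical ratio to the continuum one via the $\infty$-OT map, splitting off the annular contribution, invoking \eqref{eq:BallDifference} at scale $\veps - W$, and absorbing the $O(W/\veps)$ corrections into the factor-of-two slack between $\frac{\psi(0)c_0}{12}$ and $\frac{\psi(0)c_0}{6}$.
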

\begin{proof}
This result follows from \eqref{eq:BallDifference}, Lemma \ref{lem:add1}, and the smallness assumption of $W_\infty(\mu, \mu_n)$ relative to $\veps$.
\end{proof}



\nc

 \subsection{Proofs of global curvature bounds}


%

We start by proving Theorem \ref{thm:GlobalBounds1}.

\begin{proof}[Proof of Theorem \ref{thm:GlobalBounds1}]

Thanks to Lemma \ref{lem:CurvatureGeodesicProp} it is enough to prove the lower bound under the assumption that $x,y\in \X$ are two distinct points such that  $d_{G,\M}(x,y) = \tilde d_{G,\M}(x,y) \leq \delta_1$. Notice that $\tilde d_{G, \M}(x,y) = \delta_0 \psi( d_\M(x,y)/\delta_0)$ and thus we may further split the analysis into different cases determined by the value of $d_\M(x,y)$. It is worth recalling that in the setting considered here we have $B_G(x, \veps)= B_\M(x, \veps) \cap \X$ and $B_G(y, \veps)= B_\M(y, \veps) \cap \X$.


\textbf{Case 1:} $0<d_\M(x,y) \leq  \frac{\psi(0)}{12 C_\M} \delta_0$, where $C_\M$ is as in \eqref{eq:BallDifference}.  

We may assume without the loss of generality that $| B_G(x,\veps)| \geq | B_G(y,\veps)| $, for otherwise we can swap the roles of $x$ and $y$. From Lemma \ref{lem:OverlapBalls2} it follows 
\[\mu_x^G( B_\M(x,\veps) \setminus (B_\M(x,\veps) \cap B_\M(y,\veps) ) )  \leq \frac{\psi(0) c_0}{6}.\] 
Also, for all $\tilde x \in B_G(x,\veps) $ and $\tilde y \in B_G(y, \veps)$ we have
\[  d_{G, \M}(\tilde x, \tilde y ) \leq d_{G, \M}( x, \tilde x )  + d_{G, \M}( \tilde x, \tilde y )  +  d_{G, \M}( \tilde y, y )  \leq 2 \delta_0 \psi(\veps /\delta_0)  + \delta_0 \leq 3\veps.  \]
By selecting a coupling between $\mu_x^G$ and $\mu_y^G$ that leaves all mass of $\mu_x^G$ in $B_\M(x,\veps) \cap B_\M(y,\veps)$ fixed, the above estimates imply
\[ W_{1,G}(\mu_x^G, \mu_y^G) \leq   3 \veps  \mu_x^G( B_\M(x, \veps) \setminus (B_\M(x, \veps) \cap B_\M(y, \veps) ) ) \leq  \frac{1}{2}\psi(0) \delta_0 . \]
In addition, since by definition we have $d_{G,\M}(x,y) = \tilde{d}_{G, \M}(x,y) \geq \delta_0 \psi(0)$, it follows
\[ \frac{\kappa_G(x,y)}{\veps^2} = \frac{1}{\veps^2} \left(1- \frac{W_{1,G}(\mu_x^G, \mu_y^G)}{d_{G, \M}(x,y)}\right) \geq \frac{1}{2\veps^2}. \]

\textbf{Case 2:} $\frac{\psi(0)}{12 C_\M} \delta_0 \leq d_\M(x,y) \leq \delta_1 - 2 \veps $. 

We start by finding a good upper bound for $W_{1,G}(\mu_x^G, \mu_y^G )$. Without the loss of generality we can assume that 
\[ a:= \frac{\mu( B_\M(x,\veps))}{ \mu_n( B_G(x,\veps) ) } \leq \frac{\mu( B_\M(y,\veps))}{ \mu_n( B_G(y,\veps) ) }   ,\]
for otherwise we can swap the roles of $x$ and $y$.
We split the measure $\mu_{x}^G$ into
\[ \mu_x^G = \mu_x^G\lfloor_{B_\M(x, \veps')} +  \mu_x^G \lfloor_{B_\M(x,\veps) \setminus B_\M(x,\veps')}  ,\]
where $\veps' := \veps - 3W_\infty(\mu, \mu_n)- C_\M' \veps^4$ and where the measures on the right hand side represent the restrictions of $\mu_x^G$ to $B_\M(x,\veps')$ and $B_\M(x,\veps ) \setminus B_\M(x, \veps')$, respectively. We decompose the measure $\mu_y^G$ as 
\[ \mu_y^G= \mu_{y,1}^G + \mu_{y,2}^G \]
for two positive measures $\mu_{y,1}^G$ and $\mu_{y,2}^G$ that we define below, the first of which will be suitably coupled with $\mu_x^G\lfloor_{B_\M(x, \veps')} $ while the second one will be coupled with $\mu_x^G \lfloor_{B_\M(x,\veps) \setminus B_\M(x,\veps')} $.

Precisely, the measure $\mu_{y,1}^G$ is defined as
 \[ \mu_{y,1}^G :=  a T_{n \sharp} ( \F_{\sharp }  (  \mu_x^\M \lfloor_{T_n^{-1}(B_\M(x,\veps'))} ) ),     \]
where $T_n : \M \rightarrow \X$ is an $\infty$-OT map between $\mu$ and $\mu_n$ as defined in Theorem \ref{thm:inftyOTEmpiricalBound} and $\F$ is the map defined in \eqref{eqn:DefG}. We will show that $\mu_{y,1}^G \leq \mu^G_y $, which would allow us to take $\mu_{y,2}^G := \mu_y^G - \mu_{y,1}^G$. To see that indeed $\mu_{y,1}^G \leq \mu^G_y $, we first observe that $T_n^{-1}( B_\M(x,\veps') )$ is contained in $B_\M(x, \veps - 2 W_\infty(\mu, \mu_n) - C_\M' \veps^4 )$. From \eqref{eq:AuxT_y} and ii) in Assumption \ref{assump} it follows that $\F (T_n^{-1}( B_\M(x,\veps') )) \subseteq B_\M(y, \veps - 2W_\infty(\mu, \mu_n) ) $. Finally, $T_n(\F (T_n^{-1}( B_\M(x,\veps') ))) \subseteq B_\M(y, \veps- W_\infty(\mu, \mu_n)) $. From this we see that the support of $\mu_{y,1}^G$ is contained in $B_\M(y, \veps- W_\infty(\mu, \mu_n))$. Now, let $A\subseteq B_\M(y, \veps- W_\infty(\mu, \mu_n)) $.  We see that
\begin{align*}
   \mu_{y,1}^G(A) &= a \mu_x^\M \left( T_n^{-1}( B_\M(x,\veps') ) \cap \F^{-1}( T_n^{-1} (A)) \right) 
   \\& \leq a \mu_x^\M( \F^{-1}(T_n^{-1}(A)))
   \\& = a\mu_y^\M( T_n^{-1}(A) )
   \\& = \frac{a}{\mu(B_\M(y,\veps))} \mu( T_n^{-1}(A) ) 
   \\&= \frac{a}{\mu(B_\M(y,\veps))} \mu_n( A )
   \\&= \frac{a \mu_n(B_G(y,\veps))}{\mu(B_\M(y,\veps))} \mu_y^G( A )
   \\& \leq \mu_y^G(A).
\end{align*}
In the above, the second equality follows from the fact that $T_n^{-1}(A) \subseteq B_\M(y,\veps)$ and the fact that $\F_{\sharp} \mu_x^\M = \mu_y^\M$; the fourth equality follows from the fact that $T_{n \sharp} \mu = \mu_n$; the last inequality follows from the definition of $a$. Since $A$ was arbitrary, we conclude that indeed $\mu_{y,1}^G \leq \mu_{y}^G$.

Next, we show that $\mu_x^G\lfloor_{B_\M(x, \veps')}$ and $\mu_{y,1}^G$ have the same total mass and then construct a suitable coupling between them. Indeed, on one hand we have $\mu_x^G\lfloor_{B_\M(x, \veps')}(\X)= \mu_x^G ( B_\M(x, \veps') ) = \frac{\mu_n( B_\M(x, \veps') )}{\mu_n( B_G(x, \veps) )}$. On the other hand, 
\begin{align*}
 \begin{split}
  \mu_{y,1}^G(\X) & = a \mu_x^\M( T_n^{-1}(B_\M(x, \veps'))) = \frac{a}{\mu( B_\M(x, \veps) )} \mu ( T_n^{-1}(B_\M(x,\veps'))) 
  \\& = \frac{a}{\mu( B_\M(x, \veps) )} \mu_n ( B_\M(x,\veps'))   =  \frac{\mu_n( B_\M(x, \veps') )}{\mu_n( B_G(x, \veps) )},   
 \end{split}   
\end{align*}
which implies that the measures indeed have the same total mass. To construct a suitable coupling $\pi_1^G \in \Gamma( \mu_x^G\lfloor_{B_\M(x, \veps')}  , \mu_{y,1}^G) $, we first introduce the measure
\[ \tilde{\nu}_1 := \frac{a}{ \mu(B_\M(x,\veps))} \mu \lfloor_{ T_n^{-1}(B_\M(x,\veps') ) }. \]
Observe that $\tilde{\pi}_1:= ( T_n \times Id  )_{\sharp} \tilde{\nu}_1  $
belongs to $\Gamma( \mu_x^G\lfloor_{B_\M(x, \veps')}  , \tilde \nu_1)$ and 
\[  d_\M(\tilde x,\tilde x') \leq W_\infty(\mu, \mu_n), \quad \forall (\tilde x , \tilde x') \in \supp(\tilde \pi_1).   \]
Also, $\tilde{\pi}_2:= (Id \times \F)_{\sharp} \tilde{\nu}_1 \in \Gamma(\tilde{\nu}_1, \F_{\sharp} \tilde{\nu}_1)$ satisfies
\[ d_\M(\tilde x' , \tilde y') = d_\M(x,y) \left( 1 - d_\M(x, \tilde x')^2\left(\frac{K(v,w')}{2}  + O( d_\M(x,y) + \veps) \right) \right)\]
for all $(\tilde x', \tilde y ') \in \supp(\tilde{\pi}_2)$, according to \eqref{eqn:DistanceH}; in the above, $v= \frac{\log_x(y)}{|\log_x(y)|}$ and $w'=\log_x(\tilde x')$. Finally,  $\tilde{\pi}_3 := (Id \times T_n)_{\sharp}( \F_{\sharp} \tilde{\nu}_1 ) \in \Gamma( \F_{\sharp } \tilde{\nu}_1, \mu_{y,1}^G  ) $ satisfies
\[  d_\M(\tilde y',\tilde y) \leq W_\infty(\mu, \mu_n), \quad \forall (\tilde y ' , \tilde y) \in \supp(\tilde \pi_3).  \]
We can then define $\pi_1^G \in \Gamma( \mu_x^G\lfloor_{B_\M(x, \veps')}  , \mu_{y,1}^G) $ as
\[  \pi_{1}^G := T_{1,4 \sharp} \Pi. \]
where $\Pi$ is the glueing of the couplings $\tilde \pi_1, \tilde \pi_2, \tilde \pi_3$ as defined in \eqref{eq:Glueing} and $T_{1,4}$ is the projection onto the first and fourth coordinates introduced when we defined the glueing of couplings.  

We now proceed to estimate $W_{1,G}(\mu_x^G, \mu_y^G)$ from above using the coupling $\pi_{1}^G$. First, let $(\tilde x,\tilde x', \tilde y', \tilde y) \in \supp(\Pi) $. From the above discussion we have
\[ d_\M(\tilde x, \tilde y) = d_\M(x,y) \left( 1 - d_\M(x,\tilde x ')^2\left(\frac{K(v,w')}{2}  + O( d_\M(x,y) + \veps) \right) \right) + O(W_\infty(\mu, \mu_n)).   \] In particular, given the smallness of $W_\infty(\mu, \mu_n)$ relative to $\veps$ and the fact that $d_\M(x, y) \leq \delta_1 - 2 \veps$ we can assume without the loss of generality that $d_\M(\tilde x , \tilde y) \leq \delta_1$ and thus 
\[ d_{ G, \M}(\tilde x , \tilde y) \leq   \tilde d_{ G, \M}(\tilde x , \tilde y)  = \delta_0 \psi( d_\M(\tilde x,\tilde y)/\delta_0). \]
Using the fact that $\psi$ is non-decreasing combined with a simple Taylor expansion of $\psi$ around $d_\M( x, y)/\delta_0 $, we can bound the right hand side of the above by 
\begin{align*}
  \begin{split}
    \delta_0 \psi\left( \frac{d_\M(x , y)}{\delta_0} \right) & + \psi'\left( \frac{d_\M( x , y)}{\delta_0}\right) \left(   d_\M(\tilde x , \tilde y) - d_\M(x,y) \right)  
    \\ & + \frac{1}{2\delta_0} \lVert \psi'' \rVert_\infty (   d_\M(\tilde x , \tilde y) - d_\M(x,y) )^2
    \\ \leq  \delta_0 \psi\left( \frac{d_\M(x , y)}{\delta_0} \right) & - \frac{1}{2}\psi'\left( \frac{d_\M(x,y)}{\delta_0}\right) d_\M(x,y) \veps^2 K(v, w')
    \\& + O(\veps^4 + W_\infty(\mu, \mu_n));
  \end{split}  
\end{align*}
notice that $\lVert \psi'' \rVert_\infty$, the supremum norm of the second derivative of $\psi$, is finite by Assumption \ref{assumpPsi}; notice also that this second order correction term is of order $O( W_\infty(\mu, \mu_n)^2/\veps + \veps^5)$, which is much smaller than $O(\veps^4 + W_\infty(\mu, \mu_n))$. 

From the above estimates we get
\begin{align}
\begin{split}
  & W_{1,G}(\mu_x^G \lfloor_{B_\M(x, \veps')}, \mu_{y,1}^G )  \leq \int d_{G,\M}(\tilde x , \tilde y) d \pi_1^G(\tilde x , \tilde y)
 =  \int d_{G,\M}(\tilde x, \tilde y) d \Pi(\tilde x ,\tilde x', \tilde y', \tilde y)
\\& \leq \delta_0\psi(d_\M(x,y)/\delta_0) - \frac{1}{2} \psi'\left( \frac{d_\M(x,y)}{\delta_0} \right) d_\M(x,y)  \int d_\M(x, \tilde x')^2 K(v, \log_x(\tilde x')) d \tilde{\nu}_1(\tilde x ')   \\  & \quad  + C( \veps^4 + W_\infty(\mu, \mu_n))
\\& \leq \delta_0\psi(d_\M(x,y)/\delta_0) - \frac{1}{2} \psi'\left( \frac{d_\M(x,y)}{\delta_0} \right) d_\M(x,y) \int  d_\M(x, \tilde x')^2 K(v, \log_x(\tilde x')) d \mu_x^\M(\tilde x ')   \\  & \quad  + C( \veps^4 + W_\infty(\mu, \mu_n))
\\& \leq \delta_0\psi(d_\M(x,y)/\delta_0) - \psi'\left( \frac{d_\M(x,y)}{\delta_0} \right) d_\M(x,y) \veps^2 \frac{\Ric_x(v)}{2(m+2)}   \\  & \quad  + C( \veps^4 + W_\infty(\mu, \mu_n)).
\end{split}
   \label{eq:Bound_pi_1}
\end{align}
In the second to last inequality we have substituted the integral with respect to $\tilde{\nu}_1 $ with an integral with respect to $\mu_x^\M$ by introducing an error that is of much smaller order than $W_\infty(\mu, \mu_n) + \veps^4$, thanks to Lemma \ref{lem:add1};  in the last inequality we have used \eqref{eqn:DistanceH} and \eqref{eq:AuxTheorem1}.


Next, we find a bound for  $W_{1,G}(  \mu_x^G \lfloor_{B_\M(x,\veps) \setminus B_\M(x,\veps')}, \mu_{y,2}^G)$. We observe that for every $\tilde x \in B_G(x,\veps)$ we have $d_{G,\M}(\tilde x,x) \leq \max \{ \delta_0 , d_\M(\tilde x, x )  \}\leq \veps$. Likewise, for every $\tilde y \in B_G(y,\veps)$ we have $d_{G,\M}(\tilde y,y) \leq \max \{ \delta_0 , d_\M(\tilde y, y )  \}\leq \veps$. Additionally, $d_{G,\M}(x,y) \leq d_\M(x,y) \leq \delta_1  = c_1\veps$. It follows that $d_{G, \M}(\tilde x , \tilde y) \leq C\veps$ for all $\tilde x \in B_G(x,\veps) $ and $\tilde y \in B_G(y,\veps)$.
This implies
\begin{align*}
    W_{1,G}(\mu_x^G \lfloor_{B_\M(x,\veps) \setminus B_\M(x,\veps')}, \mu_{y,2}^G) & \leq C\veps \frac{\mu_n( B_\M(x,\veps) \setminus   B_\M(x,\veps') )}{\mu_n( B_\M(x,\veps) )}
    \\ & \leq C (W_\infty(\mu, \mu_n) + \veps^4), 
\end{align*}
thanks to Lemma \ref{lem:add1}.

We may now invoke Lemma \ref{lem:SplittingMass} and \eqref{eq:Bound_pi_1} to get 
\begin{align}
\begin{split}
   W_{1,G}(\mu_x^G, \mu_y^G ) &\leq W_{1,G}(\mu_x^G \lfloor_{B_\M(x, \veps')}, \mu_{y,1}^G )  + W_{1,G}(\mu_x^G \lfloor_{B_\M(x,\veps) \setminus B_\M(x,\veps')}, \mu_{y,2}^G)  
   \\& \leq \delta_0\psi(d_\M(x,y)/\delta_0) - \psi'\left( \frac{d_\M(x,y)}{\delta_0} \right) d_\M(x,y) \veps^2 \frac{\Ric_x(v)}{2(m+2)}   \\  & \quad  + C( \veps^4 + W_\infty(\mu, \mu_n)).
   \end{split}
\label{eqn:AuxTheorem1WassersteinBoundCase2}
\end{align}
Recalling that $d_{G, \M}(x,y) = \delta_0\psi( d_\M(x,y)/\delta_0) \geq \delta_0 \psi(0) = c_0\psi(0) \veps$, we deduce 
\begin{align}
\begin{split}
\frac{\kappa_G(x,y)}{\veps^2} =  \frac{1}{\veps^2}\left(1 - \frac{W_{1,G}(\mu_x^G, \mu_y^G)}{d_{G,\M}(x,y)} \right) &  \geq   \psi'\left(\frac{d_\M(x,y)}{\delta_0}\right) \frac{d_\M(x,y) }{\delta_0\psi\left( \frac{d_\M(x,y)}{\delta_0 } \right) }\frac{\Ric_x(v)}{2(m+2)}  
\\& - C\left (\veps + \frac{W_\infty(\mu, \mu_n)}{\veps^3}\right) .
\end{split}
    \label{aux:Main1Case2}
\end{align}

\nc

\medskip





Under the assumption that $\Ric_x(v) \geq 2(D+2) K\geq 0$, the first term on the right hand side can be bounded from below by $\psi'(0) \frac{c_0\psi(0) }{12c_1C_\M} K$.
If, on the other hand, $K <0$, then the first term on the right hand side of \eqref{aux:Main1Case2} can be bounded from below by $\frac{c_1}{c_0\psi(0)} K$.

\nc

 


  \textbf{Case 3:} Here we assume that $\delta_1 \geq  d_\M(x,y) \geq \delta_1 -  2 \veps. $ 
  
  According to Assumption \ref{assump} we have $d_\M(x,y) \geq 2 \delta_0$ and in particular $d_{G, \M}(x,y) = \delta_0 \psi\left( \frac{d_\M(x,y)}{\delta_0} \right) = d_\M(x,y)$. Let $\overline{x}$ be the midpoint between $x$ and $y$ along a (manifold) minimizing geodesic connecting them; $\overline{x}$ may not be a point in $\X$, but this is unimportant for our argument. Now, notice that $d_\M(x, \overline{x})= \frac{1}{2} d_\M(x, y) \in [ 2 \delta_0, \delta_1- 2 \veps] $ and also $d_\M(\overline{x},y)= \frac{1}{2} d_\M(x, y) \in  [ 2 \delta_0, \delta_1- 2 \veps]. $ Using the triangle inequality for $W_{1,G}$ and recalling Remark \ref{rem:GeneralizedOllivierBalls} we get:
 \[ W_{1,G}(\mu_x^G, \mu_y^G) \leq W_{1,G}(\mu_x^G, \mu_{\overline x}^G) + W_{1,G}(\mu_{\overline x}^G, \mu_y^G).\]
Then
\begin{align*}
 \kappa_G(x,y) &  = 1 - \frac{W_{1,G}(\mu_x^G, \mu_y^G)}{d_{G,\M}(x,y)}
 \\& \geq  1 - \frac{W_{1,G}(\mu_x^G, \mu_y^G)}{d_{\M}(x,y)}
 \\& \geq 1- \frac{W_{1,G} (\mu_x^G, \mu_{\overline x}^G) + W_{1,G} ( \mu_{\overline x}^G, \mu_y^G)   }{d_\M(x,y)} 
 \\ & = \frac{1}{ 2} \left( 1 - \frac{W_{1,G} (\mu_x^G, \mu_{\overline x}^G)   }{d_\M(x,\bar x)}  \right)   +  \frac{1}{2} \left( 1 - \frac{W_{1,G} (\mu_{\bar x}^G, \mu_{y}^G)   }{d_\M(\bar x,y)}  \right).
\end{align*}
 Using \eqref{eqn:AuxTheorem1WassersteinBoundCase2} twice (which can be applied regardless of whether $\overline{x} \in \X$ or not), and noticing that $\psi(d_\M(x, y) / 2 \delta_0 ) =  d_\M(x, y) / 2 \delta_0  $ and $\psi'( d_\M(x, y) / 2 \delta_0 ) =1$,
 we can lower bound each of the terms on the right hand side of the above expression by $ \frac{1}{2}(s_K\veps^2 K  - C\left (\veps^3 + \frac{W_\infty(\mu, \mu_n)}{\veps}\right) )$.

 \end{proof}
 
\begin{remark}
\label{rem:OnPsi}

We would like to highlight the different ways in which $W_{1,G}(\mu_x^G, \mu_y^G)$ is bounded in Cases 1 and 2 in the previous proof. Indeed, in Case 1, when $d_\M(x,y)$ is very small, we choose a coupling between $\mu_x^G$ and $\mu_y^G$ that leaves most mass fixed, taking advantage of the fact that the overlap between $B_G(x,\veps)$ and $B_G(y, \veps)$ is large in this case. In Case 2, on the other hand, the coupling that we use mimics the coupling in the proof of Theorem \ref{thm:ollivier1}, where all mass moves parallel to the geodesic connecting $x$ and $y$. Notice that we do need to split into these two cases: in going from \eqref{eqn:AuxTheorem1WassersteinBoundCase2} to the final lower bound in Case 2 we need to have a lower bound on $d_\M(x, y)$ that is $O(\veps)$ (for the case $K>0$).

Notice also that the profile function $\psi$ can not be taken to be the identity map for all $t>0$. Indeed, when we divide $W_{1,G}(\mu_x^G, \mu_y^G)$ by $\delta_0 \psi(0)$ to go from \eqref{eqn:AuxTheorem1WassersteinBoundCase2} to \eqref{aux:Main1Case2}, we need $\psi(0) >0$ to guarantee that the term $ \frac{1}{\delta_0 \psi(0 ) \veps^2}(\veps^4+ W_\infty(\mu, \mu_n))$ is indeed small regardless of how small $d_\M(x,y)$ may be. Since the minimum interpoint distance in a data set is much smaller than $O(1/n^{1/m})$, the distance $d_\M(x,y)$ may indeed be quite small. This forces us to consider a profile function $\psi$ that bends away, smoothly (so that the first order Taylor expansion of $\psi$ can reveal the desired curvature term), from the diagonal. The factor $s_K$ in the lower bound \eqref{thm:GlobalBounds1} arises when lower bounding $\kappa_G$ for $x,y$ for which $ O(\delta) \leq   d_\M(x,y) \leq \delta_0$ . We can think of this range as the transition from the Riemannian lengthscale, where $\M$'s geometry can be captured, to a lengthscale where the RGG exhibits complete graph behavior. A somewhat similar separation of scales in an RGG was used in \citep{garcia2020gromov} to study the convergence of discrete Wasserstein spaces defined over RGGs toward the standard Wasserstein space; see the discussions in Remark 1.16. and section 2.1 in \citep{garcia2020gromov}.
\end{remark}
\nc

We now proceed to prove Theorem \ref{thm:GlobalBounds2}. The proof is very similar to the one for Theorem \ref{thm:GlobalBounds1} and thus we will mostly provide details for the steps that need some adjustments. In particular, we highlight the reason for requiring $\hat{d}_g$ to approximate $d_\M$ up to an error of order four; see Remark \ref{rem:WhyBetterDistApprox} below.

\begin{proof}[Proof of Theorem \ref{thm:GlobalBounds2}]

Thanks to Lemma \ref{lem:d_GDistance} we can assume, without the loss of generality, that $x,y \in \X$ are such that $d_{G , \X}(x,y) = \tilde{d}_{G, \X}(x,y) = \delta_0\psi\left( \frac{\hat{d}_g(x,y)}{\delta_0} \right) \leq \delta_1$. As in Theorem \ref{thm:GlobalBounds1} we split our analysis into three different cases. We recall that Ollivier balls in this setting take the form:
\begin{equation}
    B_G(x, \veps) = \{  \tilde x \in \X \: : \: \hat{d}_g(x, \tilde x ) \leq \veps \}, \quad  B_G(y, \veps) = \{  \tilde y \in \X \: : \: \hat{d}_g(y, \tilde y ) \leq \veps \}. 
    \label{eq:ReminderOllivierBalls}
\end{equation}

\textbf{Case 1:} $0<\hat d_g(x,y) \leq  \frac{\psi(0)}{12 C_\M} \delta_0$, where $C_\M$ is as in \eqref{eq:BallDifference}.

We may assume without the loss of generality that $| B_G(x,\veps)| \geq | B_G(y,\veps)| $, for otherwise we can swap the roles of $x$ and $y$. For $\veps_{\pm} := \veps \pm (C_1\beta \veps^3 + C_2 \veps^4)$, thanks to \eqref{eqn:ApproxDistance_HigherOrder}  and  Lemmas \ref{lem:add1} and \ref{lem:OverlapBalls2} we can assume 
\[  \mu_x^G( B_G(x, \veps) \setminus (B_G(x, \veps) \cap B_G(y, \veps) ) ) \leq  \frac{36 \psi(0) c_0}{12^2}. \]
Now, for all $\tilde x \in B_G(x,\veps) $ and $\tilde y \in B_G(y, \veps)$ we have
\[  d_{G, \X}(\tilde x, \tilde y ) \leq d_{G, \X}( x, \tilde x )  + d_{G, \X}( \tilde x, \tilde y )  +  d_{G, \X}( \tilde y, y )  \leq 2 \delta_0 \psi(\veps /\delta_0)  + \delta_0 \leq 3\veps.  \]
By selecting a coupling between $\mu_x^G$ and $\mu_y^G$ that leaves all mass of $\mu_x^G$ in $B_G(x,\veps) \cap B_G(y,\veps)$ fixed, the above estimates imply
\[ W_{1,G}(\mu_x^G, \mu_y^G) \leq   3 \veps  \mu_x^G( B_G(x, \veps) \setminus (B_G(x, \veps) \cap B_G(y, \veps) ) ) \leq  \frac{3}{4}\psi(0) \delta_0 . \]
In addition, since by definition we have $d_{G,\X}(x,y) = \tilde{d}_{G, \X}(x,y) \geq \delta_0 \psi(0)$, it follows
\[ \frac{\kappa_G(x,y)}{\veps^2} = \frac{1}{\veps^2} \left(1- \frac{W_{1,G}(\mu_x^G, \mu_y^G)}{d_{G, \X}(x,y)}\right) \geq \frac{1}{4\veps^2}. \]

\textbf{Case 2:}   $ \frac{\psi(0)}{12 C_\M} \delta_0 \leq \hat{d}_g(x,y) \leq \delta_1 - 2 \veps $.

As in the proof of Theorem \ref{thm:GlobalBounds1} we may further assume, without the loss of generality, that 
\[ a:= \frac{\mu( B_\M(x,\veps))}{ \mu_n( B_G(x,\veps) ) } \leq \frac{\mu( B_\M(y,\veps))}{ \mu_n( B_G(y,\veps) ) }.\]
The measure $\mu_x^G$ is decomposed as
\[ \mu_x^G = \mu_x^G\lfloor_{B_\M(x, \veps')} +  \mu_x^G \lfloor_{B_G(x,\veps) \setminus B_\M(x,\veps')}  ,\]
where now $\veps' := \veps - 3 W_\infty(\mu, \mu_n) - C_1 \beta \veps^3 - (C_2+C_\M')\veps^4$ and, we recall, $B_G$ is as in \eqref{eq:ReminderOllivierBalls}. Notice that the additional terms in the definition of $\veps'$, relative to how $
\veps'$ was defined in the proof of Theorem \ref{thm:GlobalBounds1}, account for the discrepancy between $d_\M$ and $\hat{d}_g$. With this definition we have $B_\M(x, \veps' ) \cap \X \subseteq B_G(x,\veps)$.

We define the measure $\mu_{y,1}^G$ as
 \[ \mu_{y,1}^G :=  a T_{n \sharp} ( \F_{\sharp }  (  \mu_x^\M \lfloor_{T_n^{-1}(B_\M(x,\veps'))} ) ),     \]
for $T_n, \F$ and $\mu_x^\M$ as in Case 2 in the proof of Theorem \ref{thm:GlobalBounds1}. We can follow the same steps there to conclude that $\mu_{y,1}^G \leq \mu_y^G$ and then define $\mu_{y,2}^G:= \mu_y^G - \mu_{y,1}^G$. Also, we may introduce analogous couplings $\Pi$ and $\pi_1^G = T_{1,4\sharp}  \Pi \in \Gamma( \mu_x^G\lfloor_{B_\M(x, \veps')} , \mu_{y,1}^G)$ for which:
\[ d_\M(\tilde x, \tilde y) = d_\M(x,y) \left( 1 - d_\M(x, \tilde x ')^2\left(\frac{K(v,w')}{2}  + O( d_\M(x,y) + \veps) \right) \right) +  O(W_\infty(\mu, \mu_n))   \]
for all $(\tilde x, \tilde x' ,\tilde y ', \tilde y ) \in \supp(\Pi)$. In turn, we can use the approximation error estimates for  $\hat{d}_g-d_\M$ to obtain
\[ \hat{d}_g(\tilde x, \tilde y) = \hat{d}_g(x,y) \left( 1 -  d_\M(x, \tilde x ')^2 \left(\frac{K(v,w')}{2}  + O( \hat{d}_g(x,y) + \veps) \right) \right) + O( \beta \veps^3 + \veps^4 + W_\infty(\mu, \mu_n))   \]
for all $(\tilde x, \tilde x' ,\tilde y ', \tilde y ) \in \supp(\Pi)$, and $d_{G, \X}(\tilde x , \tilde y) \leq \tilde{d}_{G, \X}(\tilde x , \tilde y) = \delta_0 \psi( \hat{d}_g(x,y)/\delta_0)$ for all $(\tilde x , \tilde y) \in \supp(\pi_1^G)$. From this we can conclude that
\begin{align}
\begin{split}
  & W_{1,G}(\mu_x^G \lfloor_{B_\M(x, \veps')}, \mu_{y,1}^G )  \leq \int d_{G,\X}(\tilde x , \tilde y) d \pi_1^G(\tilde x , \tilde y)
 =  \int d_{G,\X}(\tilde x, \tilde y) d \Pi(\tilde x ,\tilde x', \tilde y', \tilde y)
\\& \leq \delta_0\psi(\hat{d}_g(x,y)/\delta_0) - \psi'\left( \frac{\hat{d}_g(x,y)}{\delta_0} \right) \hat{d}_g(x,y) \veps^2 \frac{\Ric_x(v)}{2(m+2)}   \\  & \quad  + C( \beta \veps^3 + \veps^4 + W_\infty(\mu, \mu_n)).
\end{split}
   \label{eq:Bound_pi_1DataDriven}
\end{align}
In addition, 
\begin{align*}
    W_{1,G}(\mu_x^G \lfloor_{B_\M(x,\veps) \setminus B_\M(x,\veps')}, \mu_{y,2}^G) & \leq C\veps \frac{\mu_n( B_\M(x,\veps) \setminus   B_\M(x,\veps') )}{\mu_n( B_\M(x,\veps) )}
    \\ & \leq C (W_\infty(\mu, \mu_n) + \beta \veps^3 + \veps^4), 
\end{align*}
thanks to Lemma \ref{lem:add1}.

We may now invoke Lemma \ref{lem:SplittingMass} and \eqref{eq:Bound_pi_1DataDriven} to get 
\begin{align}
\begin{split}
   W_{1,G}(\mu_x^G, \mu_y^G ) &\leq W_{1,G}(\mu_x^G \lfloor_{B_\M(x, \veps')}, \mu_{y,1}^G )  + W_{1,G}(\mu_x^G \lfloor_{B_\M(x,\veps) \setminus B_\M(x,\veps')}, \mu_{y,2}^G)  
   \\& \leq \delta_0\psi(\hat{d}_g(x,y)/\delta_0) - \psi'\left( \frac{\hat{d}_g(x,y)}{\delta_0} \right) \hat d_g(x,y) \veps^2 \frac{\Ric_x(v)}{2(m+2)}   \\  & \quad  + C( \beta \veps^3 +\veps^4 + W_\infty(\mu, \mu_n)).
   \end{split}
\label{eqn:AuxTheorem1WassersteinBoundCase2DataDriven}
\end{align}
Recalling that $d_{G, \X}(x,y) = \delta_0\psi( \hat d_g(x,y)/\delta_0) \geq \delta_0 \psi(0) = c_0\psi(0) \veps$, we deduce 
\begin{align}
\begin{split}
\frac{\kappa_G(x,y)}{\veps^2} =  \frac{1}{\veps^2}\left(1 - \frac{W_{1,G}(\mu_x^G, \mu_y^G)}{d_{G,\X}(x,y)} \right) &  \geq   \psi'\left(\frac{\hat d_g(x,y)}{\delta_0}\right) \frac{\hat d_g(x,y) }{\delta_0\psi\left( \frac{\hat d_g(x,y)}{\delta_0 } \right) }\frac{\Ric_x(v)}{2(m+2)}  
\\& - C\left (\beta+ \veps + \frac{W_\infty(\mu, \mu_n)}{\veps^3}\right) .
\end{split}
\label{aux:Main1Case2DataDriven}
\end{align}
The lower bound \eqref{eqn:LowerboundTheorem2} now follows.

\nc

\textbf{Case 3:} Here we assume that $ \delta_1 -  2 \veps \leq   \hat{d}_g(x,y) \leq \delta_1$.

As in Case 3 in the proof of Theorem \ref{thm:GlobalBounds1} we consider the midpoint $\overline{x}$ between $x$ and $y$ (along the manifold geodesic). It is straightforward to see from 1 in Assumption \ref{assump:hatdg} that
\begin{equation}
   \left | \frac{\hat{d}_g(\overline{x}, y)}{\hat d_g(x,y)}  - \frac{1}{2} \right| \leq C \beta \veps^2 + C \veps^3, \quad \left | \frac{\hat{d}_g(\overline{x}, x)}{\hat d_g(x,y)}  - \frac{1}{2} \right| \leq C \beta \veps^2 + C \veps^3.  
   \label{aux:Main2Case3}
\end{equation}
Then
\begin{align*}
 \kappa_G(x,y) & \geq 1- \frac{W_{1,G} (\mu_x^G, \mu_{\overline x}^G) + W_{1,G} ( \mu_{\overline x}^G, \mu_y^G)   }{d_{G, \X}(x,y)} 
 \\ & = 1- \frac{W_{1,G} (\mu_x^G, \mu_{\overline x}^G) + W_{1,G} ( \mu_{\overline x}^G, \mu_y^G)   }{\hat{d}_g(x,y)} 
 \\ & \geq \frac{\hat{d}_g(x, \bar x)}{ \hat d_g(x,y) } \left( 1 - \frac{W_{1,G} (\mu_x^G, \mu_{\overline x}^G)   }{\hat d_g(x,\bar x)}  \right)   +  \frac{\hat d_g(\bar x, y)}{ \hat d_g(x,y) } \left( 1 - \frac{W_{1,G} (\mu_{\bar x}^G, \mu_{y}^G)   }{\hat d_g(\bar x,y)}  \right)
 \\ & - C \beta \veps^2 - C\veps^3.
\end{align*}
As in the proof of Theorem \ref{thm:GlobalBounds1}, we may now use \eqref{aux:Main1Case2DataDriven} to bound from below each of the terms $\left( 1 - \frac{W_{1,G} (\mu_x^G, \mu_{\overline x}^G)   }{\hat d_g(x,\bar x)}  \right) $ and $\left( 1 - \frac{W_{1,G} (\mu_y^G, \mu_{\overline x}^G)   }{\hat d_g(y,\bar x)}  \right) $ by $ \frac{1}{2}(s_K\veps^2 K  - C\left ( \beta \veps^2 + \veps^3 + \frac{W_\infty(\mu, \mu_n)}{\veps}\right) )$.



\end{proof}

\begin{remark}
\label{rem:WhyBetterDistApprox}
In the regime $\hat{d}_g(x,y) \sim   \veps$ , i.e., the regime corresponding to Case 2 in the proof of Theorem \ref{thm:GlobalBounds2}, we use the fact that $\hat{d}_g$ satisfies
\[ \hat{d}_g(x,y) = d_\M(x,y) + O(\beta \veps^3 + \veps^4), \]
whereas an approximation error of order $O(\veps^3)$ would have produced a lower bound on discrete curvature of the form $s_K K - C $ for some constant $C$ that may be larger than $s_K K$ itself. In particular, if the error was of order $O(\veps^3)$, the sign of the discrete lower bound would not be guaranteed to be consistent with the sign of the manifold's curvature lower bound. From our proof it thus seems that $\hat{d}_g(x,y)$ cannot be simply taken to be the Euclidean distance between $x$ and $y$ and a finer estimator seems to be necessary.
\end{remark}




  \subsection{Pointwise consistency}
  \label{sec:LocalConsistency}

  Next, we present the proof of our pointwise consistency results, i.e., Theorems \ref{thm:consistency-non-asymp} and \ref{thm:consistency-non-asymp-geo}. 

\begin{proof}[Proof of Theorem \ref{thm:consistency-non-asymp}]

Since $d_\M(x,y)$ is assumed to satisfy $2\delta_0 \leq d_\M(x,y) \leq \frac{1}{2}\delta_1 $, we may use Proposition \ref{prop:ComparisonMetrics},  \eqref{aux:Main1Case2} in Case 2 in the proof of Theorem \ref{thm:GlobalBounds1}, and the fact that $\psi(t)=t$ for $t \geq 1$ to conclude that 
\begin{align*}
\frac{\kappa_G(x,y)}{\veps^2} \geq   \frac{\Ric_x(v)}{2(m+2)}  - C\left (\veps + \frac{W_\infty(\mu, \mu_n)}{\veps^3}\right) .
\end{align*}
It thus remains to obtain matching upper bounds. 

For this purpose, let $f: \M \rightarrow \R  $ be the function defined in \eqref{eqn:functionF}. Using \eqref{eq:Comparisson1} in Proposition \ref{prop:ComparisonMetrics} and the fact that $f$ is $1$-Lipschitz with respect to $d_\M$, we conclude that the function $f$ restricted to $\X$ is $1$-Lipschitz with respect to $d_{G,\M}$. In turn, thanks to the Kantorovich-Rubinstein theorem (i.e., Theorem \ref{thm:KantorovichRubinstein}) we obtain
\[  \int f(\tilde y) d\mu_y^G(\tilde y) -  \int f(\tilde x) d\mu_x^G(\tilde x)  \leq W_{1,G}(\mu_x^G, \mu_y^\G).  \]
Using again the fact that $f$ is $1$-Lipschitz with respect to $d_\M$ we deduce
\[  \left| \int f(\tilde x) d\mu_x^G(\tilde x) -  \int f(\tilde x) d\mu_x^\M(\tilde x) \right| \leq W_1(\mu_x^G, \mu_x^\M), \]
and 
\[  \left| \int f(\tilde x) d\mu_y^G(\tilde y) -  \int f(\tilde y) d\mu_y^\M(\tilde y) \right| \leq W_1(\mu_y^G, \mu_y^\M). \]
Putting together the above inequalities we conclude that 
\[  \int f(\tilde y) d\mu_y^\M(\tilde y) - \int f(\tilde x) d\mu_x^\M(\tilde x)   \leq   W_{1,G}(\mu_x^G, \mu_y^G)   +  W_1(\mu_x^G, \mu_x^\M) + W_1(\mu_y^G, \mu_y^\M).   \]
Using now \eqref{eq:FormulafRic}, we can lower bound the left hand side of the above expression and conclude that 
\begin{equation}
\label{eq:AuxUpperBounds1}
   1 -\veps^2  \frac{\Ric_x(v)}{2(m+2)}  \leq \frac{W_{1,G}(\mu_x^G, \mu_y^G)}{d_\M(x,y)} + \varphi   , 
\end{equation}
where 
\[  \varphi :=  C (d_\M(x,y) \veps^2 +  \veps^3 )  +  \frac{W_1(\mu_x^G, \mu_x^\M)}{d_\M(x,y)} + \frac{W_1(\mu_y^G, \mu_y^\M)}{d_\M(x,y)}.  \]
Using the fact that $d_{G,\M}(x,y) = d_\M(x,y)$ by Proposition \ref{prop:ComparisonMetrics}, and rearranging terms, we conclude that
\[ \kappa_G(x,y) \leq \veps^2 \frac{\Ric_x(v)}{2(m+2)} + C (d_\M(x,y) \veps^2 + \veps^3  +  \frac{W_1(\mu_x^G, \mu_x^\M)}{d_\M(x,y)} + \frac{W_1(\mu_y^G, \mu_y^\M)}{d_\M(x,y)} ).    \]
To finish the proof, it remains to notice that the terms $W_1(\mu_x^G, \mu_x^\M)$ and $W_1(\mu_y^G, \mu_y^\M)$ can be bounded above by $C W_\infty(\mu, \mu_n) $, as it follows easily from an application of Lemma \ref{lem:add1}.

\end{proof}


\begin{proof}[Proof of Theorem \ref{thm:consistency-non-asymp-geo}]

From Case 2 in the proof of Theorem \ref{thm:GlobalBounds2} we have
\[ 
\frac{\kappa_G(x,y)}{\veps^2} \geq   \frac{\Ric_x(v)}{2(m+2)}  - C\left (\beta+ \veps + \frac{W_\infty(\mu, \mu_n)}{\veps^3}\right) ,\]
and thus it remains to obtain a matching upper bound.

First of all, notice that, thanks to \eqref{eqn:ApproxDistance_HigherOrder} and Assumption \ref{assump}, we can assume that $ 2 \delta_0 \leq d_\M(x,y) \leq \frac{\delta_1}{2}$. Now, from \eqref{eq:Comparisson2} and \eqref{eq:Comparisson3} in Proposition \ref{prop:ComparisonMetrics}  we have
\begin{equation}
\label{eq:AuxUpperBounds2}
   \left| \frac{d_\M(x,y)}{d_{G,\X}(x,y)}  -1   \right| \leq C(\beta \veps^2+\veps^3).
\end{equation}
On the other hand, thanks to \eqref{eq:Comparisson2}, it follows that the function $f$ from \eqref{eqn:functionF} restricted to $\X$ has Lipschitz constant, with respect to $d_{G, \X}$, no larger than $1+ C(\beta \veps^2 + \veps^3)$. This implies that 
\[ \int f(\tilde y) d\mu_y^G(\tilde y) -  \int f(\tilde x) d\mu_x^G(\tilde x)  \leq  (1+ C(\beta \veps^2 + \veps^3) )W_{1,G}(\mu_x^G, \mu_y^\G).\]
Proceeding as in the proof of Theorem \ref{thm:consistency-non-asymp} we can conclude that
\begin{equation*}
   1 -\veps^2  \frac{\Ric_x(v)}{2(m+2)}  \leq ( 1+ C(\beta \veps^2 +  \veps^3))\frac{W_{1,G}(\mu_x^G, \mu_y^G)}{d_\M(x,y)} + \varphi   , 
\end{equation*}
for $\varphi$ as in \eqref{eq:AuxUpperBounds1}. In turn, multiplying both sides of the above by $ \frac{1}{1+ C(\beta \veps^2 + \veps^3)} \frac{d_\M(x,y)}{d_{G, \X}(x,y)} $, using \eqref{eq:AuxUpperBounds2}, and rearranging terms, we conclude that
\[ \kappa_G(x,y) \leq \veps^2 \frac{\Ric_x(v)}{2(m+2)} + C (\beta \veps^2 + \veps^3  +  \frac{W_1(\mu_x^G, \mu_x^\M)}{d_\M(x,y)} + \frac{W_1(\mu_y^G, \mu_y^\M)}{d_\M(x,y)} ).    \]
The result now follows from the fact that, thanks to \eqref{eqn:ApproxDistance_HigherOrder} and Lemma \eqref{lem:add1},  each of the terms $W_1(\mu_x^G, \mu_x^\M)$ and $W_1(\mu_y^G, \mu_y^\M)$ is bounded by $C(W_\infty(\mu, \mu_n) + \beta \veps^3 + \veps^4)$.

\end{proof}

    \nc

   
    

\section{Applications}
\label{sec:Applications}

\subsection{Lipschitz contractivity of the graph heat kernel }
\label{sec:LaplaciansandRegression}

In this section we discuss some of the implications of our curvature lower bounds on the heat kernel associated to the \textit{unnormalized graph Laplacian} $\Delta_n$ induced by the graph $G=(\X, w_{\veps})$. We recall that the unnormalized graph Laplacian associated to $G$ is defined as
\begin{equation}
\label{def:UnnormalizedGraphLaplacian}
\Delta_n u(x) := \frac{1}{n\veps^{m+2}} \sum_{\tilde x \in \X} \omega_\veps(x,\tilde x ) ( u(x) - u(\tilde x)), \quad u\in L^2(\X).
\end{equation}
We will focus on the choice $w_\veps= w_{\veps, \M}$ (see \eqref{eqn:WeightsManifold}) for simplicity, but we remark that a lot of the discussion presented below can be adapted to the choice $w_\veps = w_{\veps, \X }$.

\begin{remark}
The operator $\Delta_n$ can be written in matrix form as
\[ \Delta_n = \frac{1}{\veps^{2}} (D - W), \]
where $W$ is the weight matrix induced by the rescaled weights $\frac{1}{n\veps^m }w_\veps$ and $D$ is the degree matrix associated to $W$. 
\end{remark}


$\Delta_n$ plays a central role in graph-based learning, where it is used to define algorithms for supervised, semi-supervised, and unsupervised learning; see, e.g., \citep{belkin2003laplacian,Nadler2006_Diffusion,VonLuxburg2007} for some discussion. There are several results in the literature that discuss the asymptotic convergence of $\Delta_n$ toward $\M$'s Laplace-Beltrami operator; see e.g. \citep{hein_graphs_2005} for pointwise convergence and \citep{burago2015graph,calder_improved_2020,CalderNGTLewicka,DUNSON2021282,Trillos2019_Error,WormelReich} for spectral convergence. Here we add upon the existing literature on graph Laplacians by providing novel contraction results that are implied by our curvature lower bounds. Specifically, we are interested in the behavior of the heat operator $e^{-t \Delta_n}$ as $t \rightarrow \infty$. The heat operator $e^{-t \Delta_n}$ can be defined via the spectral theorem or as the operator mapping an initial condition $u \in L^2(\X)$ to the solution at time $t$ of the graph heat equation:
\begin{align}
 \begin{cases}\partial_s u_s = -\Delta_n u_s,
 \\ u_0=u.
 \end{cases}
\end{align}

In what follows we abuse notation slightly and use $D(x)$ to denote the degree of $x\in \X$. Precisely,
\[ D(x) := \frac{1}{n \veps^m} \sum_{y \in \X} \eta\left( \frac{d_\M(x,y)}{\veps} \right),  \]
where $\eta(t):= \mathds{1}_{ t \leq 1}$. $D$ can be thought of as a kernel density estimator for the distribution used to sample the data set $\X$, in this case the uniform measure over $\M$. Indeed, one can show, via standard concentration arguments, that for every $r \in [\veps^2 , 1]$ we have
\begin{equation}
\label{eqn:DegreeApproximation}
   \max_{x \in \X} \left | \alpha_\M - D(x) \right| \leq Cr,  
\end{equation}
 with probability at least $1 - c(r\veps)^{-m } \exp(-c r^2 n \veps^m)$; e.g., see Corollary 3.7 in \citep{CalderNGTLewicka} for a closely related estimate. In the above, $\alpha_\M \vol(\M)$ is the volume of the $m$-dimensional Euclidean unit ball.

We study the evolution, along the heat flow, of the Lipschitz seminorm of a function $u: \X\rightarrow \R $ when $\X$ is endowed with the distance $d_{G}= d_{G, \M}$. This seminorm is defined as:
\begin{equation}
   \mathrm{Lip}_G(u):= \max_{x, y \in \X ,\:  x \not = y}  \frac{ | u(x) - u(y)| }{d_{G}(x,y) }.
\end{equation}

\begin{lemma}
	\label{cor:ContractionLipschitz}
For a given $u : \X \rightarrow \R  $ let
\[ \mathcal{A}u (x)  :  = \int u(\tilde x) d\mu_x^G(\tilde x ) = \frac{1}{ n\veps^m D(x)} \sum_{z \in \X} w_{\veps}(x,z) u(z) , \quad x \in \X .\]
Under the same assumptions as in Theorem \ref{thm:GlobalBounds1} it follows that
\[ \mathrm{Lip}_G(  \mathcal{A}u ) \leq  (1 - \veps^2 K_G)  \mathrm{Lip}_G(u) , \quad \forall u \in L^2(\X),  \]
where
\[ K_G := \min \left\{ s_K K  - C\left (\veps + \frac{W_\infty(\mu, \mu_n)}{\veps^3}\right), \frac{1}{2\veps^2} \right\}. \]
\end{lemma}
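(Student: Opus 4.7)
The plan is to combine the Kantorovich--Rubinstein duality (Theorem \ref{thm:KantorovichRubinstein}) with the global curvature lower bound of Theorem \ref{thm:GlobalBounds1} in a completely standard way. The operator $\mathcal{A}$ is nothing but averaging against the Markov kernel $\{\mu_x^G\}_{x\in\X}$ appearing in the definition of $\kappa_G$, so once curvature lower bounds in the sense of Ollivier have been established, the Lipschitz contraction of $\mathcal{A}$ is essentially immediate. I expect no real analytic obstacle; the bulk of the work has already been carried out in the proof of Theorem \ref{thm:GlobalBounds1}.

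Concretely, fix $u:\X\to\R$ and arbitrary distinct $x,y\in\X$. Set $L:=\mathrm{Lip}_G(u)$, so that $u$ is $L$-Lipschitz on $(\X,d_G)$. By linearity,
\[
\mathcal{A}u(x)-\mathcal{A}u(y)=\int u(\tilde z)\,d\mu_x^G(\tilde z)-\int u(\tilde z)\,d\mu_y^G(\tilde z).
\]
Applying Theorem \ref{thm:KantorovichRubinstein} to the $L$-Lipschitz function $u$ on the Polish metric space $(\X,d_G)$ yields
\[
\bigl|\mathcal{A}u(x)-\mathcal{A}u(y)\bigr|\le L\,W_{1,G}(\mu_x^G,\mu_y^G).
\]

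Next, by Theorem \ref{thm:GlobalBounds1}, under the stated assumptions we have, with the prescribed probability, $\kappa_G(x,y)\ge \veps^2 K_G$ for every pair $x,y\in\X$, which rewritten via the definition \eqref{eqn:GraphCurvature} of $\kappa_G$ gives
\[
W_{1,G}(\mu_x^G,\mu_y^G)\le \bigl(1-\veps^2 K_G\bigr)\,d_G(x,y).
\]
Combining the last two displays produces $|\mathcal{A}u(x)-\mathcal{A}u(y)|\le L(1-\veps^2 K_G)\,d_G(x,y)$. Dividing by $d_G(x,y)$ and taking the supremum over pairs $x\ne y$ in $\X$ yields $\mathrm{Lip}_G(\mathcal{A}u)\le (1-\veps^2 K_G)\,\mathrm{Lip}_G(u)$, which is the desired conclusion.

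The only subtlety worth flagging is logical rather than technical: the application of Theorem \ref{thm:GlobalBounds1} is what couples the validity of the contraction estimate to the high-probability event in that theorem, and the definition of $K_G$ in the statement is exactly engineered so that the right-hand side of the curvature lower bound matches $\veps^2 K_G$. No additional estimates beyond duality and the already-proven global lower bound are needed, which is why the lemma can be presented as a direct corollary of Theorem \ref{thm:GlobalBounds1}.
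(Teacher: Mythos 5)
Your proof is correct and follows the same route as the paper: combine the Kantorovich--Rubinstein dual representation of $W_{1,G}$ with the uniform curvature lower bound $\kappa_G(x,y)\ge \veps^2 K_G$ from Theorem~\ref{thm:GlobalBounds1} to bound $|\mathcal{A}u(x)-\mathcal{A}u(y)|$ by $(1-\veps^2 K_G)\,\mathrm{Lip}_G(u)\,d_G(x,y)$, then take the supremum. The paper's proof is essentially the same two-line chain of inequalities.
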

\begin{proof}
This is an immediate consequence of the definition of Ollivier Ricci curvature and the dual representation of the $1$-Wasserstein distance. Indeed, by Theorem \ref{thm:GlobalBounds1} and the Kantorovich-Rubinstein theorem, for all $x, y \in \X$ we have
\begin{align*}  
(1- \veps^2 K_G) d_G(x,y)  &  \geq  W_1(\mu_x^G , \mu_y ^G  )   \geq \frac{1}{\text{Lip}_G(u)} \left(  \int u( \tilde x ) d\mu_x^G(\tilde x )  -  \int u( \tilde y ) d\mu_y^G(\tilde y )    \right)
\\& =  \frac{1}{\text{Lip}_G(u)} ( \mathcal{A}u (x) - \mathcal{A}u(y)).
\end{align*}
Since the above is true for all $x, y \in \X $ we obtain the desired result.
\end{proof}

Using Lemma \ref{cor:ContractionLipschitz} we can establish the following contraction of $\text{Lip}_G$ along the heat flow $e^{-t\Delta_n}$.

\begin{theorem}
\label{thm:LipschitzContraction}
Under the same assumptions as in Theorem \ref{thm:GlobalBounds1}, and letting $K_G$ be defined as in Lemma \ref{cor:ContractionLipschitz}, for all $u: \X \rightarrow \R$ we have 
\begin{equation}
  \mathrm{Lip}_G (e^{-t \Delta_n} u  ) \leq \exp\left( -\left(K_G - \frac{4\lVert D - \alpha_\M \rVert_{L^\infty(\X)} \mathrm{diam}(G) } {c_0 \psi(0)\veps^3}\right) t \right) \mathrm{Lip}_G(u),
 \label{eqn:LipschitzContraction}
\end{equation}
where $\mathrm{diam}(G):= \max_{x, y \in \X} d_G(x,y)$.
\end{theorem}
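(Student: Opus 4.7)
The plan is to discretize the heat flow in time by forward Euler, control a single step via Lemma \ref{cor:ContractionLipschitz} combined with the closeness of the degree $D$ to the constant $\alpha_\M$, and then pass to continuous time via the Trotter product formula $e^{-t\Delta_n} = \lim_{n\to\infty}(I - (t/n)\Delta_n)^n$.

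First I would rewrite the Laplacian as $\Delta_n = (D/\veps^2)(I-\mathcal{A})$, so that for $\tau>0$ sufficiently small the Euler step $P_\tau := I - \tau\Delta_n$ acts as the pointwise convex combination $P_\tau u(x) = (1-\rho(x))\,u(x) + \rho(x)\,\mathcal{A}u(x)$ with $\rho(x) := \tau D(x)/\veps^2 \in [0,1]$. Expanding $P_\tau u(x)-P_\tau u(y)$ produces three contributions: $(1-\rho(x))(u(x)-u(y))$, $\rho(x)(\mathcal{A}u(x)-\mathcal{A}u(y))$, and the correction $(\rho(x)-\rho(y))(\mathcal{A}u(y)-u(y))$. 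Dividing by $d_G(x,y)$ and applying Lemma \ref{cor:ContractionLipschitz}, the first two contribute at most $(1-\rho(x)\veps^2 K_G)\mathrm{Lip}_G(u) = (1-\tau D(x)K_G)\mathrm{Lip}_G(u)$, which gives the main contraction.

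Next I would control the correction term. Using $|D(x)-D(y)| \leq 2\|D-\alpha_\M\|_{L^\infty(\X)}$ (triangle inequality through $\alpha_\M$), the trivial estimate $|\mathcal{A}u(y)-u(y)| \leq 2\|u\|_{L^\infty(\X)}$, and the universal lower bound $d_G(x,y) \geq \delta_0\psi(0) = c_0\psi(0)\veps$ for distinct $x,y$ (which is immediate from the definition of $\tilde d_G$ together with Lemma \ref{lem:d_GDistance}), the correction divided by $d_G(x,y)$ is bounded by $4\tau\|D-\alpha_\M\|_{L^\infty(\X)}\|u\|_{L^\infty(\X)}/(c_0\psi(0)\veps^3)$. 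To convert $\|u\|_{L^\infty(\X)}$ into $\mathrm{diam}(G)\mathrm{Lip}_G(u)$ \emph{uniformly in time}, I would use that $\Delta_n \vec{1}=0$ and that $\Delta_n$ is symmetric with respect to $\sum_x u(x)v(x)$; thus the heat flow commutes with the addition of constants and preserves the mean $\sum_{x\in\X} u_t(x)$. Normalizing the initial data to be mean-zero propagates this property along the flow, whence $\|u_t\|_{L^\infty(\X)} \leq \mathrm{osc}(u_t) \leq \mathrm{diam}(G)\mathrm{Lip}_G(u_t)$ for all $t\ge 0$.

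Combining these yields a per-step estimate $\mathrm{Lip}_G(P_\tau u) \leq (1-\tau R)\mathrm{Lip}_G(u)$ with $R$ equal to the rate in the statement. Iterating this bound $n$ times with $\tau = t/n$ and sending $n\to\infty$ via the Trotter product formula gives the desired exponential $e^{-Rt}$ in \eqref{eqn:LipschitzContraction}. The main obstacle is the uniform-in-time bound on the ratio $\|u_t\|_{L^\infty(\X)}/\mathrm{Lip}_G(u_t)$, which forces the mean-zero normalization above; a secondary bookkeeping point is upgrading the $x$-dependent rate $\tau D(x)K_G$ to a uniform rate, for which the $\|D-\alpha_\M\|_{L^\infty(\X)}$ correction is already of the right order to absorb the discrepancy.
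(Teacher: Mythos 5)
Your argument is correct and reaches the same conclusion as the paper, but via a genuinely different route. The paper works directly in continuous time: it picks a pair $(x,y)$ realizing $\mathrm{Lip}_G(e^{-t\Delta_n}u)$ at a fixed $t$, differentiates $\frac{(e^{-t\Delta_n}u(x)-e^{-t\Delta_n}u(y))^2}{2d_G(x,y)^2}$, decomposes $\Delta_n e^{-t\Delta_n}u(\cdot)$ into the constant-degree part $\frac{\alpha_\M}{\veps^2}(I-\mathcal{A})$ plus degree-fluctuation corrections controlled by $\lVert D-\alpha_\M\rVert_{L^\infty(\X)}$, invokes Lemma~\ref{cor:ContractionLipschitz} on the $\mathcal{A}$ term and the mean-zero normalization to control $\lVert e^{-t\Delta_n}u\rVert_{L^\infty(\X)}$, and then closes by Gronwall. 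You instead discretize in time via forward Euler $P_\tau=I-\tau\Delta_n$, exploit the same exact decomposition $\Delta_n=(D/\veps^2)(I-\mathcal{A})$ (so that for small $\tau$ the step is the pointwise convex combination $(1-\rho)u+\rho\,\mathcal{A}u$), derive a per-step contraction with identical ingredients — Lemma~\ref{cor:ContractionLipschitz}, $\lVert D-\alpha_\M\rVert_{L^\infty}$, $d_G\geq c_0\psi(0)\veps$, and the propagated mean-zero normalization — and recover the exponential by the Trotter limit. Both proofs use the same three structural inputs; what your route buys is that it sidesteps the one-sided-differentiability / envelope subtlety that the paper's Gronwall step glosses over (the maximizing pair $(x,y)$ can change with $t$, so the derivative computation at a fixed pair strictly speaking requires a Danskin-type argument), at the modest cost of having to verify that the constants absorb the error when replacing $D(x)$ by $\alpha_\M$ in the first-order term, a point you flag and handle correctly. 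One minor remark: both your derivation and the paper's own proof produce a leading rate $\alpha_\M K_G$ rather than the $K_G$ printed in the theorem statement; that factor-of-$\alpha_\M$ discrepancy sits in the paper's own passage from its last display to the stated inequality and is not an issue with your argument.
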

\begin{proof}
We start by noticing that inequality \eqref{eqn:LipschitzContraction} is invariant under addition of constants. This is because $e^{-t \Delta_n } (u + c) = e^{-t \Delta_n} u + c  $. Due to this, from now on we can assume without the loss of generality that $u$ is such that $\sum_{z\in \X} u(z)=0$.

Now, fix $t \in [0,\infty)$ and let $x,y \in \X$ be a pair of data points such that
\[  \frac{e^{-t \Delta_n} u (x) - e^{-t\Delta_n}u(y) }{d_G(x,y)} = \text{Lip}_G(e^{-t \Delta_n} u );  \]
such pair always exists because $\X$ is a finite set. Notice that
\begin{equation} 
\label{eqn:CotracAux1}
\frac{d}{dt} \frac{( e^{-t \Delta_n} u(x) - e^{-t \Delta_n} u(y) )^2}{2 d_G(x,y)^2} = \frac{ e^{-t \Delta_n} u(x) - e^{-t \Delta_n} u(y) }{d_G(x,y)^2} \cdot ( - \Delta_n e^{-t \Delta_n} u(x) + \Delta_n e^{-t \Delta_n } u(y)  ). 
\end{equation}
We rewrite the term $\Delta_n e^{-t \Delta_n} u(x)$ as
\begin{align*}
\frac{1}{\veps^2}D(x) e^{-t \Delta_n} u(x) - \frac{1}{n\veps^{m+2}}\sum_{z \in \X} w_\veps(x,z) e^{-t \Delta_n} u(z) &=  \frac{\alpha_\M}{\veps^2}e^{-t \Delta_n  } u (x) -\frac{\alpha_\M}{\veps^2} \mathcal{A} e^{- t \Delta_n}u(x)
\\&+ \frac{1}{\veps^2}(D(x) - \alpha_\M)  e^{-t \Delta_n} u(x) 
\\&+  \frac{1}{n\veps^{m+2}}\left( \frac{\alpha_\M - D(x)}{D(x)}  \right) \sum_{ z \in \X}w_\veps(x,z)  e^{-t \Delta_n} u(z).
 \end{align*}
We plug this expression (and the one corresponding to $\Delta_n e^{-t \Delta_n} u(y)$) in \eqref{eqn:CotracAux1} to conclude that
\begin{align*} 
\frac{d}{dt} \frac{( e^{-t \Delta_n} u(x) - e^{-t \Delta_n} u(y) )^2}{2 d_G(x,y)^2} & \leq - \alpha_\M \frac{(e^{-t \Delta_n} u(x) - e^{-t \Delta_n} u(y))^2  }{\veps^2 d_G(x,y)^2} + \frac{\alpha_\M}{\veps^2} \text{Lip}_G( e^{-t \Delta_n } u ) \text{Lip}_G(\mathcal{A} e^{-t \Delta_n }  u)  
\\& +  \frac{4}{\veps^2 \delta_0 \psi(0)} \text{Lip}_G(e^{-t \Delta_n } u) \lVert D -\alpha_\M \rVert_{L^\infty(\X)} \lVert e^{-t \Delta_n} u \rVert_{L^\infty(\X)} 
\\& \leq -\frac{ \alpha_\M (\text{Lip}_G(e^{-t \Delta_n} u ))^2 }{\veps^2}+ \frac{\alpha_\M}{\veps^2}( 1- \veps^2 K_G) (\text{Lip}_G(e^{-t \Delta_n} u ))^2
\\& + \frac{4}{\veps^2 \delta_0 \psi(0)} \text{Lip}_G(e^{-t \Delta_n } u) \lVert D -\alpha_\M \rVert_{L^\infty(\X)} \lVert e^{-t \Delta_n} u \rVert_{L^\infty(\X)},
\end{align*}
where in the second inequality we have used Corollary \ref{cor:ContractionLipschitz}. By assumption we have $\frac{1}{n } \sum_{z \in\X} e^{-t \Delta_n} u(z) = \frac{1}{n } \sum_{z \in \X}  u(z) =0 $ and thus it follows
\[ | e^{-t \Delta_n}u (z')| = \left|\frac{1}{n}\sum_{z \in \X } ( e^{-t \Delta_n}u (z') - e^{-t \Delta_n}u (z) )\right| \leq \text{diam}(G) \cdot \text{Lip}_G(e^{-t \Delta_n} u )   \]
for every $z'\in \X$. This allows us to bound $\lVert e^{-t \Delta_n} u \rVert_{L^\infty(\X)} \leq \text{diam}(G) \cdot \text{Lip}_G(e^{-t \Delta_n} u )$. Hence
\begin{align*} 
\frac{d}{dt} \frac{( e^{-t \Delta_n} u(x) - e^{-t \Delta_n} u(y) )^2}{ d_G(x,y)^2} & \leq - 2\left( \alpha_\M K_G  - \frac{4\lVert D - \alpha_\M \rVert_{L^\infty(\X)} \text{diam}(G)} {\veps^2 \delta_0 \psi(0)}\right) (\text{Lip}_G(e^{-t \Delta_n} u ))^2.
\end{align*}
Since $(x,y)$ was an arbitrary pair realizing $\text{Lip}_G(e^{-t \Delta_n} u)$ we conclude that
\[ \frac{d}{dt} (\text{Lip}_G(e^{-t \Delta_n} u  ))^2 \leq - 2\left( \alpha_\M K_G - \frac{4\lVert D -\alpha_\M \rVert_{L^\infty(\X)} \text{diam}(G) } {\veps^2 \delta_0  \psi(0)}\right) (\text{Lip}_G(e^{-t \Delta_n} u ))^2.    \]
Gronwall's inequality implies that 
\[  (\text{Lip}_G(e^{-t \Delta_n} u  ))^2 \leq \exp\left( -2\left(\alpha_\M{\kappa}_G - \frac{4\lVert D -\alpha_\M \rVert_{L^\infty(\X)} \text{diam}(G) } {\veps^2 \delta_0 \psi(0)}\right) t \right) (\text{Lip}_G(u  ))^2.  \]
Taking square roots on both sides we obtain the desired result. 
\end{proof}

\begin{remark}
In order for the exponent on the right hand side of \eqref{eqn:LipschitzContraction} to be negative, we certainly need $K_G $ to be strictly positive, which we can guarantee when $\M$ is a manifold with Ricci curvature bounded from below by a positive quantity and the assumptions of Theorem \ref{thm:GlobalBounds1} are satisfied. We also need to make sure that the quantity  $\frac{\lVert D -\alpha_\M \rVert_{L^\infty(\X)} } {\veps^3}$ is sufficiently small, which is implied by the assumptions in Theorem \ref{thm:GlobalBounds1} and the bound \eqref{eqn:DegreeApproximation}. The bottom line is that, when $\X$ is sampled from a manifold with positive Ricci curvature, then, under the assumptions in Theorem \ref{thm:GlobalBounds1}, for all large enough $n$ the Lipschitz seminorm $\mathrm{Lip}_G$ contracts along the heat flow associated to the unnormalized Laplacian for the graph $G=(\X , w_{\veps, \M})$.    
\end{remark}

\begin{remark}

We emphasize that $\Delta_n$ in Theorem \ref{thm:LipschitzContraction} is the unnormalized Laplacian of $G=(\X, w_{\veps,\M})$, which we recall depends on the geodesic distance over $\M$. While our curvature lower bound results do not allow us to say anything about Laplacians for RGGs with the Euclidean distance, one can certainly deduce adaptations of Theorem \ref{thm:LipschitzContraction} for proximity graphs built from slight modifications of the Euclidean distance. In particular, it is clear that a similar statement can be derived, mutatis mutandis, for the graph $G=(\X, w_{\veps, \X})$ endowed with distance $d_{G, \X}$.

\end{remark}

\begin{remark}
\label{rem:SpectralGaps} 
To contrast the content of Theorem \ref{thm:LipschitzContraction} with other contractivity results known in the literature, 
let $\lambda_{G}$ be the first nontrivial eigenvalue of $\Delta_n$. Using the spectral theorem one can easily show that for all $u \in L^2(\X)$
 \[ \lVert e^{-t \Delta_n} u - \overline{u}    \rVert_{L^2(\X)}^2 \leq  e^{- t \lambda_G} \lVert u - \overline{u} \rVert_{L^2(\X)}^2,  \]
 where $\overline{u} = \frac{1}{n}\sum_{z \in \X} u(z)$. Spectral consistency results for $\Delta_n$ like the ones in \citep{burago2015graph,calder_improved_2020,CalderNGTLewicka,DUNSON2021282,Trillos2019_Error,WormelReich} guarantee that $\lambda_G$ does not deteriorate as the graph $G$ is scaled up. Naturally, from these $L^2$ contraction estimates one can not derive Lipschitz contraction as in Theorem \ref{thm:LipschitzContraction} and our results in this paper thus provide novel results for the literature of graph Laplacians on data clouds. It is worth highlighting, however, that for $\lambda_G>0$ to remain bounded away from zero, one does not require $\M$'s Ricci curvature to be positive. 
\end{remark}

\begin{remark}
In the literature on graph based learning it is not unusual to replace a graph Laplacian with a version of it that is obtained by truncating its spectral decomposition, which in particular requires the use of an eigensolver. We emphasize that Theorem \ref{thm:LipschitzContraction} and its Corollary \ref{cor:LinftyDecay} below is a structural property that holds for the full Laplacian $\Delta_n$ and not for a truncation thereof.
\end{remark}

An immediate consequence of Theorem \ref{thm:LipschitzContraction} is the following.
\begin{corollary}
\label{cor:LinftyDecay}
Under the same assumptions as in Theorem \ref{thm:LipschitzContraction} we have
\begin{equation*}
\lVert e^{-t \Delta_n} u - \overline{u}  \rVert_{L^\infty(\X)}\leq  \exp\left( -\left(K_G - \frac{4\lVert D - \alpha_\M \rVert_{L^\infty(\X)} \mathrm{diam}(G) } {c_0 \psi(0)\veps^3}\right) t \right)  \mathrm{diam}(G) \mathrm{Lip}_G(u),
\end{equation*}
where $\overline{u}:=  \frac{1}{n}\sum_{x \in \X} u(x)$.
\end{corollary}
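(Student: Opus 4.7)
The plan is to deduce Corollary \ref{cor:LinftyDecay} directly from Theorem \ref{thm:LipschitzContraction} by combining two simple observations: the heat flow preserves the mean, and on a finite metric space any mean-zero function is controlled in sup norm by its Lipschitz seminorm times the diameter.

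\textbf{Step 1: Conservation of the mean.} First I would verify that $\sum_{x \in \X} e^{-t\Delta_n} u(x) = \sum_{x \in \X} u(x)$ for every $t \geq 0$. This follows from the fact that the weights $w_\veps$ are symmetric, so
\[ \sum_{x \in \X} \Delta_n v(x) = \frac{1}{n\veps^{m+2}} \sum_{x,\tilde x \in \X} w_\veps(x,\tilde x)(v(x) - v(\tilde x)) = 0 \]
by swapping indices $x \leftrightarrow \tilde x$. Consequently, $\tfrac{d}{dt}\sum_{x} e^{-t\Delta_n}u(x) = -\sum_x \Delta_n e^{-t\Delta_n}u(x) = 0$, so the mean $\overline{e^{-t\Delta_n}u}$ equals $\overline{u}$ for all $t$.

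\textbf{Step 2: Sup-norm bound via Lipschitz seminorm on a mean-zero function.} Setting $v_t := e^{-t\Delta_n}u - \overline{u}$, by Step 1 this function satisfies $\tfrac{1}{n}\sum_{y \in \X} v_t(y) = 0$. For any $x \in \X$,
\[ v_t(x) = \frac{1}{n}\sum_{y \in \X}\bigl(v_t(x) - v_t(y)\bigr), \]
so
\[ |v_t(x)| \leq \frac{1}{n}\sum_{y \in \X} |v_t(x) - v_t(y)| \leq \mathrm{Lip}_G(v_t)\cdot \mathrm{diam}(G). \]
Since adding the constant $\overline{u}$ does not affect the Lipschitz seminorm, $\mathrm{Lip}_G(v_t) = \mathrm{Lip}_G(e^{-t\Delta_n}u)$.

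\textbf{Step 3: Apply Theorem \ref{thm:LipschitzContraction}.} Combining the bound from Step 2 with the contraction estimate \eqref{eqn:LipschitzContraction} immediately yields
\[ \lVert e^{-t\Delta_n}u - \overline{u}\rVert_{L^\infty(\X)} \leq \mathrm{diam}(G)\cdot \mathrm{Lip}_G(e^{-t\Delta_n}u) \leq \exp\!\left(-\!\left(K_G - \tfrac{4\lVert D-\alpha_\M\rVert_{L^\infty(\X)}\mathrm{diam}(G)}{c_0\psi(0)\veps^3}\right)t\right)\mathrm{diam}(G)\,\mathrm{Lip}_G(u), \]
which is the claim. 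There is no real obstacle here; the result is essentially a packaging of Theorem \ref{thm:LipschitzContraction} together with the trivial Poincaré-type inequality for the uniform counting measure on $\X$, and the only thing to check carefully is the conservation of the mean, which is a direct consequence of the symmetry of $w_\veps$.
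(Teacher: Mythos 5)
Your proof is correct and follows essentially the same route as the paper: apply the elementary Poincaré-type bound $\lVert v - \overline{v} \rVert_{L^\infty(\X)} \leq \mathrm{diam}(G)\,\mathrm{Lip}_G(v)$ to $v = e^{-t\Delta_n}u$ and then invoke Theorem \ref{thm:LipschitzContraction}. The one thing you spell out that the paper leaves implicit is the conservation of the mean along the heat flow (needed to replace $\overline{e^{-t\Delta_n}u}$ by $\overline{u}$), which you correctly justify via the symmetry of $w_\veps$; this is a welcome clarification, since the paper simply uses this fact without proof inside the proof of Theorem \ref{thm:LipschitzContraction}.
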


\begin{proof}
 Notice that for any function $v : \X \rightarrow \R$ and any $x \in \X$ we have
 \[ |v(x) - \overline{v}| =\left | \frac{1}{n}\sum_{\tilde x \in \X} ( v(x) - v(\tilde x))  \right| \leq \text{diam}(G) \text{Lip}(v) , \]
from where it follows that
\[  \lVert v - \overline{v} \rVert_{L^\infty(\X)} \leq \text{diam}(G) \text{Lip}_G(v). \]
 The result now follows from Theorem \ref{thm:LipschitzContraction}.
\end{proof}

\nc

\subsection{Manifold Learning}
\label{sec:ManifoldLearning}

We briefly comment on another class of estimation problems on point clouds and graphs where curvature lower bounds may be utilized.

Recognizing and characterizing geometric structure in data is a cornerstone of Representation Learning. A common assumption is that the data lies on or near a low-dimensional manifold $\Mc \subseteq \R^d$ (\emph{manifold hypothesis}). Suppose we are given a point cloud $\Xc \subseteq \R^d$ in a high-dimensional Euclidean space, i.e., our data was sampled from an embedded manifold and we have access to pairwise Euclidean distances between data points. What can we learn about the dimension and curvature of $\Mc$ given only pairwise Euclidean distances in $\Xc$? A rich body of literature has considered this question from different angles. Several algorithms exist for inferring the \emph{intrinsic dimension} of $\Xc$ (i.e., $\dim(\Mc)$). However, such algorithms do not allow inference on intrinsic geometric quantities of $\M$ such as a global curvature bound. There are several approaches for approximating \emph{extrinsic} curvature, some of which are reviewed in Appendix \ref{sec:EstimatesSecondFundForm}. However, none of these techniques allow for learning the \emph{intrinsic} curvature of the manifold directly, i.e., without characterizing the full Riemannian curvature tensor.
The consistency results developed in this paper allow for such inference, even in the case where one has only access to data-driven estimates of pairwise geodesic distances, as is usually the case in practise.

\emph{Manifold learning} aims to identify a putative manifold $\widetilde{\Mc} \subseteq \R^d$ whose geometry agrees with the low-dimensional structure in $\Xc$. That is, one learns a point configuration $\phi(\Xc)$, which is the output of an implicit map $\phi: \Xc \rightarrow \widetilde{\Mc}$, that approximately preserves the pairwise distances ($d_\Xc(x,y) \approx d_{\widetilde{\Mc}}(\phi(x),\phi(y))$ for all $x,y \in \Xc$). A large number of algorithms has been proposed for this task, including Isomap~\citep{isomap}, Laplacian Eigenmaps~\citep{belkin2003laplacian} and Locally Linear Embeddings~\citep{lle}. While these algorithms have gained popularity in practice, it is often challenging to certify that the geometry of the putative manifold $\widetilde{\Mc}$ aligns with that of the true manifold $\Mc$. To the best of our knowledge, the strongest guarantees are available for Isomap, which is known to recover the intrinsic dimension, as well as, asymptotically, the pairwise distances, in the large-sample limit~\citep{bernstein2000graph}. However, none of these manifold learning algorithms are guaranteed to recover global curvature bounds. The consistency of global curvature bounds (Theorems~\ref{thm:GlobalBounds2}) provides an effective, unsupervised means for testing whether $\Mc$ has a curvature lower bound by computing Ollivier's Ricci curvature on a geometric graph constructed from $\Xc$. The resulting tool, complementary to standard manifold learning techniques, could allow for learning a more comprehensive geometric characterization of a given point cloud. Curvature lower bounds may also serve as inductive biases in manifold learning approaches. The choice of manifold learning technique often requires prior knowledge on the type of manifold that is to be learnt, e.g., if the data was sampled from a linear subspace, a linear method, such as Principal Component Analysis, is suitable. On the other hand, if the data is sampled from a nonlinear subspace, such as an embedded submanifold, a nonlinear approach, such as Isomap, is expected to perform better.

\section{Conclusions}
\label{sec:Conclusions}

In this paper, we have investigated continuum limits of Ollivier's Ricci curvature on random geometric graphs in the sense of local pointwise consistency and in the sense of global lower bounds. Specifically, we consider a data cloud $\Xc$ sampled uniformly from a low-dimensional manifold $\Mc \subseteq \R^d$. We construct a proximity graph $G$ of $\Xc$ that allows us to give non-asymptotic error bounds for for the approximation of $\M$'s curvature from data. Moreover, we show that if $\Mc$ has curvature bounded below by a positive constant, then so does $G$ with high probability. To the best of our knowledge, our local consistency result presents the first \emph{non-asymptotic} guarantees of this kind. In addition, we believe that our work provides the first consistency results for global curvature bounds. We complement our theoretical investigation of continuum limits with a discussion of potential applications to manifold learning.

We conclude with a brief discussion of avenues for future investigation. A limitation of the present work is the assumption that $\Xc$ is a uniform sample. Future work may investigate whether it is possible to adapt these results to other data distributions. Furthermore, we have assumed that the sample is noise-free; it would be interesting to analyze the noisy case with different noise models. In addition, one setting investigated in this work implicitly assumes access to a sufficiently good data-driven estimator for the geodesic distance. While we have suggested some directions for constructing such estimator, we believe that this question is of interest in its own right and deserves more attention. We would also like to highlight the ``shrinking" factor $s_K$ that appears in our main Theorems \ref{thm:GlobalBounds1} and \ref{thm:GlobalBounds2} should be removable with a much more detailed analysis.
Further applications of the global curvature lower bounds may arise in the study of Langevin dynamics on manifolds, specifically when utilizing graph-based constructions to define suitable discretizations of the infinitesimal generators of the stochastic dynamics of interest.

 \section*{Acknowledgements}
 The authors would like to thank Prasad Tetali for enlightening discussions and for providing relevant references. This work was started while the authors were visiting the Simons Institute to participate in the program ``Geometric Methods in Optimization and Sampling" during the Fall of 2021. The authors would like to thank the organizers of this program and the Simons Institute for support and hospitality. During the visit, MW was supported by a Simons-Berkeley Research Fellowship. NGT was supported by NSF-DMS grant 2005797 and would also like to thank the IFDS at UW-Madison and NSF through TRIPODS grant 2023239 for their support.

\bibliography{references}

\appendix

\section{Derivation of \eqref{eq:HigherOrderApprox}}
\label{sec:EuclideanVsGeodesic}
 

Let $\gamma: [0, \infty) \rightarrow \M \subseteq \R^d$ be a unit speed geodesic in $\M$. We will assume without the loss of generality that $x=\gamma(0) = 0$. 
At least for small enough time $t \leq t_0$, we have:
\[ d_\M(x, \gamma(t) )  = t .\]
We now compare the above with
\[ |x - \gamma(t)| = |\gamma(t)|,  \]
the Euclidean distance between $x$ and $\gamma(t)$. For that purpose we define the function 
\[ h(t) := t^2 - |\gamma(t)|^2, \quad t \in [0,t_0].  \]
A direct computation using the fact that $\langle \dot{\gamma}(t) , \dot{\gamma}(t) \rangle=1$ reveals the following expressions for the first four derivatives of $h$: 
\[ h'(t) = 2t - 2 \langle \dot{\gamma}(t),  \gamma(t) \rangle, \]
 \[ h''(t) = - 2 \langle \ddot{\gamma} (t) , \gamma(t) \rangle,  \]
 \[  h'''(t) = - 2 \langle \dddot{\gamma} (t) , \gamma(t) \rangle - 2 \langle \ddot{\gamma} (t) , \dot {\gamma}(t) \rangle = -  2 \langle \dddot{\gamma} (t) , \gamma(t) \rangle -  \frac{d}{dt} \langle \dot{\gamma} (t) , \dot {\gamma}(t) \rangle =  -  2 \langle \dddot{\gamma} (t) , \gamma(t) \rangle,   \]
 \[ h''''(t) = - 2 \langle \ddddot{\gamma} (t) , \gamma(t) \rangle - 2 \langle \dddot{\gamma} (t) , \dot {\gamma}(t) \rangle =  - 2 \langle \ddddot{\gamma} (t) , \gamma(t) \rangle + 2 \langle \ddot{\gamma} (t) , \ddot {\gamma}(t) \rangle.  \]
 In the above, the last expression for the fourth derivative follows from the following computation:
 \[ 0= \frac{d^2}{dt^2} \langle \dot{\gamma}(t) , \dot{\gamma}(t)\rangle = 2 \frac{d}{dt} \langle \ddot{\gamma}(t),  \dot{\gamma}(t) \rangle = 2 \langle \dddot{\gamma}(t), \dot{\gamma}(t) \rangle + 2 \langle \ddot{\gamma}(t), \ddot{\gamma}(t) \rangle. \]
 Now, at $t=0$ we have:
\[ h(0)=0, \quad h'(0)=0, \quad h''(0)=0, \quad h'''(0)=0, \quad h''''(0)= 2 \langle \ddot{\gamma}(0) , \ddot{\gamma}(0) \rangle   ,\]
since we have assumed $\gamma(0)=0$. A Taylor expansion then shows that
\[ h(t) = \frac{1}{12} \langle \ddot{\gamma}(0) , \ddot{\gamma}(0) \rangle t^4 + O(t^5).  \]
Hence
\begin{align*}
 t &= |\gamma(t)| \sqrt{ 1 + \frac{1}{12} \langle \ddot{\gamma}(0) , \ddot{\gamma}(0)\rangle \frac{ t^4}{|\gamma(t)|^2} + \frac{1}{|\gamma(t)|^2}O(t^5)   }
 \\ & = |\gamma(t)| \left(  1 + \frac{1}{24} \langle \ddot{\gamma}(0) , \ddot{\gamma}(0)\rangle \frac{ t^4}{|\gamma(t)|^2} + \frac{1}{|\gamma(t)|^2}O(t^5)      \right)
 \\&=  |\gamma(t)| + \frac{1}{24} \langle \ddot{\gamma}(0) , \ddot{\gamma}(0)\rangle t^3 + O(t^4).  
\end{align*}

 \section{Quantitative estimates of second fundamental form from data}
\label{sec:EstimatesSecondFundForm}
 
 
 Here we review some related literature on estimating the second fundamental form of a manifold $\M$ embedded in $\R^d$ with data. Kim et al.~\citep{kim_curvature-aware_2013} propose an estimator for the second fundamental form for embedded submanifolds, which is the class of manifolds considered in this paper. Specifically, they suggest to construct an estimator of the Hessian $\restr{H_h}{x}$ of the defining function $h$ at each point $x \in \Xc$ (recall Definition \ref{def:Embedded}). To do this, they fit a quadratic polynomial $p_h$ to the defining function in a small neighborhood of $x$ and assume $\restr{H_{p_h}}{x} \approx \restr{H_h}{x}$. They show that such an approximation convergences indeed asymptotically to the second fundamental form:
 \begin{theorem}[\citep{kim_curvature-aware_2013}]
     Let the coefficients $\tilde{A}_x$ of the polynomial $p_h$ be determined by solving the weighted least-squares problem
     \begin{equation*}
         \tilde{A}_x = {\rm argmin}_{Q} \| K_x(XQ-h) \| \approx A_x \; ,
     \end{equation*}
     where $X$ is the matrix of second-order monomials of points in $\Xc$ centered at $x$, 
     \begin{align*}
         A_x &=\frac{1}{2} \Big[ \big[\restr{H_h}{x}\big]_{1,1},
         \big[\restr{H_h}{x}\big]_{1,2},\dots,
         \big[\restr{H_h}{x}\big]_{d,d}         \Big]^T \\
         \tilde{A}_x &=\frac{1}{2}\Big[ \big[\restr{H_{p_h}}{x}\big]_{1,1},
         \big[\restr{H_{p_h}}{x}\big]_{1,2},\dots,
         \big[\restr{H_{p_h}}{x}\big]_{d,d}         \Big]^T
         \; ,
     \end{align*}
     and $K_x$ a diagonal matrix with ${\rm diag}(K_x)=\mathds{1}_{\| x_i - x \| \leq \veps}$. Then $\| A_x - \tilde{A}_x \| \rightarrow 0$ for all $x \in \Xc$ as $n \rightarrow \infty, \veps \rightarrow 0$.
 \end{theorem}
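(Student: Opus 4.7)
The plan is to reduce the consistency claim to a standard local-polynomial-regression argument built on three ingredients: a Taylor expansion of $h$ near $x$, concentration of the empirical design matrix around its population version, and non-degeneracy of that population matrix after an appropriate $\veps$-rescaling.

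First, using the smoothness of $h$, one writes for $y \in B(x,\veps)$
\begin{equation*}
h(y) = h(x) + \nabla h(x)^T(y-x) + \tfrac{1}{2}(y-x)^T \restr{H_h}{x}(y-x) + r(y), \qquad |r(y)| \le C\|y-x\|^3.
\end{equation*}
Enlarging the basis in $X$ so that it spans all monomials of $y-x$ up to degree two, the true Taylor-coefficient vector $A_x^{\mathrm{full}}$ contains $A_x$ in its second-order block. Since the 0/1 weight matrix satisfies $K_x^2 = K_x$, the weighted least-squares estimator solves $X^T K_x X\, \tilde A_x^{\mathrm{full}} = X^T K_x h_{\mathrm{samp}}$ with $h_{\mathrm{samp}} = (h(x_i))_i$, and direct substitution of the expansion yields
\begin{equation*}
\tilde A_x^{\mathrm{full}} - A_x^{\mathrm{full}} = (X^T K_x X)^{-1} X^T K_x \rho, \qquad \rho_i := r(x_i).
\end{equation*}
Because $|r(x_i)| \le C\veps^3$ on $\supp(K_x)$, the second-order block of this error will be $O(\veps)$ as soon as one has a good operator-norm bound on $(X^T K_x X)^{-1}$ after diagonally rescaling rows and columns by powers of $\veps$ matching each monomial's degree.

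To produce that spectral bound, let $N_\veps$ be the number of samples in $B(x,\veps)$ and set $\Sigma_{n,\veps} := N_\veps^{-1}X^T K_x X$. A Hoeffding-type argument gives $\Sigma_{n,\veps} \to \Sigma_\veps$ whenever $N_\veps \to \infty$, which, by standard binomial concentration for the uniform sampling on $\Mc$, is guaranteed as long as $n \veps^m \to \infty$. For the population Gram matrix $\Sigma_\veps$, rescaling coordinates by $\veps$ converts it, in the limit $\veps \to 0$, into a block-diagonal matrix whose diagonal blocks are the moment matrices of the constant, linear, and quadratic monomials against the uniform measure on the $m$-dimensional Euclidean unit ball in $T_x\Mc$. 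These limiting blocks are strictly positive definite, which delivers the required operator-norm bound.

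The main obstacle will be the passage from an ambient-space ball $B(x,\veps) \subseteq \R^d$ to the tangent-space ball in $T_x\Mc$ on which the limiting spectral calculation just described can actually be carried out. This requires the local graph representation of $\Mc$ around $x$, which endows $B(x,\veps) \cap \Mc$ with coordinates in $T_x\Mc$ whose Jacobian deviates from the identity only by an $O(\veps^2)$ factor, so it cannot affect the leading-order spectrum of $\Sigma_\veps$. Once this change of variables is made explicit and the spectral lower bound on $\Sigma_\veps$ is established, combining it with the remainder bound $|r(x_i)| \le C\veps^3$ and the empirical concentration $\Sigma_{n,\veps} \to \Sigma_\veps$ yields $\|A_x - \tilde A_x\| \to 0$ as $n \to \infty$ and $\veps \to 0$ along any sequence with $n\veps^m \to \infty$.
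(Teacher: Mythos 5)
The paper does not actually prove this theorem: it is cited directly from Kim et al.~(2013), and immediately after the theorem statement the paper defers to ``Appendix, sections 1--2'' of that reference for the proof. So there is no ``paper's own proof'' to compare your sketch against; I can only assess your argument on its merits and note that what you wrote is the standard local-polynomial-regression route, which is in the spirit of the cited source.

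Your sketch is broadly sound, but two points deserve care. First, the claim that the rescaled population Gram matrix becomes \emph{block-diagonal} across the degree-$0$, degree-$1$, degree-$2$ blocks is not correct: parity kills only the odd-even cross-terms (constant--linear, linear--quadratic), while the constant--quadratic block is nonzero (e.g.\ $\int_{B(0,1)} z_1^2\,dz \neq 0$). This does not break the argument, because the full Gram matrix is still positive definite by linear independence of the monomials on a set of positive measure, but the block-diagonality claim as written is false and should be replaced by a direct positive-definiteness argument. Second, and more substantively, you ``enlarge the basis in $X$'' to include monomials of all degrees up to two, while the theorem as stated says $X$ consists of ``second-order monomials'' and the coefficient vectors $A_x,\tilde A_x$ contain only Hessian entries. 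If the actual estimator regresses on quadratic monomials only, your argument no longer literally applies: on the manifold, $\nabla h(x)^\top (y-x)$ is generically of the \emph{same} order $O(\veps^2)$ as the quadratic term (because the normal component of $y - x$ is $O(\veps^2)$), so an omitted linear term would contaminate the quadratic coefficients at leading order. Whether this is a genuine gap depends on the precise normalization in Kim et al.\ (in particular whether $h$ and $X$ are expressed in tangential graph coordinates at $x$, in which case the constant and linear terms vanish by construction and only degree-$2$ monomials are needed). You should make this coordinate choice explicit rather than silently modifying the estimator, since the consistency of the scheme hinges on it.
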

 \noindent A proof can be found in~\citep[Appendix, sections 1-2]{kim_curvature-aware_2013}.\\
 
 While this result holds for any manifold considered in this paper, it provides only \emph{asymptotic} guarantees. 
 Aamari and Levrard~\citep{aamari_nonasymptotic_2019} show that under additional smoothness assumptions on the underlying manifold, one can indeed obtain non-asymptotic error estimates. They give minimax bounds of order $O \left( n^{\frac{2-k}{m}}\right)$~\citep[Theorems 4 and 5]{aamari_nonasymptotic_2019}
 for an estimator of the second fundamental form of a $C^k$-smooth embedded submanifold. 
Below, we briefly recall the minimax bounds for later reference. To state the results, we introduce the following additional notation. We define, as usual, the \emph{operator norm} of a linear map $T$ on $S \subset \mathbb{R}^d$ as $\| T \|_{op}:= \sup_{z \in S} \frac{\| Tz \|}{\| z \|}$. Let $\Mc$ be a $C^k$-manifold with $k \geq 3$ and reach $\tau \geq \tau_{\min} >0$. For $x \in \Xc$, one can define the local estimator
 \begin{align}\label{eq:poly-estimate}
  (\hat{\pi}_j,\hat{T}_{2,j}, \dots,\hat{T}_{k-1,j}) \in \operatornamewithlimits{\argmin}_{\pi, \operatornamewithlimits{\sup}_{2<i<k} \| T_i \|_{op}\leq 1} P_{n-1}^{(j)} \Biggl[ 
    \Big\| x - \pi(x) - \sum_{i=2}^{k-1} T_i (\pi(x)^{\otimes i}) \Big\|^2 \mathds{1}_{B(0,h)}(x)
  \Biggr] \; ,
 \end{align}
 where $\pi$ is an orthogonal projection onto $d$-dimensional subspaces and $T_i: (\mathbb{R}^m)^i \rightarrow \mathbb{R}^m$ a bounded symmetric tensor of order $i$. Moreover, $P_{n-1}^{(j)}$ denotes integration with respect to $\frac{1}{(n-1)} \sum_{i \neq j} \delta_{(x_i - x_j)}$, $z^{\otimes i}$ the $(m \times i)$-dimensional vector $(z, \dots, z)$ and $h\leq \frac{\tau_{\text{min}}}{8}$.  Aamari and Levrard give the following guarantee for a solution of Eq.~\ref{eq:poly-estimate} (adapted to our notation and assumptions):
 \begin{theorem}[\citep{aamari_nonasymptotic_2019}]\label{thm:amari}
Let $\Mc$ be a $C^k$-manifold with $k \geq 3$ and reach $\tau \geq \tau_{\min} >0$. For sufficiently large $n$, we have the following bounds:
     \begin{enumerate}
         \item Upper bound:
    \begin{equation}
        \mathbb{E} \Big[  \max_{1 \leq j \leq n} \| \mathrm{I\!I}_{x_jy} \circ \pi_{T_{x_j}\Mc}  - \hat{T}_{2,j} \circ \hat{\pi}_j  \|_{op}\Big] \leq C \left( \frac{\log n}{n-1} \right)^{\frac{k-2}{d}}
    \end{equation}
         \item Lower bound:
    \begin{equation}
       \inf_{1 \leq j \leq n} \; \mathbb{E} \Big[ \| \mathrm{I\!I}_{x_jy} \circ \pi_{T_{x_j}\Mc}  - \hat{T}_{2,j} \circ \hat{\pi}_j  \|_{op}\Big] \geq C' \left( \frac{1}{n-1}\right)^{\frac{k-2}{d}}
    \end{equation}
     \end{enumerate}
     Here, $C,C'$ are constants depending on $d,k,\tau_{\min}$; $n$ is assumed to be sufficiently large, such that $C^{-1} \geq \big( \sup_{2 \leq i \leq k}  \| T_i^* \|_{op}  \big)^{-1}$ for the estimator.
 \end{theorem}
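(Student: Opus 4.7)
The plan is to recast \eqref{eq:poly-estimate} as a weighted polynomial regression in a local chart around each $x_j$, then to control the error by a standard bias--variance decomposition with bandwidth $h$, and finally to obtain the matching minimax lower bound via Le Cam's two-point method. The main obstacle I anticipate is the variance step: one needs a sharp lower bound on the smallest eigenvalue of the empirical Gram matrix of monomials of degree $\leq k-1$ restricted to a random sample in $B(x_j,h)\cap\Mc$, uniformly over $j$, since this eigenvalue is what dictates the final exponent.

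\emph{Local normal form.} The reach condition $\tau\geq\tau_{\min}$ guarantees that, for each $x_j\in\Xc$, $\Mc\cap B(x_j,\tau_{\min}/4)$ is the graph of a $C^k$ map $N_j:T_{x_j}\Mc\to (T_{x_j}\Mc)^\perp$ with $N_j(0)=0$ and $DN_j(0)=0$. Taylor expanding at the origin produces oracle tensors $T^\star_{2,j},\dots,T^\star_{k-1,j}$ (with $T^\star_{2,j}$ equal to a normalized version of $\mathrm{I\!I}_{x_j}$) such that $\|N_j(v)-\sum_{i=2}^{k-1}T^\star_{i,j}(v^{\otimes i})\|\leq C\|v\|^k$, uniformly in $j$ and $\|v\|\leq h$. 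The pair $(\pi_{T_{x_j}\Mc},\,T^\star_{2,j},\dots,T^\star_{k-1,j})$ is the natural oracle target of \eqref{eq:poly-estimate}.

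\emph{Upper bound.} Restricted to samples in $B(x_j,h)$ with $h\leq\tau_{\min}/8$, the oracle attains population risk $O(h^{2k})$ by the Taylor remainder above. The least-squares objective is quadratic in the unknown projection and tensors, and after a $h$-rescaling its empirical Hessian is controlled from below by the Gram matrix of monomials of degree $\leq k-1$ on $B(0,h)\subset T_{x_j}\Mc$, whose smallest eigenvalue with respect to $\mu$ is of order $h^{2(k-1)}$. A matrix Bernstein inequality promotes this bound to the empirical measure $P^{(j)}_{n-1}$ with probability at least $1-n^{-s}$ provided $nh^m\gtrsim\log n$. Comparing the empirical and oracle minimizers and reading off the order-$2$ component yields the pointwise estimate $\|\mathrm{I\!I}_{x_j}\circ\pi_{T_{x_j}\Mc}-\hat T_{2,j}\circ\hat\pi_j\|_{op}\lesssim h^{k-2}$. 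Choosing $h\sim(\log n/n)^{1/m}$ balances bias and variance, and a union bound over $j\in\{1,\dots,n\}$ absorbs the extra logarithmic factor, delivering the claimed rate in expectation after controlling the rare failure event.

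\emph{Lower bound.} For the matching lower bound I would apply Le Cam's two-hypothesis method. Take $\Mc_0$ to be (a portion of) an affine $m$-plane in $\R^d$ and $\Mc_1$ to be the graph of $\phi_r(z)=c\,r^k\,\phi(z/r)$ over $\Mc_0$, where $\phi$ is a fixed nonzero $C^k$ bump supported in the unit ball and the constants are tuned so that $\Mc_1$ still lies in the admissible class (reach $\geq\tau_{\min}$, $C^k$ norm controlled). A direct computation gives $\|\mathrm{I\!I}_{\Mc_0}-\mathrm{I\!I}_{\Mc_1}\|_{op}\gtrsim r^{k-2}$ at the center of the bump. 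Setting $r\sim n^{-1/m}$ ensures that the uniform laws on $\Mc_0$ and $\Mc_1$ differ only on a set of $\mu$-mass $\lesssim 1/n$, so the total variation between the $n$-fold product measures is bounded strictly below $1$; the standard two-point inequality then produces the desired lower bound at the rate $n^{-(k-2)/m}$. The delicate point here, as in the upper bound, is checking that the perturbation preserves the class constraints while remaining large enough in operator norm, which is precisely what forces the polynomial scaling $\varepsilon\sim r^k$ and hence the exponent $(k-2)/m$.
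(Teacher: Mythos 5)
This statement is a cited result (Theorems~4 and~5 in Aamari--Levrard 2019); the paper does not supply a proof, only a restatement of the bounds for later reference. Your sketch is therefore not being measured against a paper-internal argument, but it does track the general minimax strategy of the source: local normal form via the reach bound, bias control from the $C^k$ Taylor remainder, and a two-hypothesis lower bound built from a compactly supported bump at scale $r\sim n^{-1/m}$. That much is the right shape.

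There is one genuine gap in your upper-bound step. You bound the error by pairing the $O(h^{2k})$ oracle residual against the \emph{smallest} eigenvalue of the monomial Gram matrix, which you correctly place at order $h^{2(k-1)}$. That pairing only yields a bound of order $\sqrt{h^{2k}/h^{2(k-1)}}=h$ on the full coefficient vector, not the claimed $h^{k-2}$ for the degree-two block. The smallest eigenvalue is dominated by the top-degree block, which is irrelevant to $T_2$. To recover $h^{k-2}$ you must exploit the graded structure of the design after the $h$-rescaling: working in an orthogonalized (e.g.\ Legendre-type) basis, the degree-$i$ block has Gram eigenvalues $\asymp h^{2i}$ and the remainder projects onto it at rate $h^{k+i}$, giving a degree-$i$ coefficient error $\asymp h^{k-i}$, hence $h^{k-2}$ for $i=2$. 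The matrix-Bernstein step you describe needs to be applied blockwise after this rescaling, not to the raw Gram matrix. Two smaller points: the theorem as transcribed uses $d$ for the intrinsic dimension (inheriting Aamari--Levrard's notation), whereas your computations use $m$; and in the Le~Cam construction the perturbed uniform law does not agree exactly with the unperturbed one off the bump's footprint (the surface measure changes by a multiplicative factor), so the ``mass $\lesssim 1/n$'' claim should be replaced by a $\mathrm{TV}$ estimate of that order, which still closes the argument.
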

The geometric optimization problem Eq.~\eqref{eq:poly-estimate} can be viewed as an optimization task on the Grassmannian manifold~\citep{usevich}. 
While it may seem challenging to solve at first, it was shown in~\citep{divol} that it has a locally geodesically convex formulation, which can be efficiently solved via standard first-order solvers, such as Riemannian gradient descent. 
We further note that similar estimation results were also obtained in related work by Cao et al.~\citep{cao2021efficient}.

Notice that Theorem~\ref{thm:amari} provides \emph{error bounds} in expectation, which do not immediately translate into concentration bounds. While a development of such concentration bounds is beyond the scope of the present paper, we briefly comment on a possible avenue for deriving them, at least in the large-sample regime. Specifically, given a sufficiently large sample, one may construct approximations of tangent spaces via principal component analysis. Developing a means to track the change in the tangent space as we move along the manifold could deliver an approximation of the second fundamental form, which, in turn, would allow for deriving concentration bounds.


\end{document}